\newtheorem{theorem}{Theorem}[section]
\newtheorem{remark}{Remark}[section]
\newtheorem{lemma}[theorem]{Lemma}
\newtheorem{definition}{Definition}[section]
\newtheorem{proposition}[theorem]{Proposition}
\numberwithin{equation}{section}
\begin{document}

\title[the compressible Euler equations with a time periodic outer force]
	{Existence of a time periodic solution for the compressible Euler equation with a time periodic outer force in a bounded interval}
\author{Naoki Tsuge}
\address{Department of Mathematics Education, 
Faculty of Education, Gifu University, 1-1 Yanagido, Gifu
Gifu 501-1193 Japan.}
\email{tuge@gifu-u.ac.jp}
\thanks{
N. Tsuge's research is partially supported by Grant-in-Aid for Scientific 
Research (C) 17K05315, Japan.
}
\keywords{The Compressible Euler Equation, a time periodic outer force, the compensated compactness, a time periodic solution, the modified Lax Friedrichs scheme, the fixed point theorem, decay estimates.}
\subjclass{Primary 
35L03, 
35L65, 
35Q31, 
76N10,
76N15; 
Secondary
35A01, 
35B35,   
35B50, 
35L60,   
76H05,   
76M20.   
}
\date{}

\maketitle

\begin{abstract}In the field of differential equations, particularly fluid dynamics, many researchers have shown an interest in the behavior of
time periodic solutions. In this paper, we study isentropic gas flow in a bounded interval and apply a time periodic outer force. This motion is described by the compressible Euler equation with the outer force. Our purpose in this paper is to prove the existence of a time periodic solution. Unfortunately, little is known for the system of conservation laws until now. 
The problem seems to lie in fact that the equation does not possesses appropriate decay estimates.

When we prove the existence of the time periodic solution, we are faced with two difficult problems. 
One problem is to prove that initial data and the corresponding solutions 
at the time period 
are contained in the same bounded set. To overcome this, we employ an invariant region deduced from 
the mass and energy. This enable us to investigate the 
behavior of solutions in detail. In addition, this method provide us a  decay estimate to suppresses 
the growth of solutions caused by the outer force and discontinuities. 
Moreover, there is a possibility that this estimate will lead us to the asymptotic stability for large data in the future. 
Second problem is to construct a continuous map from
initial data to the corresponding solutions at the time period.
We need the map to apply a fixed point theorem. 
To construct this, we introduce 
a new type Lax-Friedrichs scheme, which has a recurrence relation consisting of 
discretized approximate solutions. In virtue of the fixed point theorem, we can prove a existence of a fixed point, which represents a time periodic solution.
Furthermore, the ideas and techniques developed in this paper will be applicable to not only conservation laws but also other nonlinear problems involving similar difficulties such as nonlinear wave equations, the numerical analysis.

Finally, we use the compensated compactness framework to prove the convergence of our approximate solutions. 
\end{abstract}


\section{Introduction}
There has been a great discussion about time periodic solutions in fluid dynamics. 
However, the compressible Euler equation has been little investigated.
The present paper is thus concerned with isentropic gas dynamics
with an outer force. 
\begin{align}
\begin{cases}
\displaystyle{\rho_t+m_x=0,}\\
\displaystyle{m_t+\left(\frac{m^2}{\rho}+p(\rho)\right)_x
	=F(x,t)\rho,}
\end{cases}x\in(0,1),\quad t\in(0,1)
\label{force}
\end{align}
where $\rho$, $m$ and $p$ are the density, the momentum and the 
pressure of the gas, respectively. If $\rho>0$, 
$v=m/\rho$ represents the velocity of the gas. For a barotropic gas, 
$p(\rho)=\rho^\gamma/\gamma$, where $\gamma\in(1,5/3]$ is the 
adiabatic exponent for usual gases. The given function 
$F\in C^1([0,1]\times[0,1])$ represents a time periodic outer force with the time period $1$, 
i.e., $F(x,0)=F(x,1),\;F_t(x,0)=F_t(x,1)$.

We consider the initial boundary value problem (\ref{force}) 
with the initial and boundary data
\begin{align}  
(\rho,m)|_{t=0}=(\rho_0(x),m_0(x))\quad m|_{x=0}=m|_{x=1}=0.
\label{I.C.}
\end{align}
The above problem \eqref{force}--\eqref{I.C.} can be written in the following form 
\begin{align}\left\{\begin{array}{lll}
u_t+f(u)_x=g(x,t,u),\quad{x}\in(0,1),\quad t\in(0,1),\\
u|_{t=0}=u_0(x),\\
m|_{x=0}=m|_{x=1}=0
\label{IP}
\end{array}\right.
\end{align}
by using  $u={}^t(\rho,m)$, $\displaystyle f(u)={}^t\!\left(m, \frac{m^2}{\rho}+p(\rho)\right)$ and 
$\displaystyle{g(x,t,u)={}^t\!\left(0,F(x,t)\rho\right)}$. 

Let us survey the related mathematical results.
Time periodic solutions are widely studied for other differential equations.  For example, Matsumura and Nishida \cite{MN} investigates those of the compressible Navier Stokes equation. 
On the other hand, as far as conservation laws concerned, it has not been received much attention until now. Takeno \cite{Takeno} studies a single conservation law and proved the existence of a time periodic solution for the space periodic boundary condition. 
The key tool is the decay estimate in Tadmor \cite{Tadmor}. 
It should be noted that we cannot apply the method of \cite{Tadmor} to systems. Greenberg and Rascle \cite{GM} treat with an artificial system of conservation laws by 
the Glimm scheme. Although the existence theorem for isentropic gas dynamics is recently obtained in Tsuge \cite{T7}, the initial and boundary conditions are restrictive.

Our goal in this paper is to prove the existence of a time periodic 
solution under a general case. We are then faced with two difficult problems.  
\begin{itemize}
	\item[(P1)] One is to prove that initial data and the corresponding solutions at a period are contained in the same bounded set.
	\item[(P2)] Second is to construct a continuous map in a finite dimension.
\end{itemize}

To overcome (P1), we need an invariant region. 
\cite{T1}--\cite{T8} develop invariant regions with known functions as 
their lower and upper bounds. However, we cannot 
apply their method to the present problem (see Remark \ref{rem:boundary}). To solve this, we employ an invariant region including 
unknown functions such as the mass and energy.  
In addition, 
this method enables us to deduce a decay estimate (see \eqref{estimate1}--\eqref{estimate2}). 
Owing to this estimate, we can control the growth of solutions caused by the outer force and discontinuities. Furthermore, it has the advantage that it is applicable for large data. 
Therefore, this estimate is expected to be used for the analysis of its 
asymptotic stability in the future.

We next consider (P2). To prove the existence of a time periodic solution, we apply the Brouwer fixed point theorem to the continuous map from
initial data to solutions at one period. To construct this map, we introduce 
a new type Lax-Friedrichs scheme, which has a recurrence relation consisting of discretized approximate solutions. The formula yields 
the continuous map in a finite dimension. In addition, the approximate solutions are different from 
those of \cite{T1}--\cite{T8}. Since the approximate solutions consist of 
unknown functions, we must apply the iteration method for their construction in each cell.

\begin{remark}
If we employ the Glimm scheme, we can obtain the decay of the total variation of solutions, which may solves (P1). However, the random choice method of the scheme prevents us from constructing the continuous map in (P2). In addition, the scheme cannot treat with large data.
\end{remark}

To state our main theorem, we define the Riemann invariants $w,z$, which play important roles
in this paper, as
\begin{definition}
	\begin{align*}
	w:=\frac{m}{\rho}+\frac{\rho^{\theta}}{\theta}=v+\frac{\rho^{\theta}}{\theta},
	\quad{z}:=\frac{m}{\rho}-\frac{\rho^{\theta}}{\theta}
	=v-\frac{\rho^{\theta}}{\theta}\quad
	\left(\theta=\frac{\gamma-1}{2}\right).
	\end{align*}
\end{definition}
These Riemann invariants satisfy the following.
\begin{remark}\label{rem:Riemann-invariant}
	\normalfont
	\begin{align*}
	&|w|\geqq|z|,\;w\geqq0,\;\mbox{\rm when}\;v\geqq0.\quad
	|w|\leqq|z|,\;z\leqq0,\;\mbox{\rm when}\;v\leqq0.
    \\
	&v=\frac{w+z}2,
	\;\rho=\left(\frac{\theta(w-z)}2\right)^{1/\theta},\;m=\rho v.
	\end{align*}From the above, the lower bound of $z$ and the upper bound of $w$ yield the bound of $\rho$ and $|v|$.
\end{remark}

Moreover, we define the entropy weak solution and an time periodic entropy weak solution.
\begin{definition}
		A measurable function $u(x,t)$ is called an {\it entropy weak solution} of the initial boundary value problem \eqref{IP} if 
	\begin{align*}
		&\int^{1}_{0}\int^{1}_0u\varphi_t+f(u)\varphi_x+g(x,t,u)\varphi dxdt+\int^{1}_{0}
		u_0(x)\varphi(x,0)dx=0
	\end{align*}
	holds for any test function $\varphi\in C^1([0,1]\times[0,1))$ and 
	\begin{align}
		&\int^{1}_0\int^{1}_0\hspace{-1ex}\eta(u)\psi_t+q(u)\psi_x+\nabla\eta(u) g(x,t,u)\psi dxdt\geqq0
		\label{entropy solution}
	\end{align}
	holds for any non-negative test function $\psi\in C^1((0,1)\times(0,1))$, where 
	$(\eta,q)$ is a pair of convex entropy--entropy flux of \eqref{force}.
\end{definition}	
\begin{definition}

	A measurable function $u(x,t)$ is called an time periodic {\it entropy weak solution} of the initial boundary value problem \eqref{IP} with the period $1$ if 
	there exists a bounded measurable function $u^{\ast}_0(x)	=(\rho^{\ast}_0(x),m^{\ast}_0(x))
	$ such that
\begin{align*}
&\int^{1}_{0}\int^{1}_0u\varphi_t+f(u)\varphi_x+g(x,t,u)\varphi dxdt+\int^{1}_{0}
u^{\ast}_0(x)\left
(\varphi(x,0)-\varphi(x,1)\right)dx=0
\end{align*}
holds for any test function $\varphi\in C^1([0,1]\times[0,1])$ and \eqref{entropy solution}.
\end{definition}

We set 
$\displaystyle \bar{\rho}=\int^1_0\rho_0(x)dx
,\;\bar{\eta}_{\ast}=\int^1_0\eta_{\ast}(u_0(x))dx
$, where $\displaystyle 
		\eta_{\ast}(u)=\frac12\frac{m^2}{\rho}+\frac1{\gamma(\gamma-1)}\rho^{\gamma}.$
Since $\displaystyle \bar{\rho}=0$ implies that the solution becomes vacuum, we assume $\displaystyle \bar{\rho}>0$.

Our main theorems are as follows.
\begin{theorem}\label{thm:main}
For any positive constants $\bar{\rho},\bar{\eta}_{\ast},\varepsilon$ satisfying $0<\varepsilon<\frac{2(\gamma-1)}{\gamma+1}$, 
there exist positive constants $M,\kappa,\alpha$ and a positive function 
$M(t)$ such that the following (A) and (B) hold.

(A) $M(0)=M(1)=M$;

(B) If 
\begin{align}
\Vert F\Vert_{L^{\infty}([0,1]\times[0,1])}\leq\kappa
\label{condition2}
\end{align}
and $u_0\in{L}^{\infty}([0,1])$ satisfy
	\begin{equation}
		\begin{split}
			&\rho_0(x)\geq0,\quad -M+\int^x_{0}\zeta(u_0(y))dy
			\leq z(u_0(x)),\\
			&w(u_0(x))\leq M+\int^x_{0}\zeta(u_0(y))dy,
		\end{split}
		\label{initial}
	\end{equation}

		then, there exists a solution $u(x,t)$ of the initial boundary problem \eqref{IP} such that, for $(x,t)\in[0,1]\times[0,1]$, 
\begin{equation}
		\begin{alignedat}{2}
			&\rho(x,t)\geq&&0,\quad  -M(t)+\int^x_{0}\zeta(u(y,t))dy\leq z(x,t),\\
			&w(x,t)\leq&& M(t)+\int^x_{0}\zeta(u(y,t))dy,
		\end{alignedat}
		\label{inequaltiy}
	\end{equation}where \begin{align}
	\begin{alignedat}{2}
		K=\alpha\bar{\rho}-\int^1_0\eta_{\ast}(u_0(x))dx-1=M^{\frac{2(\gamma-1)}{\gamma+1}-\varepsilon}
	\end{alignedat}
	\label{condition1}
\end{align}
and 
\begin{align}
\zeta(u)=\eta_{\ast}(u)-\alpha\rho+K.
\label{zeta}
\end{align}
\end{theorem}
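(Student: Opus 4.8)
\emph{Strategy.} The plan is to obtain $u$ as a compensated-compactness limit of modified Lax--Friedrichs approximations that are forced to stay in the moving region of \eqref{inequaltiy}, whose radius $M(t)$ obeys a scalar decay estimate arranged so that the growth accumulated over one period cancels; this will give (A), while the limit gives (B). Concretely: given $\bar{\rho},\bar{\eta}_{\ast},\varepsilon$, take $M$ large (the lower bound on $M$ is fixed only at the end of the argument), set $K=M^{\frac{2(\gamma-1)}{\gamma+1}-\varepsilon}$ and $\alpha=(\bar{\eta}_{\ast}+1+K)/\bar{\rho}$, so that \eqref{condition1} holds with $\alpha,K>0$ and $\zeta$ is given by \eqref{zeta}, and choose $\kappa=\kappa(M)>0$ small. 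Define $M(t)$ on $[0,1]$ as the solution of a scalar ODE of the form $M'(t)=-\mu\,\Psi(M(t))+h(t)$ with $M(0)=M$, where the restoring term $\mu\Psi(M)$ reflects the geometry of the region below and $h$, whose size is controlled by $\kappa$ together with the $L^{\infty}$-bound the region itself puts on $m$, collects the source- and dissipation-driven growth; tuning $M,\alpha,\kappa$ then yields $M(\cdot)>0$ on $[0,1]$ with $M(1)=M$, which is (A). Note that, by Remark \ref{rem:Riemann-invariant}, membership of $u(\cdot,t)$ in the region of \eqref{inequaltiy} bounds $\rho$ and $|v|$, hence $u$, uniformly on $[0,1]\times[0,1]$.

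\emph{The scheme.} Partition $[0,1]$ into cells of width $\Delta x$, take a time step $\Delta t$ satisfying the CFL condition, and define $u^{\Delta}$ by a Lax--Friedrichs-type average whose cell flux and value are corrected using the running sum $\sum_{x_k\le x_j}\zeta(u_k^{n})\Delta x$ and $M(t_{n})$, so that the discrete analogue of \eqref{inequaltiy} is built into the update. Because $\zeta$ contains $\eta_{\ast}(u)$, and because the correction at level $n+1$ refers to quantities on the new level (the discrete energy $\sum_k\eta_{\ast}(u_k^{n+1})\Delta x$ and the running sum of $\zeta$), the update is implicit; I would solve it cell by cell by a monotone/contraction iteration, shown to converge and to be consistent, so that $u^{\Delta}$ is well defined. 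This also makes the grid-to-grid map continuous in the data, which is the ingredient the fixed-point argument needs elsewhere, but is not otherwise used for the present statement.

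\emph{The invariant region.} One proves by induction on the time level that $u^{\Delta}(\cdot,t_{n})$ lies in the discrete region $\{\rho\ge0,\ -M(t_{n})+\sum\zeta\Delta x\le z\le w\le M(t_{n})+\sum\zeta\Delta x\}$. Two couplings drive the induction. First, a discrete energy inequality $E(t_{n+1})\le E(t_{n})+C\kappa\,\Delta t\,\|m^{\Delta}\|_{\infty}$ up to a nonpositive numerical-dissipation term, where $E(t_{n})=\sum_k\eta_{\ast}(u_k^{n})\Delta x$: since the region at level $n$ already bounds $\|m^{\Delta}\|_{\infty}$ in terms of $M$, this keeps $E(t)$ in a narrow band about $\bar{\eta}_{\ast}$, keeping $\sum\zeta\Delta x$ and the data of the $M(\cdot)$-ODE on track. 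Second, a boundary analysis: on the part of the discrete boundary where $w(u_j^{n})$ attains its bound, the Lax--Friedrichs average of the two adjacent Riemann solutions, including the source increment $\sim F\rho\,\Delta t$ and the slope mismatch $\sim\zeta\,\Delta x$, lands strictly inside the region at level $n+1$, the inward push coming from numerical dissipation and from the designed decrease of $M(\cdot)$; the $z$-estimate is symmetric. The balance closes precisely because of the scaling \eqref{condition1} — as $M\to\infty$ the region radius is of order $M$ while $\alpha$ is only of order $M^{\frac{2(\gamma-1)}{\gamma+1}-\varepsilon}/\bar{\rho}$, so the destabilising contributions are of lower order — and because $\|F\|_{\infty}\le\kappa$ is small; this is what fixes how large $M$ and how small $\kappa$ must be taken. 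In particular $u^{\Delta}$ is uniformly bounded in $L^{\infty}([0,1]\times[0,1])$.

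\emph{Passage to the limit and the main obstacle.} With the uniform $L^{\infty}$ bound in hand, the weak entropy dissipation measures of the Lax--Friedrichs approximations are compact in $H^{-1}_{\mathrm{loc}}$, so the compensated-compactness reduction for isentropic gas dynamics with $\gamma\in(1,5/3]$ applies and yields a subsequence $u^{\Delta}\to u$ a.e.; $u$ is an entropy weak solution of \eqref{IP} in the sense of \eqref{entropy solution}, and letting $\Delta x\to0$ in the discrete region inequalities gives \eqref{inequaltiy} for $u$, which is (B). The step I expect to be the main obstacle is the boundary part of the invariant-region induction: one must reconcile, in a single estimate, the source term, the solution-dependent curved boundary — whose slope $\zeta$ and radius $M(t)$ are themselves built from the (nearly conserved) energy and the conserved mass — and the numerical viscosity, closing the estimate by exactly the exponent in \eqref{condition1}; a secondary difficulty is the solvability of the implicit cell-by-cell iteration used to define the scheme.
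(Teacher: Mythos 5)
Your overall architecture (a modified Lax--Friedrichs scheme with the moving region built in, a decaying radius, smallness of $\kappa$, and a compensated-compactness limit) is the same as the paper's, and your choices $K=M^{\frac{2(\gamma-1)}{\gamma+1}-\varepsilon}$, $\alpha=(\bar{\eta}_{\ast}+1+K)/\bar{\rho}$ match \eqref{condition1}. The genuine gap is in the step you yourself flag as the main obstacle: the induction at the cell-averaging stage. You claim that the Lax--Friedrichs average of the adjacent Riemann solutions ``lands strictly inside the region at level $n+1$,'' the inward push coming from numerical dissipation and from the decrease of $M(\cdot)$, with the growth terms controlled by $\kappa$ and the $L^{\infty}$ bound on $m$. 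This cannot be made to work as stated, because the boundaries of the region \eqref{inequaltiy} are not of Chueh--Conley--Smoller type: they contain $\int_0^x\zeta(u)\,dy$ with $\zeta$ built from the convex function $\eta_{\ast}$. Consequently, after averaging, $w(E^{n+1}_j(u))$ exceeds the averaged bound by Jensen-type gaps $\int\{\eta_{\ast}(u^{\varDelta}(\cdot,t_{n+1-}))-\eta_{\ast}(E^{n+1}(\cdot;u))\}dx$, by the entropy production $\sigma[\eta_{\ast}]-[q_{\ast}]$ across shocks, and by the quadratic remainders $R^{n}_j$; these outward errors are nonnegative, are not $o(\varDelta x)$ per step, are not controlled by $\kappa$ (they persist for $F\equiv0$), and can concentrate in a single time level, so neither numerical viscosity nor the $O(M\varDelta t)$ per-step decrease of $M(t)$ can absorb them pointwise in time. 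The paper's remark after \eqref{zeta} says exactly this: $S_{inv}$ is \emph{not} an invariant region step by step. The paper's repair, absent from your proposal, is to enlarge the radius by the cumulative functional $L^n_j$ of \eqref{functional discontinuity}, prove (Lemma \ref{lem:error}) that its total over one period is bounded by $\int_0^1\eta_{\ast}(u_0)\,dx$ \emph{independently of} $M$, and then recover the radius $M$ at $t=1$ by combining the geometric decay \eqref{M-n} with the superlinear decay estimates \eqref{estimate1}--\eqref{estimate2}, which are applied with the radius read as $M_n+L^n_j$ (the estimate $g_2\le-\tfrac12M^{1+\frac{2(\gamma-1)}{\gamma+1}-\varepsilon}$ near the upper boundary). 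Note also that building the constraint into the update, as you propose, does not avoid this analysis: the paper's scheme cuts off in \eqref{def-u^n_j}, and the whole content of Theorem \ref{thm:average} and Lemmas \ref{lem:average1}--\ref{lem:average2} is to show the cut-off is only $o(\varDelta x)$, which is needed for consistency and hence for the limit to solve \eqref{IP}; your proposal would need the same estimate and supplies no mechanism for it.

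Secondary deviations are harmless in principle but do not repair this: defining $M(t)$ by an ODE $M'=-\mu\Psi(M)+h$ instead of the paper's monotone decay $M_n=M(1-\varDelta t/4)^n$ with the growth booked separately in $L^n_j$ could be an equivalent packaging only if $h$ were controlled by the total entropy production (i.e., by the initial energy via an energy inequality), not by $\kappa$; and your fully implicit cell-by-cell solve versus the paper's two-stage in-cell iteration \eqref{appro1-2}--\eqref{appro1} is a matter of implementation, provided consistency of order $o(\varDelta x)$ is verified.
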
	
We deduce from Theorem \ref{thm:main} the following theorem.
\begin{theorem}\label{thm:periodic}
	There exists a time periodic entropy weak solution of the initial boundary value problem \eqref{IP}. 
\end{theorem}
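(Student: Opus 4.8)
The plan is to derive Theorem~\ref{thm:periodic} from Theorem~\ref{thm:main} by a Brouwer fixed point argument applied to the period map, exactly as advertised in the introduction. First I would fix the constants $\bar\rho>0$, $\bar\eta_\ast$ and a small $\varepsilon\in(0,2(\gamma-1)/(\gamma+1))$, and obtain from Theorem~\ref{thm:main} the corresponding $M,\kappa,\alpha$, the function $M(t)$, and the constant $K$ via \eqref{condition1}. The key observation is that the inequalities \eqref{initial} and \eqref{inequaltiy}, together with $\rho\ge 0$ and the conservation of mass $\int_0^1\rho(x,t)\,dx=\bar\rho$ and the (dissipated) energy bound coming from $\bar\eta_\ast$, cut out a \emph{fixed} bounded, closed, convex set $\mathcal{U}$ of admissible initial data $u_0\in L^\infty([0,1])$; the point of formula \eqref{condition1} is precisely that the bound at $t=0$ and the bound at $t=1$ are governed by the \emph{same} $M$, so solving (P1). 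I would set $\mathcal{U}$ to be the set of $u_0$ satisfying \eqref{initial} (plus the mass normalization $\int_0^1\rho_0=\bar\rho$, if needed to keep the set compact in the relevant finite-dimensional reduction), note it is nonempty, convex and bounded, and define the Poincar\'e/period map $P:u_0\mapsto u(\cdot,1)$, where $u$ is the solution furnished by Theorem~\ref{thm:main}. By \eqref{inequaltiy} at $t=1$ together with (A), $P$ maps $\mathcal{U}$ into $\mathcal{U}$.

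Second, I would address (P2): Brouwer's theorem requires a \emph{continuous} self-map of a compact convex subset of a \emph{finite-dimensional} space, whereas $P$ a priori acts on an infinite-dimensional space of $L^\infty$ data. Here I would invoke the new Lax--Friedrichs scheme highlighted in the introduction: for each mesh parameter the scheme replaces $u_0$ by its cell averages, so the discretized period map $P_{\Delta}$ is a map between finite-dimensional simplices (the discretized version of $\mathcal{U}$), and the recurrence relation of the scheme—being built from the explicit, averaged fluxes and the iteration used to solve each cell—depends continuously on the cell data. Thus $P_\Delta$ is a continuous self-map of a compact convex finite-dimensional set, and Brouwer gives a fixed point $u_0^\Delta$, i.e.\ a time periodic approximate solution for each mesh size. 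One must check that the scheme respects the invariant region so that the discretized $\mathcal{U}$ is genuinely invariant under $P_\Delta$ uniformly in $\Delta$; this is where the decay estimate from the mass–energy invariant region does the work of keeping all the approximate periodic solutions in one fixed bounded set.

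Third, I would pass to the limit $\Delta\to 0$. The uniform $L^\infty$ bound (from the invariant region) and the structure of the scheme put us inside the compensated compactness framework for isentropic gas dynamics with $\gamma\in(1,5/3]$: the entropy dissipation measures are controlled, $H^{-1}_{loc}$-compactness of $\partial_t\eta(u^\Delta)+\partial_x q(u^\Delta)$ holds for weak entropy pairs, and the div--curl/Young-measure argument (Tartar, DiPerna, Ding--Chen--Luo, Lions--Perthame--Tadmor) forces the Young measure to be a Dirac mass a.e., giving a.e.\ convergence $u^\Delta\to u$ along a subsequence. One then checks that the limit $u$ is an entropy weak solution in the sense of the second definition with $u_0^\ast$ the weak-$*$ limit of the discrete periodic initial data, that the boundary conditions $m|_{x=0}=m|_{x=1}=0$ survive the limit, and that the periodicity relation $u(\cdot,0)=u(\cdot,1)$ in the weak (measure) sense passes to the limit from $P_\Delta(u_0^\Delta)=u_0^\Delta$—which is exactly the statement encoded in the time-periodic entropy weak solution definition via the test-function identity with $\varphi(x,0)-\varphi(x,1)$. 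Since the class $\mathcal U$ is preserved, \eqref{inequaltiy} holds for the limit as well.

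The main obstacle I expect is making (P2) rigorous: showing that the period map of the new Lax--Friedrichs scheme is genuinely continuous \emph{and} genuinely self-mapping on a \emph{fixed} compact convex set, uniformly in the mesh. The delicate points are (i) that each cell update is well-defined and depends continuously on its inputs despite being defined through an iteration involving the unknown mass/energy functionals (one needs a contraction or a monotonicity argument for that local fixed point), and (ii) that the approximate solutions stay inside the mass–energy invariant region with the \emph{same} $M$ at $t=0$ and $t=1$, which is precisely the role of the decay estimate \eqref{estimate1}–\eqref{estimate2} and the scaling relation \eqref{condition1} balancing the growth $K=M^{2(\gamma-1)/(\gamma+1)-\varepsilon}$ against the quadratic energy term. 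Once the finite-dimensional Brouwer step and the invariant-region control are in hand, the compensated-compactness limit is essentially standard for this range of $\gamma$.
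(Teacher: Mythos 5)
Your skeleton (discretize, apply Brouwer to the period map of the new Lax--Friedrichs scheme, then pass to the limit by compensated compactness) is the same as the paper's, and your third paragraph matches Propositions \ref{pro:compact}--4.2. But there is a genuine gap in the fixed-point step. The set cut out by \eqref{initial} is \emph{not} a fixed convex set in the original variables: the lower and upper bounds contain $\int_0^x\zeta(u_0(y))\,dy$, which depends on $u_0$ itself, and at $t=1$ the corresponding bounds contain $\int_0^x\zeta(u(y,1))\,dy$, which is a \emph{different} functional of a \emph{different} function. So the map $u_0\mapsto u(\cdot,1)$ is not a self-map of a fixed compact convex set in the variables $(z,w)$, and Brouwer cannot be applied there. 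The paper resolves this by applying Brouwer to the \emph{shifted} quantities $\bigl(z(\breve u^n_j)-I^n_j,\,w(\breve u^n_j)-I^n_j\bigr)$ (the discrete analogue of $\tilde z,\tilde w$ in \eqref{transformation}), for which the invariant region really is a box, with the self-mapping property supplied by the decay $M_{2N_t}\le \tfrac45 M$ together with the $M$-independent bound on the accumulated jump terms (Lemmas \ref{lem:error} and \ref{lem:average3}). Your proposal never performs this change of variables.

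This omission then hides a second, essential step that your proposal skips entirely: the Brouwer fixed point only yields $(\breve\rho^0_j)^*=(\breve\rho^{2N_t}_j)^*$ and $(\breve v^0_j-I^0_j)^*=(\breve v^{2N_t}_j-I^{2N_t}_j)^*$, i.e.\ periodicity of the \emph{shifted} velocity. Since $I^0_j$ and $I^{2N_t}_j$ are themselves built from the (a priori different) states at $t=0$ and $t=1$, one must still prove $(\breve v^0_j)^*=(\breve v^{2N_t}_j)^*$. The paper does this by a separate Gronwall-type contradiction argument (the computation \eqref{fixed} in the formal outline and \eqref{recurrence3} in the discrete proof), iterating the integral identity $v^*(x,1)-v^*_0(x)=\int_0^x\frac12\rho^*_0(v^*+v^*_0)(v^*-v^*_0)\,dy$ to force the difference to vanish. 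Without this step the argument does not produce a time periodic solution of the original problem. A smaller but real issue is continuity: the scheme's cell values satisfy the Green-formula recurrence \eqref{recurrence1} only up to $o(\varDelta x)$ errors coming from the Riemann solvers and the cut-off in \eqref{def-u^n_j}; the paper obtains a genuinely continuous finite-dimensional map only by introducing the exact auxiliary recurrence \eqref{recurrence2} with those errors removed, a device your proposal does not supply.
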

\begin{remark}\label{rem:bound}\normalfont
We will deduce from \eqref{inequaltiy} that 
	\begin{align}
		\begin{alignedat}{1}
			&|z(u(x,t))|=O(M),\;|w(u(x,t))|=O(M),\quad(x,t)\in[0,1]\times[0,1].
		\end{alignedat}
		\label{bound1}
	\end{align}
In addition, it will follows from the conservation of mass and 
energy inequality that 
	\begin{align}
		\begin{alignedat}{1}
			&\int^x_{0}\eta_{\ast}(u(y,t))dy=O(1),\;
			\int^x_{0}\rho(y,t)dy=O(1),
			\;(x,t)\in[0,1]\times[0,1].
		\end{alignedat}
		\label{bound2}
\end{align}
We notice that $O(1)$ is independent of 
$M$.

In view of \eqref{initial} and \eqref{inequaltiy}, we find that our solution are contained in the same bounded 
set.
\end{remark}
\begin{remark}\label{rem:boundary}
We let the lower and upper bounds in \eqref{inequaltiy} be 
	\begin{align*}
		\begin{alignedat}{2}
			&L(x,t;u)=-M(t)+\int^x_{0}\left\{\eta_{\ast}(u(y,t))-\alpha\rho(y,t)+K\right\}dy,\\
			&U(x,t;u)=M(t)+\int^x_{0}\left\{\eta_{\ast}(u(y,t))-\alpha\rho(y,t)+K\right\}dy,
		\end{alignedat}
	\end{align*}
respectively. Then we notice that 
	\begin{align}
		-L(0,t;u)\leq U(0,t;u),\;-L(1,t;u)\geq U(1,t;u).
		\label{boundary-bound}
	\end{align}
In fact, the former is clear. The latter is deduced from \eqref{condition1}, \eqref{condition2} and \eqref{bound1} as follows.
	\begin{align}
		L(1,t;u)+U(1,t;u)=&2\int^1_{0}\left\{\eta_{\ast}(u(x,t))-\alpha\rho(x,t)+K\right\}dx\nonumber\\
		=&2\int^1_{0}\left\{\left(\eta_{\ast}(u(x,t))-\eta_{\ast}({u}_0(x))\right)
		-\alpha\left(\rho(x,t)-\bar{\rho}\right)-1\right\}dx\nonumber\\
		\leq&2\int^1_0\int^t_0F(x,s)m(x,s)dxds-1\nonumber\\
		\leq&0,
	\end{align}
choosing $\kappa$ small enough.

\eqref{boundary-bound} is a necessary condition that 
\eqref{inequaltiy} is an invariant region with boundary data $m=0$.

Tsuge \cite{T1}--\cite{T8} propose various invariant regions. 
Their lower and upper bounds consist of known functions, which are 
increasing. The property plays an important role for their analysis. However, 
they cannot satisfy \eqref{boundary-bound}. To solve this, we introduce 
an invariant region consisting of not known functions but unknown functions 
such as the mass and energy (see \eqref{inequaltiy}).
\end{remark}

\subsection{Outline of the proof (formal argument)}

The proof of main theorem is a little complicated. Therefore, 
before proceeding to the subject, let us grasp the point of the main estimate by a formal argument. 
We assume that a solution is smooth and the density is nonnegative in this section.

We consider the physical region $\rho\geqq0$ (i.e., $w\geqq z$.). Recalling Remark \ref{rem:Riemann-invariant}, it suffices to 
derive the lower bound of $z(u)$ and the upper bound of $w(u)$ to obtain the bound of $u$. To do this, we diagonalize \eqref{force}. 
If solutions are smooth, we deduce from \eqref{force} 
\begin{align}
z_t+\lambda_1z_x=F(x,t),\quad
w_t+\lambda_2w_x=F(x,t),
\label{force2}
\end{align} 
where $\lambda_1$ and $\lambda_2$ are the characteristic speeds defined as follows 
\begin{align}
\lambda_1=v-\rho^{\theta},\quad\lambda_2=v+\rho^{\theta}.
\label{char}
\end{align}

We introduce $\tilde{z},\tilde{w}$ as follows.
\begin{align}
	\begin{split}
		z=\tilde{z}+\int^x_{0}\left\{\eta_{\ast}(u)-\alpha\rho+K\right\}dy,\quad w=\tilde{w}+\int^x_{0}\left\{\eta_{\ast}(u)-\alpha\rho+K\right\}dy.
	\end{split}
	\label{transformation}
\end{align}
We deduce from the conservation of mass and energy that
\begin{align}
	\tilde{z}_t+\lambda_1 \tilde{z}_x=g_1(x,t,u),\quad
	\tilde{w}_t+\lambda_2 \tilde{w}_x=g_2(x,t,u),
	\label{Riemann2}
\end{align}
where 
\begin{align}
	\begin{alignedat}{2}
		&g_1(x,t,u)=&&-K\lambda_1+\dfrac{1}{\gamma(\gamma-1)}\rho^{\gamma+\theta}
		+\dfrac{1}{\gamma}\rho^{\gamma}v+\dfrac{1}{2}\rho^{\theta+1}v^2-\alpha\rho^{\theta+1}\\
		&&&+F(x,t)-\int^x_0F(y,t)m(y,t)dy,\\
		&g_2(x,t,u)=&&-K\lambda_2-\dfrac{1}{\gamma(\gamma-1)}\rho^{\gamma+\theta}
		+\dfrac{1}{\gamma}\rho^{\gamma}v-\dfrac{1}{2}\rho^{\theta+1}v^2+\alpha\rho^{\theta+1}\\
		&&&+F(x,t)-\int^x_0F(y,t)m(y,t)dy.
	\end{alignedat}
\label{inhomo2}
\end{align}
Then, we notice that
\begin{align*}
	-M\leq \tilde{z}_0(x),\quad \tilde{w}_0(x)\leq M,\quad \tilde{\rho}_0(x)\geq0.
\end{align*}

Let us prove that 
\begin{align*}
	S_{inv}=\{(\tilde{z},\tilde{w})\in{\bf R}^2;\tilde{\rho}\geq0,\;\tilde{z}\geq-M,\;\tilde{w}\leq M\}
\end{align*}
is an invariant region. 

To achieve this, assuming that 
\begin{align*}
	-M< \tilde{z}_0(x),\quad \tilde{w}_0(x)< M
\end{align*}
and there exist $x_{\ast}\in(0,1),\;0<t_{\ast}\leq1$ such that 
\eqref{invariant1} or \eqref{invariant2} holds, we 
deduce a contradiction, where
\begin{align}
\begin{alignedat}{2}
	&-M<\tilde{z}(x,t),\;\tilde{w}(x,t)< M,\quad x\in(0,1),\;
	0\leq t<t_{\ast}\\&\text{\hspace*{0ex}and}\quad
	\tilde{z}(x_{\ast},t_{\ast})=-M,\;\tilde{w}(x_{\ast},t_{\ast})\leq M,	
\end{alignedat}\label{invariant1}\\	
\begin{alignedat}{2}
	&-M<\tilde{z}(x,t),\;\tilde{w}(x,t)< M,\quad x\in(0,1),\;
	0\leq t<t_{\ast}\\&\text{\hspace*{0ex}and}\quad
	\tilde{z}(x_{\ast},t_{\ast})\geq-M,\;\tilde{w}(x_{\ast},t_{\ast})=M.
\end{alignedat}\label{invariant2}
\end{align}

To do this, we prove 
\begin{align}	&g_1(x_{\ast},t_{\ast},u)>0\text{, when \eqref{invariant1} holds},
	\label{g1}\\
	&g_2(x_{\ast},t_{\ast},u)<0\text{, when \eqref{invariant2} holds}.
	\label{g2}
\end{align} 
We first investigate \eqref{g2}.

Under \eqref{invariant2}, for $0\leq t \leq t_{\ast}$, we 
show that the energy is bounded.
From \eqref{Riemann2}, we notice $\tilde{\rho}=\rho$. Observing 
\begin{align*}
	\dfrac{(\rho(x,t))^{\theta}}{\theta}=\dfrac{({\tilde{\rho}(x,t)})^{\theta}}{\theta}=\dfrac{\tilde{w}(x,t)-\tilde{z}(x,t)}2\leq M,
\end{align*}we have 
\begin{align}
	\rho(x,t)\leq (\theta M)^{\frac1{\theta}}
	\label{1}
\end{align}
and 
\begin{align}
	\begin{alignedat}{2}
		|v(x,t)|=&\left|\tilde{v}(x,t)+\int^x_0\zeta(u(y,t))dy\right|\leq |\tilde{v}(x,t)|+\left|\int^x_0\zeta(u(y,t))dy\right|\\
		\leq& M
		+\int^1_0\eta_{\ast}(u(x,t))dx+\alpha\bar{\rho}+K.
	\end{alignedat}
	\label{2}
\end{align}
From \eqref{condition2}, \eqref{1}, \eqref{2} and the energy inequality, we obtain
\begin{align}
	\begin{alignedat}{2}
		\int^1_0&\eta_{\ast}(u(x,t))dx\leq\int^1_0\eta_{\ast}(u_0(x))dx+\int^t_0\int^1_0m(x,s)F(x,s)dxds\\
		\leq& \bar{\eta}_{\ast}+\int^t_0 \kappa(\theta M)^{\frac1{\theta}}
		\left(M+\int^1_0\eta_{\ast}(u(x,s))dx+\alpha\bar{\rho}+K\right)ds\\
		\leq& C+\int^t_0 \kappa(\theta M)^{\frac1{\theta}}\int^1_0\eta_{\ast}(u(x,s))dxds,
	\end{alignedat}
\end{align}
where $C=\bar{\eta}_{\ast}+\kappa(\theta M)^{\frac1{\theta}}
\left(M+\alpha\bar{\rho}+K\right)$. We deduce from the Gronwall inequality 
\begin{align*}
	\int^1_0&\eta_{\ast}(u(x,t))dx\leq C\exp\left|\int^t_0 \kappa(\theta M)^{\frac1{\theta}}ds\right|
	\leq C\exp(\kappa(\theta M)^{\frac1{\theta}}).
\end{align*} Choosing $\kappa$ small enough, we have 
\begin{align}
	\int^1_0\eta_{\ast}(u(x,t))dx=O(1)\quad 0\leq t\leq t_{\ast},
	\label{bound energy}
\end{align}
where $O(1)$ is the Landau symbol as $M\rightarrow\infty$.

In this case, from \eqref{condition1} and \eqref{bound energy}, we have
\begin{align}
	\begin{alignedat}{2}
		&K=M^{\frac{2(\gamma-1)}{\gamma+1}-\varepsilon},\quad \alpha=M^{\frac{2(\gamma-1)}{\gamma+1}-\varepsilon}/\bar{\rho}+O(1),
		\quad \rho=\tilde{\rho},\quad \tilde{v}\geq0,\\
		&\lambda_2=\tilde{\lambda}_2+O(M^{\frac{2(\gamma-1)}{\gamma+1}-\varepsilon})
		, \quad\tilde{\lambda}_2=\tilde{v}+\tilde{\rho}^{\theta}
		=M-\left(1/\theta-1\right)\rho^{\theta}\geq\theta M,
	\end{alignedat}
\end{align}
where $O(1)$ and $O(M^{\frac{2(\gamma-1)}{\gamma+1}-\varepsilon})$ are the Landau symbol as $M\rightarrow\infty$.

Separating two parts, we shall prove \eqref{g2}.
\begin{enumerate}
	\item $\rho>\left(\dfrac{\bar{\rho}M}{3}\right)^{\frac{1}{\theta+1}}$

	For $(x,t)=(x_{\ast},t_{\ast})$, from $\gamma>1$, $\eqref{condition1}$ and $\eqref{condition2}$, we have
	\begin{align}
		\begin{alignedat}{3}
			&g_2(x,t,u)&&\leq&&-K\tilde{\lambda}_2
			-\dfrac{\gamma+1}{2\gamma^2(\gamma-1)}\rho^{\gamma+\theta}
-\dfrac{\rho^{\theta+1}}2\left({v}-\frac{\rho^{\theta}}{\gamma}\right)^2
			+\alpha\rho^{\theta+1}\\&&&&&+O(M^{\frac{4(\gamma-1)}{\gamma+1}-2\varepsilon})\\
			&&&\leq&&-\rho^{\theta+1}\left\{\dfrac{\gamma+1}{2\gamma^2(\gamma-1)}\rho^{\gamma-1}-\alpha \right\}+O(M^{\frac{4(\gamma-1)}{\gamma+1}-2\varepsilon})\\
			&&&\leq&&
			-\frac12M
			^{1+\frac{2(\gamma-1)}{\gamma+1}-\varepsilon}
,
			\end{alignedat}
		\label{estimate1}
	\end{align}choosing $M$ large enough and $\kappa$ small enough.
	
	\item $\rho\leq \left(\dfrac{\bar{\rho}M}{3}\right)^{\frac{1}{\theta+1}}$

	For $(x,t)=(x_{\ast},t_{\ast})$, we have 
	\begin{align}
		\begin{alignedat}{2}
			&g_2(x,t,u)&\leq&-K\tilde{\lambda}_2
			-\dfrac{\gamma+1}{2\gamma^2(\gamma-1)}\rho^{\gamma+\theta}
			-\dfrac{\rho^{\theta+1}}2\left({v}-\frac{\rho^{\theta}}{\gamma}\right)^2
			+\alpha\rho^{\theta+1}\\&&&+O(M^{\frac{4(\gamma-1)}{\gamma+1}-2\varepsilon})\\
			&&\leq&-M^{\frac{2(\gamma-1)}{\gamma+1}-\varepsilon}\left(M-(1/\theta-1)\left(\dfrac{\bar{\rho}M}{2}\right)^{\frac{\theta}{\theta+1}} \right)+\dfrac{M}{3}M^{\frac{2(\gamma-1)}{\gamma+1}-\varepsilon}\\&&&+O(M^{\frac{4(\gamma-1)}{\gamma+1}-2\varepsilon})\\&&<&-\frac12M
			^{1+\frac{2(\gamma-1)}{\gamma+1}-\varepsilon}
,
		\end{alignedat}
	\label{estimate2}
	\end{align}choosing $M$ large enough and $\kappa$ small enough.
\end{enumerate}Therefor, we complete the proof of \eqref{g2}. On the other hand, 
since $\tilde{w}$ attains the maximum at $(x,t)=(x_{\ast},t_{\ast})$, we find that 
$\tilde{w}_t(x_{\ast},t_{\ast})\geq0,\;\tilde{w}_x(x_{\ast},t_{\ast})=0$. Then, from $\eqref{Riemann2}_2$, 
we have $g_2(x,t,u)\geq0$ at $(x,t)=(x_{\ast},t_{\ast})$. This is a contradiction.
We can similarly prove \eqref{g1} and deduce a contradiction.

It should be noted that \eqref{estimate1}--\eqref{estimate2} 
yields a decay estimate.

\begin{remark}
We review the role of each component of $\zeta(u)$ in the above argument. 
We recall that $\zeta(u)$  in \eqref{zeta} consits of tree terms $\eta_{\ast}(u),\;
\alpha\rho$ and $K$. 
When the 
density is large (i), $\eta_{\ast}(u)$ is a leading term in 
 \eqref{estimate1}. On the other hand, 
when the density is small (ii), so is $K$ in \eqref{estimate2}. 
However, if $\zeta(u)$ has only these two terms, \eqref{boundary-bound} does not 
hold. To solve this, we add $\alpha \rho$ to $\eta_{\ast}(u)$. These terms thus play the role of trinity.

Since \eqref{force} has a discontinuous solution, the above argument is formal. 
In fact, $S_{inv}$ is not an invariant region for our problem (see \eqref{bound2}) exactly, 
because our weak solutions increase due to their discontinuities, whose 
quantity is denoted by $J^n_j$ in \eqref{functional discontinuity}. We will treat with $J^n_j$ 
by the decay estimate \eqref{estimate1}--\eqref{estimate2}. 
\end{remark}

Next, we prove the existence of a time periodic solution.
We find that both $(\tilde{z}_0(x),\tilde{w}_0(x))$ and $(\tilde{z}(x,1),\tilde{w}(x,1))$
are containded in $S_{inv}$. Therfore, applying the fixed point theorem, we obtain a fixed point $(\tilde{z}^*_0(x),\tilde{w}^*_0(x))=(\tilde{z}^*(x,1),\tilde{w}^*(x,1))$. (Exactly speaking, 
we apply the Brauwer fixed point theorem to a sequence deduced from a difference scheme.)
This implies $(\tilde{\rho}^*_0(x),\tilde{v}^*_0(x))=(\tilde{\rho}^*(x,1),\tilde{v}^*(x,1))$. 
However, we must prove a fixed point for original unknown functions.

First, since $\rho=\tilde{\rho}$, we have ${\rho}^*_0(x)={\rho}^*(x,1)\; x\in[0,1]$.
Next, let us prove ${v}^*_0(x)={v}^*(x,1)\; x\in[0,1]$.
Recalling \eqref{transformation}, we find that
\begin{align}
	&v^*(x,1)=\tilde{v}^*(x,1)+\int^x_{0}\left\{\eta_{\ast}(u^*)-\alpha\rho^*+K\right\}dy,\\
	&v^*_0(x)=\tilde{v}^*_0(x)+\int^x_{0}\left\{\eta_{\ast}(u^*_0)-\alpha\rho^*_0+K\right\}dy.
\end{align}
From ${\rho}^*_0(x)={\rho}^*(x,1)$, we obtain
\begin{align}
	v^*(x,1)-v^*_0(x)=\int^x_{0}\frac12\rho^*_0(x)(v^*(y,1)+v^*_0(y))(v^*(y,1)-v^*_0(y))dy.
	\label{fixed}
\end{align}
We assume that there exists a point $x^{\star}\in(0,1)$ such that $v^*_0(x^{\star})\ne
v^*(x^{\star},1)$. Then, we set $\displaystyle x^{\star}_0=\inf_x
\left\{x\in[0,1];x<x^{\star},\;v^*_0(x)\ne v^*(x,1)\right\}$.
From \eqref{fixed}, since $v^*(0,1)-v^*_0(0)=0$, we find that
$v^*(x^{\star}_0,1)-v^*_0(x^{\star}_0)=0$. Differentiating
\eqref{fixed}, deviding the resultant equation by $v^*(x,1)-v^*_0(x)$ and integrating the resultant one from 
$x^{\star}_0$ to $x^{\star}$, we have 
\begin{align}
	\begin{alignedat}{2}
		&\log|v^*(x^{\star},1)-v^*_0(x^{\star})|-\log|v^*(x^{\star}_0,1)-v^*_0(x^{\star}_0)|\\
		&=\int^{x^{\star}}_{x^{\star}_0}\frac12\rho^*_0(x)(v^*(y,1)+v^*_0(y))dy.
	\end{alignedat}
\end{align}
$\log|v^*(x^{\star}_0,1)-v^*_0(x^{\star}_0)|$ is $-\infty$. On the other hand, 
the right hand side is bounded. This is a contradiction.

Although the above argument is formal, it is essential. In fact, we shall implicitly use this property in Section 3--4. However, we cannot justify 
the above argument by the standard difference scheme such as Godunov or Lax-Friedrichs 
scheme. Therefore, we introduce a new type Lax Friedrichs scheme in Section 2. Recently, the various difference schemes are developed in \cite{T1}--\cite{T8}, 
which consist of known functions. On the other hand, the present approximate solutions 
include unknown functions in the form of \eqref{transformation} with constants $\tilde{z},\tilde{w}$ (see \eqref{appro1}).

The present paper is organized as follows.
In Section 2, we construct approximate solutions by 
the Lax Friedrichs scheme mentioned above. In Section 3, we drive the bounded estimate of our approximate solutions. In Section 4, we prove the existence of a fixed point by using a recurrence relation which is deduced from our approximate solutions.

\section{Construction of Approximate Solutions}
\label{sec:construction-approximate-solutions}
In this section, we construct approximate solutions. In the strip 
$0\leqq{t}\leqq{1}$, we denote these 
approximate solutions by $u^{\varDelta}(x,t)
=(\rho^{\varDelta}(x,t),m^{\varDelta}(x,t))$. 
For $N_x\in{\bf N}$, we define the space mesh
lengths by ${\varDelta}x=1/(2N_x)$. 
Using $M$ in \eqref{condition2}, we take time mesh length ${\varDelta}{x}$ such that 
\begin{align}
	\frac{{\varDelta}x}{{\varDelta}{t}}=[\hspace{-1.2pt}[2M]\hspace{-1pt}]+1,
\label{CFL}
\end{align}
where $[\hspace{-1.2pt}[x]\hspace{-1pt}]$ is the greatest integer 
not greater than $x$. Then we define $N_t=1/(2{\varDelta t})\in{\bf N}$.
In addition, 
we set 
\begin{align*}
	(j,n)\in{\bf N}_x\times{\bf N}_t,
\end{align*}
where ${\bf N}_x=\{0,1,2,\ldots,2N_x\}$ and ${\bf N}_t=\{0,1,2,\ldots,2N_t\}$.  For simplicity, we use the following terminology
\begin{align}
\begin{alignedat}{2}
&x_j=j{\varDelta}x,\;t_n=n{\varDelta}t,\;t_{n.5}=\left(n+\frac12\right){\varDelta}t,
\;t_{n-}=n{\varDelta}t-0,\;t_{n+}=n{\varDelta}t+0.
\end{alignedat}
\label{terminology} 
\end{align}

First we define $u^{\varDelta}(x,-0)$ by $u^{\varDelta}(x,-0)=u_0(x)$
and set 
\begin{align*}
	J_n=\{k\in{\bf N}_x;k+n=\text{odd}\}.
\end{align*}
Then, for $j\in J_0$, we define $E_j^0(u)$ by
\begin{align*}
	E_j^0(u)=\frac1{2{\varDelta}x}\int^{x_{j+1}}_{x_{j-1}}
	u^{\varDelta}(x,-0)dx.
\end{align*}

Next, assume that $u^{\varDelta}(x,t)$ is defined for $t<{t}_{n}$. 

\vspace*{1ex}

(i) $n$ is even

\vspace*{1ex}

Then, for $j\in J_n$, we define $E^n_j(u)$ by 
\begin{align*}
	E^n_j(u)=\frac1{2{\varDelta}x}\int_{x_{j-1}}^{x_{j+1}}u^{\varDelta}(x,t_{n-})dx.
\end{align*}

\vspace*{1ex}

(ii) $n$ is odd

\vspace*{1ex}

Then, for $j\in J_n\setminus\{0,2N_x\}$, we define $E^n_j(u)$ by 
\begin{align*}
	E^n_j(u)=\frac1{2{\varDelta}x}\int_{x_{j-1}}^{x_{j+1}}u^{\varDelta}(x,t_{n-})dx;
\end{align*}
for $j\in \{0,2N_x\}$, we define $E^n_j(u)$ by 
\begin{align*}
	E^n_0(u)=&\frac1{{\varDelta}x}\int_{0}^{x_1}u^{\varDelta}(x,t_{n-})dx,\;
	E^n_{2N_x}(u)=\frac1{{\varDelta}x}\int_{x_{2N_x-1}}^
{x_{2N_x}}u^{\varDelta}(x,t_{n-})dx.
\end{align*}
Let $E^n(x;u)$ be a piecewise constant function defined by
\begin{align*}
	E^n(x;u)=
	\begin{cases}
		E^n_j(u),\quad &x\in [x_{j-1},x_{j+1})\quad (j\in J_n,\;\text{$n$ is even})	,\vspace*{0.5ex}\\
		E^n_j(u),\quad &x\in [x_{j-1},x_{j+1})\quad (j\in J_n,\;j\ne0,2N_x,\;\;\text{$n$ is odd})	,\vspace*{0.5ex}\\
		E^n_0(u),\quad &x\in [0,x_1)\quad (j=0,\;\;\text{$n$ is odd}),\vspace*{0.5ex}\\
	E^n_{2N_x}(u),&x\in [x_{2N_x-1},x_{2N_x})\quad (j=2N_x,\;\;\text{$n$ is odd}).	
	\end{cases}
\end{align*}

To define $u_j^n=(\rho_j^n,m_j^n)$ for $j\in J_n$, we first define symbols $I^n_j$ and $L^n_j$. Let the approximation of $\zeta(u)$ be
\begin{align*}
	I^n_j:=
	\begin{cases}\displaystyle 
	\int^{x_{j-1}}_{x_0}\zeta(E^n(x;u))dx 
		+\frac{1}2\int_{x_{j-1}}^{x_{j+1}}\zeta(E^n(x;u))dx&\\\displaystyle \quad
         =\int^{x_{j}}_{x_0}\zeta(E^n(x;u))dx,\;&\text{$n$ is even,}\vspace*{1ex}\\
		\displaystyle \int^{x_{j-1}}_{0}\zeta(E^n(x;u))dx
		+\frac{1}2\int_{x_{j-1}}^{x_{j+1}}\zeta(E^n(x;u))dx&\\\displaystyle \quad
         =\int^{x_{j}}_{x_0}\zeta(E^n(x;u))dx,&\text{$n$ is odd,\;$j\ne 0,2N_x$,}\vspace*{1ex}\\
		\displaystyle \frac{1}2\int^{x_1}_{0}\zeta(E^n(x;u))dx	,\;&\text{$n$ is odd,\;$j=0$,}\vspace*{1ex}\\
		\displaystyle \int^{x_1}_{0}\zeta(E^n(x;u))dx
		+\frac{1}2\int^{x_{2N_x}}_{x_{2N_x-1}}\zeta(E^n(x;u))dx,\;&\text{$n$ is odd,\;	$j=2N_x$,}	
	\end{cases}
\end{align*}where $\zeta$ is defined in \eqref{zeta}.

Let ${\mathcal D}=(x(t),t)$ denote a discontinuity in 
${u}^{\varDelta}(x,t),\;[\eta_{\ast}]$ and $[q_{\ast}]$ 
denote the jump of $\eta_{\ast}({u}^{ \varDelta}(x,t))$ and $q_{\ast}({u}^{ \varDelta}(x,t))$ across ${\mathcal D}$ from 
left to right, respectively,
\begin{align*}
&[\eta_{\ast}]=\eta_{\ast}({u}^{\varDelta}(x(t)+0,t))-\eta_{\ast}({u}^{ \varDelta}(x(t)-0,t)),
\\&
{[q_{\ast}]=q_{\ast}({u}^{ \varDelta}(x(t)+0,t))-q_{\ast}({u}^{ \varDelta}(x(t)-0,t))}.
\end{align*}
To measure the error in the entropy condition and the gap of the 
energy at $t_{n\pm}$, we introduce the following functional.
\begin{align}
	\begin{alignedat}{2}
	L^n_j=&\int^{t_{n}}_{0}\sum_{0\leq x\leq 1}\sigma[\eta_{\ast}]-[q_{\ast}]dt
	+\sum_{n\in{\bf N}_t}\int^1_0\left\{\eta_{\ast}({u}^{ \varDelta}(x,t_{n-0}))-\eta_{\ast}(E^n(x;u))\right\}dx\\&
	+\left(1+C_{\gamma}\alpha\int^1_0\rho_0(x)dx\right)\sum_{\substack{j\in J_n\\n\in {\bf N}_t}}\frac1{2{\varDelta}x}\int^{x_{j+1}}_{x_{j-1}}(x_{j+1}-x)R^n_j(x)dx,
	\end{alignedat}
\label{functional discontinuity}	
\end{align}
where
\begin{align}
C_{\gamma}=\max\left\{2^{\theta}(\theta+1),\dfrac{2\gamma(\gamma-1)}{\gamma-2+\left(\frac12\right)^{\gamma-1}}\right\},
\label{CGamma}
\end{align} 
\begin{align*}
R^n_j(x)=&\int^1_0(1-\tau)\cdot{}^t\left({u}^{\varDelta}(x,t_{n-})-u^n_j\right)
\nabla^2\eta_{\ast}\left(u^n_j+\tau\left\{{u}^{\varDelta}(x,t_{n-})-u^n_j\right\}\right)\\&\times\left({u}^{\varDelta}(x,t_{n-})-u^n_j\right)d\tau
\end{align*}
and the summention in $\sum_{0\leq x\leq1}$
is taken over all discontinuities in ${u}^{ \varDelta}(x,t)$ at a fixed time $t$ over 
$x\in[0,1]$,
$\sigma$ is the propagating speed of the discontinuities.

From the entropy condition, $\sigma[\eta_{\ast}]-[q_{\ast}]\geq0$. From the Jensen inequality, 
$\int^1_0\left\{\eta_{\ast}({u}^{ \varDelta}(x,t_{n-0}))-\eta_{\ast}(E^n(x;u))\right\}dx\geq0$. Therefore, we find that $L^n_j\geq0$.

Using $I^n_j,L^n_j$, we define $u_j^n$ as follows.

We choose $\delta$ such that $1<\delta<1/(2\theta)$. If 
\begin{align*}
	E^n_j(\rho):=
	\frac1{2{\varDelta}x}\int_{x_{j-1}}^{x_{j+1}}\rho^{\varDelta}(x,t_{n-})dx<({\varDelta}x)^{\delta},
\end{align*} 
we define $u_j^n$ by $u_j^n=(0,0)$;
otherwise, setting\begin{align}
		{z}_j^n:=
		\max\left\{z(E_j^n(u)),\;-M_n-L^n_j+I^n_j\right\},\;
		w_j^n:=\min\left\{w(E_j^n(u)),\;M_n+L^n_j+I^n_j\right\}
		,
	\label{def-u^n_j}
\end{align}

we define $u_j^n$ by
\begin{align*}
	u_j^n:=(\rho_j^n,m_j^n):=(\rho_j^n,\rho_j^nv^n_j)
	:=\left(\left\{\frac{\theta(w_j^n-z_j^n)}{2}\right\}
	^{1/\theta},
	\left\{\frac{\theta(w_j^n-z_j^n)}{2}\right\}^{1/\theta}
	\frac{w_j^n+z_j^n}{2}\right).
\end{align*}

\begin{remark}\label{rem:E}\normalfont
We find 
	\begin{align}
		\begin{split}
			-M_n-L^n_j+I^n_j\leqq z(u_j^n),\quad
			{w}(u_j^n)\leqq M_n+L^n_j+I^n_j.
		\end{split}
		\label{remark2.1}
	\end{align}

	This implies that we cut off the parts where 
	$z(E_j^n(u))<-M_n-L^n_j+I^n_j$
	and $w(E_j^n(u))>M_n+L^n_j+I^n_j$
	in  defining $z(u_j^n)$ and 
	${w}(u_j^n)$. Observing \eqref{average}, the order of these cut parts is $o({\varDelta}x)$. The order is so small that we can deduce the compactness and convergence of our approximate solutions.

\end{remark}

We must construct our approximate solutions $u^{\varDelta}(x,t)$ near 
the boundary and in an interior domain. The construction 
of two cases is similar. Therefore, we are devoted to treating with the construction in 
the cell in the interior domain.

\subsection{Construction of Approximate Solutions in the Cell of the interior domain}
\label{subsec:construction-approximate-solutions}
We then assume that 
approximate solutions $u^{\varDelta}(x,t)$ are defined in domains $D_1: 
t<{t}_n\quad(
n\in {\bf N}_t)$ and $D_2:
x<x_{j-1}\quad(j\in J_{n+1}),\;{t}_n\leqq{t}<t_{n+1}$. 
By using $u_j^n$ defined above and $u^{\varDelta}(x,t)$ defined in $
D_2$, we 
construct the approximate solutions  in the cell $n{\varDelta}{t}\leqq{t}<(n+1){\varDelta}{t}\quad 
(n\in{\bf N}_t),\quad x_{j-1}\leqq{x}<x_{j+1}\quad
(j\in J_n\setminus\{0,1,2N_x-1,2N_x\})$.

We first solve a Riemann problem with initial data $(u_{j-1}^n,u_{j+1}^n)$. 
Call constants $u_{\rm L}(=u_{j-1}^n), u_{\rm M}, u_{\rm R}(=u_{j+1}^n)$ the left, middle and 
right states, respectively. Then the following four cases occur.
\begin{itemize}
	\item {\bf Case 1} A 1-rarefaction wave and a 2-shock arise. 
	\item {\bf Case 2} A 1-shock and a 2-rarefaction wave arise. 
	\item {\bf Case 3} A 1-rarefaction wave and a 2-rarefaction arise.
	\item {\bf Case 4} A 1-shock and a 2-shock arise.
\end{itemize}
We then construct approximate solutions $u^{\varDelta}(x,t)$ by perturbing 
the above Riemann solutions.

Let $\alpha$ be a constant satisfying $1/2<\alpha<1$. Then we can choose 
a positive value $\beta$ small enough such that $\beta<\alpha$, $1/2+\beta/2<\alpha<
1-2\beta$, $\beta<2/(\gamma+5)$ and $(9-3\gamma)\beta/2<\alpha$.

In this step, we 
consider Case 1 in particular. The constructions of Cases 2--4 are similar 
to that of Case 1. We consider only the case in which $u_{\rm M}$ is away from the vacuum. The other case (i.e., the case where $u_{\rm M}$ is near the vacuum) is a little technical. Therefore, we postpone this case to Appendix B.

Consider the case where a 1-rarefaction wave and a 2-shock arise as a Riemann 
solution with initial data $(u_j^n,u_{j+1}^n)$. Assume that 
$u_{\rm L},u_{\rm M}$ 
and $u_{\rm M},u_{\rm R}$ are connected by a 1-rarefaction and a 2-shock 
curve, respectively. \vspace*{10pt}\\
{\it Step 1}.\\
In order to approximate a 1-rarefaction wave by a piecewise 
constant {\it rarefaction fan}, we introduce the integer  
\begin{align*}
	p:=\max\left\{[\hspace{-1.2pt}[(z_{\rm M}-z_{\rm L})/({\varDelta}x)^{\alpha}]
	\hspace{-1pt}]+1,2\right\},
\end{align*}
where $z_{\rm L}=z(u_{\rm L}),z_{\rm M}=z(u_{\rm M})$ and $[\hspace{-1.2pt}[x]\hspace{-1pt}]$ is the greatest integer 
not greater than $x$. Notice that
\begin{align}
	p=O(({\varDelta}x)^{-\alpha}).
	\label{order-p}
\end{align}
Define \begin{align*}
	z_1^*:=z_{\rm L},\;z_p^*:=z_{\rm M},\;w_i^*:=w_{\rm L}\;(i=1,\ldots,p),
\end{align*}
and 
\begin{align*}
	z_i^*:=z_{\rm L}+(i-1)({\varDelta}x)^{\alpha}\;(i=1,\ldots,p-1).
\end{align*}
We next introduce the rays $x=(j+1/2){\varDelta}x+\lambda_1(z_i^*,z_{i+1}^*,w_{\rm L})
(t-n{\varDelta}{t})$ separating finite constant states 
$(z_i^*,w_i^*)\;(i=1,\ldots,p)$, 
where  
\begin{align*}
	\lambda_1(z_i^*,z_{i+1}^*,w_{\rm L}):=v(z_i^*,w_{\rm L})
	-S(\rho(z_{i+1}^*,w_{\rm L}),\rho(z_i^*,w_{\rm L})),
\end{align*}
\begin{align*}
	\rho_i^*:=\rho(z_i^*,w_{\rm L}):=\left(\frac{\theta(w_{\rm L}-z_i^*)}2\right)^{1/\theta}\;,
	\quad{v}_i^*:={v}(z_i^*,w_{\rm L}):=\frac{w_{\rm L}+z_i^*}2
\end{align*}
and

\begin{align}
	S(\rho,\rho_0):=\left\{\begin{array}{lll}
		\sqrt{\displaystyle{\frac{\rho(p(\rho)-p(\rho_0))}{\rho_0(\rho-\rho_0)}}}
		,\quad\mbox{if}\;\rho\ne\rho_0,\\
		\sqrt{p'(\rho_0)},\quad\mbox{if}\;\rho=\rho_0.
	\end{array}\right.
	\label{s(,)}
\end{align}

We call this approximated 1-rarefaction wave a {\it 1-rarefaction fan}.

\vspace*{10pt}
{\it Step 2}.\\
In this step, we replace the above constant states  with functions of $x$ and $t$ as follows:

In view of \eqref{transformation}, we construct ${u}^{\varDelta}_1(x,t)$.

We first determine the approximation of $\tilde{z},\tilde{w}$ in \eqref{transformation} 
as follows.
\begin{align*}
	\begin{alignedat}{2}
		\tilde{z}^{\varDelta}_1
		=&z_{\rm L}-
\int^{x_{j-1}}_{x_0}
		\zeta(u^{\varDelta}_{n,0}(x))dx,\;
		\tilde{w}^{\varDelta}_1=w_{\rm L}-
 \int^{x_{j-1}}_{x_0}
		\zeta(u^{\varDelta}_{n,0}(x))dx.
	\end{alignedat}
	\end{align*}
We set
\begin{align}
	\begin{alignedat}{2}
		&\check{z}^{\varDelta}_1(x,t)=&&\tilde{z}^{\varDelta}_1
		+\int^{x_{j-1}}_{x_0}
		\zeta(u^{\varDelta}_{n,0}(x))dx
        +\int^x_{x^{\varDelta}_1}\zeta(u_{\rm L})dy
+\left\{g_1(x,t;u_{\rm L})+V(u_{\rm L})\right\}(t-t_n)
		,\\
		&\check{w}^{\varDelta}_1(x,t)=&&\tilde{w}^{\varDelta}_1
		+ \int^{x_{j-1}}_{x_0}
		\zeta(u^{\varDelta}_{n,0}(x))dx+\int^x_{x^{\varDelta}_1}\zeta(u_{\rm L})dy
+\left\{g_2(x,t;u_{\rm L})+V(u_{\rm L})\right\}
		(t-t_n),
	\end{alignedat}\label{appro1-2}
\end{align}
where $x^{\varDelta}_1=x_{j-1}$, 
\begin{align}
V(u)=q_{\ast}(u)-\alpha m,
\label{V}
\end{align}
$q_{\ast}(u)$ is the flux of $\eta_{\ast}(u)$ defined by
\begin{align*}
q_{\ast}(u)=m\left(\frac12\frac{m^2}{\rho^2}+\frac{\rho^{\gamma-1}}{\gamma-1}\right) 
\end{align*}and $u^{\varDelta}_{n,0}(x)$
is a piecewise constant function defined by
\begin{align}
u^{\varDelta}_{n,0}(x)=
\begin{cases}
u^n_j,\quad &x\in [x_{j-1},x_{j+1})\quad (j\in J_n,\;\text{$n$ is even})	,\vspace*{0.5ex}\\
u^n_j,\quad &x\in [x_{j-1},x_{j+1})\quad (j\in J_n,\;j\ne0,2N_x,\;\;\text{$n$ is odd})	,\vspace*{0.5ex}\\
u^n_0,\quad &x\in [0,x_1)\quad (j=0,\;\;\text{$n$ is odd}),\vspace*{0.5ex}\\
u^n_{2N_x},&x\in [x_{2N_x-1},x_{2N_x})\quad (j=2N_x,\;\;\text{$n$ is odd}).	
\end{cases}
\label{def-u0}
\end{align}

Using $\check{u}^{\varDelta}_1(x,t)$, we next define ${u}^{\varDelta}_1(x,t)$ as follows. 
\begin{align}
	\begin{alignedat}{2}
		&{z}^{\varDelta}_1(x,t)=&&\tilde{z}^{\varDelta}_1
+\int^{x_{j-1}}_{x_0}
		\zeta(u^{\varDelta}_{n,0}(x))dx+\int^x_{x^{\varDelta}_1}
\zeta(\check{u}^{\varDelta}_1(y,t))dy\\&&&	+
\left\{g_1(x,t;\check{u}^{\varDelta}_1)+V(u_{\rm L})\right\}(t-t_n)
		,\\
		&{w}^{\varDelta}_1(x,t)=&&\tilde{w}^{\varDelta}_1
		+\int^{x_{j-1}}_{x_0}
		\zeta(u^{\varDelta}_{n,0}(x))dx+\int^x_{x^{\varDelta}_1}
\zeta(\check{u}^{\varDelta}_1(y,t))dy\\&&&
		+\left\{g_2(x,t;\check{u}^{\varDelta}_1)+V(u_{\rm L})\right\}(t-t_n).
	\end{alignedat}\label{appro1}
\end{align}

\begin{remark}${}$
\begin{enumerate}
\item We notice that approximate solutions ${z}^{\varDelta}_1,{w}^{\varDelta}_1$ 
and $\tilde{z}^{\varDelta}_1,\tilde{w}^{\varDelta}_1$ correspond to 
	$z,w$ and $\tilde{z},\tilde{w}$ in \eqref{transformation}, respectively.
\item 
For $t>t_n$, our approximate solutions will satisfy 
\begin{align}
	\begin{alignedat}{2} \int^{x_{j-1}}_{x_0}&
		\zeta(u^{\varDelta}(x,t_{n+1-}))dx+\int^{t_{n+1}}_{t_n}\sum_{0\leq x\leq x_{j-1}}(\sigma[\eta_{\ast}]-[q_{\ast}])dt\\
		&=\int^{x_{j-1}}_{x_0}
		\zeta(u^{\varDelta}_{n,0}(x))dx+V(u_{\rm L}){\varDelta}t
		+o({\varDelta}x).
	\end{alignedat}
	\label{mass-conservation}	
\end{align}
In \eqref{appro1}, we thus employ the right hand side of \eqref{mass-conservation} 
instead of the left hand side.
\item Our construction of approximate solutions uses the iteration method twice (see \eqref{appro1-2} and \eqref{appro1}) 
to deduce \eqref{iteration}. 
\end{enumerate}

\end{remark}

First, by the implicit function theorem, we determine a propagation speed $\sigma_2$ and $u_2=(\rho_2,m_2)$ such that 
\begin{itemize}
	\item[(1.a)] $z_2:=z(u_2)=z^*_2$
	\item[(1.b)] the speed $\sigma_2$, the left state ${u}^{\varDelta}_1(x_2,t_{n.5})$ and the right state $u_2$ satisfy the Rankine--Hugoniot conditions, i.e.,
	\begin{align*}
		f(u_2)-f({u}^{\varDelta}_1(x^{\varDelta}_2(t_{n.5}),t_{n.5}))=\sigma_2(u_2-{u}^{\varDelta}_1(x^{\varDelta}_2(t_{n.5}),t_{n.5})),
	\end{align*}
\end{itemize}
where $x^{\varDelta}_2(t)
=x_j+
\sigma_2(t-t_n)$. Then we fill up by ${u}^{\varDelta}_1(x)$ the sector where $t_n\leqq{t}<t_{n+1},x_{j-1}\leqq{x}<x^{\varDelta}_2(t)$ (see Figure \ref{case1-1cell}).

\begin{figure}[htbp]
	\begin{center}
		\hspace{-2ex}
		\includegraphics[scale=0.3]{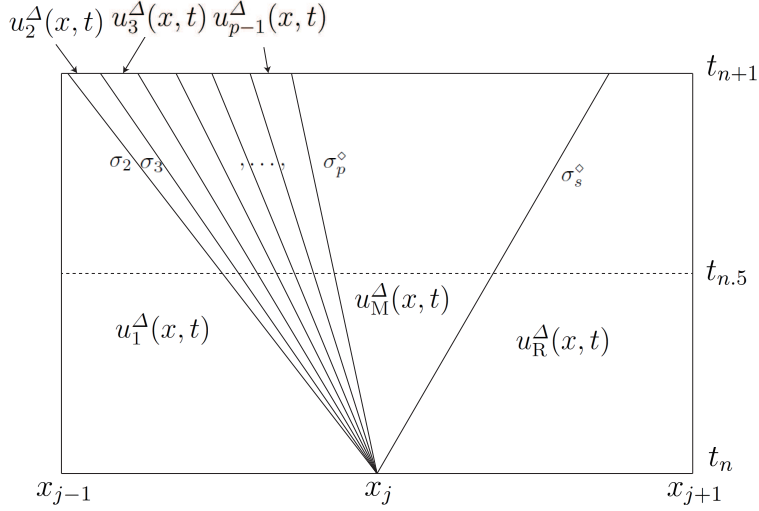}
	\end{center}
	\caption{The approximate solution in the case where a 1-rarefaction and 
		a 2-shock arise in the cell.}
	\label{case1-1cell}
\end{figure}

Assume that $u_k$, ${u}^{\varDelta}_k(x,t)$, a propagation speed $\sigma_k$ and $x^{\varDelta}_{k}(t)$  are defined. Then we similarly determine
$\sigma_{k+1}$ and $u_{k+1}=(\rho_{k+1},m_{k+1})$ such that 
\begin{itemize}
	\item[($k$.a)] $z_{k+1}:=z(u_{k+1})=z^*_{k+1}$,
	\item[($k$.b)] $\sigma_{k}<\sigma_{k+1}$,
	\item[($k$.c)] the speed 
	$\sigma_{k+1}$, 
	the left state ${u}^{\varDelta}_k(x^{\varDelta}_{k+1}(t_{n.5}),t_{n.5})$ and the right state $u_{k+1}$ satisfy 
	the Rankine--Hugoniot conditions, 
\end{itemize}
where $x^{\varDelta}_{k+1}(t)=x_j+\sigma_{k+1}(t-t_n)$. Then we fill up by ${u}^{\varDelta}_k(x,t)$ the sector where $t_n\leqq{t}<t_{n+1},x^{\varDelta}_{k}(t)\leqq{x}<x^{\varDelta}_{k+1}(t)$ (see Figure \ref{case1-1cell}).

We construct ${u}^{\varDelta}_{k+1}(x,t)$ as follows.

We first determine
\begin{align*}
	\begin{alignedat}{2}
		&\tilde{z}^{\varDelta}_{k+1}=&&z_{k+1}-\int^{x_{j-1}}_{x_0}
		\zeta(u^{\varDelta}_{n,0}(x))dx-V(u_{\rm L})\frac{{\varDelta}t}{2}
 -\sum^{k}_{l=1}\int^{x^{\varDelta}_{l+1}(t_{n.5})}_{x^{\varDelta}_l(t_{n.5})}
		\zeta(u^{\varDelta}_l(x,t_{n.5}))dx,
\end{alignedat}
\end{align*}
\begin{align*}
	\begin{alignedat}{2}
&\tilde{w}^{\varDelta}_{k+1}=&&w_{k+1}-\int^{x_{j-1}}_{x_0}
\zeta(u^{\varDelta}_{n,0}(x))dx-V(u_{\rm L})\frac{{\varDelta}t}{2}-\sum^{k}_{l=1}\int^{x^{\varDelta}_{l+1}(t_{n.5})}_{x^{\varDelta}_l(t_{n.5})}
		\zeta(u^{\varDelta}_l(x,t_{n.5}))dx,
	\end{alignedat}
\end{align*}
where $x^{\varDelta}_1(t)=x_{j-1},\;x^{\varDelta}_l(t)=x_j+\sigma_l(t-t_n)\quad
(l=2,3,\ldots,k+1)$ and $t_{n.5}$ is defined in \eqref{terminology}.

We next define $\check{u}^{\varDelta}_{k+1}$ as follows.
\begin{align*}
	\begin{alignedat}{2}
		&\check{z}^{\varDelta}_{k+1}(x,t)=&&\tilde{z}^{\varDelta}_{k+1}+\int^{x_{j-1}}_{x_0}
		\zeta(u^{\varDelta}_{n,0}(x))dx+V(u_{\rm L})(t-t_n)
		+\sum^{k}_{l=1}
		\int^{x^{\varDelta}_{l+1}(t)}_{x^{\varDelta}_l(t)}
		\zeta(u^{\varDelta}_l(x,t))dx\\&&&
		+\int^x_{x^{\varDelta}_{k+1}(t)}
		\zeta(u_{k+1})dy
		+g_1(x,t;u_{k+1})(t-t_{n.5})
		,\end{alignedat}
	\end{align*}\begin{align*}
	\begin{alignedat}{2}
		&\check{w}^{\varDelta}_{k+1}(x,t)=&&\tilde{w}^{\varDelta}_{k+1}
		+ \int^{x_{j-1}}_{x_0}
		\zeta(u^{\varDelta}_{n,0}(x))dx+V(u_{\rm L})(t-t_n)
		+\sum^{k}_{l=1}
		\int^{x^{\varDelta}_{l+1}(t)}_{x^{\varDelta}_l(t)}
		\zeta(u^{\varDelta}_l(x,t))dx\\&&&
		+\int^x_{x^{\varDelta}_{k+1}(t)}
		\zeta(u_{k+1})dy
		+g_2(x,t;u_{k+1})(t-t_{n.5}).
	\end{alignedat}
\end{align*}

Finally, using $\check{u}^{\varDelta}_{k+1}(x,t)$, we define ${u}^{\varDelta}_{k+1}(x,t)$ as follows.

\begin{align}
	\begin{alignedat}{2}
		&{z}^{\varDelta}_{k+1}(x,t)&=&\tilde{z}^{\varDelta}_{k+1}+ \int^{x_{j-1}}_{x_0}
		\zeta(u^{\varDelta}_{n,0}(x))dx+V(u_{\rm L})(t-t_n)
		+\sum^{k}_{l=1}
		\int^{x^{\varDelta}_{l+1}(t)}_{x^{\varDelta}_l(t)}
		\zeta(u^{\varDelta}_l(x,t))dx\\&&&
		+\int^x_{x^{\varDelta}_{k+1}(t)}
		\zeta(\check{u}^{\varDelta}_{k+1}(y,t))dy
		+g_1(x,t;\check{u}^{\varDelta}_{k+1})(t-t_{n.5})
		,\\
		&{w}^{\varDelta}_{k+1}(x,t)&=&\tilde{w}^{\varDelta}_{k+1}
		+ \int^{x_{j-1}}_{x_0}
		\zeta(u^{\varDelta}_{n,0}(x))dx+V(u_{\rm L})(t-t_n)
		+\sum^{k}_{l=1}
		\int^{x^{\varDelta}_{l+1}(t)}_{x^{\varDelta}_l(t)}
		\zeta(u^{\varDelta}_l(x,t))dx\\&&&+\int^x_{x^{\varDelta}_{k+1}(t)}\zeta(
		\check{u}^{\varDelta}_{k+1}(y,t))dy
		+g_2(x,t;\check{u}^{\varDelta}_{k+1})(t-t_{n.5}).
	\end{alignedat}
\label{appr-k}
\end{align}

By induction, we define $u_i$, ${u}^{\varDelta}_i(x,t)$ and $\sigma_i$ $(i=1,\ldots,p-1)$.
Finally, we determine a propagation speed $\sigma_p$ and $u_p=(\rho_p,m_p)$ such that
\begin{itemize}
	\item[($p$.a)] $z_p:=z(u_p)=z^*_p$,
	\item[($p$.b)] the speed $\sigma_p$, 
	and the left state ${u}^{\varDelta}_{p-1}(x^{\varDelta}_{p}(t_{n.5}),t_{n.5})$ and the right state $u_p$ satisfy the Rankine--Hugoniot conditions, 
\end{itemize}where $x^{\varDelta}_{p}(t)=x_j+\sigma_{p}(t-t_n)$. 
We then fill up by ${u}^{\varDelta}_{p-1}(x,t)$ and $u_p$ the sector where
$t_n\leqq{t}<t_{n+1},x^{\varDelta}_{p-1}(t)
\leqq{x}<x^{\varDelta}_{p}(t)$ 
and the line $t_n\leqq{t}<t_{n+1},x=x^{\varDelta}_{p}(t)$, respectively.

Given $u_{\rm L}$ and $z_{\rm M}$ with $z_{\rm L}\leqq{z}_{\rm M}$, we denote 
this piecewise functions of $x$ and $t$ 1-rarefaction wave by 
$R_1^{\varDelta}(u_{\rm L},z_{\rm M},x,t)$.

On the other hand, we construct ${u}^{\varDelta}_{\rm R}(x,t)$ as follows. 

We first set 
\begin{align*}
	\begin{alignedat}{2}
		\tilde{z}^{\varDelta}_{\rm R}
				=z_{\rm R}- \int^{x_{j+1}}_{x_0}
		\zeta(u^{\varDelta}_{n,0}(x))dx,\;
		\tilde{w}^{\varDelta}_{\rm R}
		=w_{\rm R}- \int^{x_{j+1}}_{x_0}
		\zeta(u^{\varDelta}_{n,0}(x))dx.
	\end{alignedat}
\end{align*}
We next construct $\check{u}^{\varDelta}_{\rm R}$
\begin{align*}
\begin{alignedat}{2}
	\check{z}^{\varDelta}_{\rm R}(x,t)&&=&\tilde{z}^{\varDelta}_{\rm R}
+ \int^{x_{j+1}}_{x_0}
		\zeta(u^{\varDelta}_{n,0}(x))dx+V(u_{\rm R})(t-t_n)
	+\int^x_{x_{j+1}}\zeta(u_{\rm R})dy\\&&&	+g_1(x,t;u_{\rm R})(t-t_n)
	,\\
	\check{w}^{\varDelta}_{\rm R}(x,t)&&=&\tilde{w}^{\varDelta}_{\rm R}
	+\int^{x_{j+1}}_{x_0}
		\zeta(u^{\varDelta}_{n,0}(x))dx+V(u_{\rm R})(t-t_n)+\int^x_{x_{j+1}}\zeta(u_{\rm R})dy
	\\&&&+g_2(x,t;u_{\rm R})(t-t_n).
\end{alignedat}
\end{align*}

Using $\check{u}^{\varDelta}_{\rm R}(x,t)$, we define ${u}^{\varDelta}_{\rm R}(x,t)$ as follows. \begin{align}
	\begin{alignedat}{2}
		{z}^{\varDelta}_{\rm R}(x,t)&=&&\tilde{z}^{\varDelta}_{\rm R}
+ \int^{x_{j+1}}_{x_0}
		\zeta(u^{\varDelta}_{n,0}(x))dx+V(u_{\rm R})(t-t_n)+\int^x_{x_{j+1}}\zeta(\check{u}_{\rm R}(y,t))dy\\&&&
		+g_1(x,t;\check{u}_{\rm R})(t-t_n)
		,\\
		{w}^{\varDelta}_{\rm R}(x,t)&=&&\tilde{w}^{\varDelta}_{\rm R}
		+ \int^{x_{j+1}}_{x_0}
		\zeta(u^{\varDelta}_{n,0}(x))dx+V(u_{\rm R})(t-t_n)+\int^x_{x_{j+1}}\zeta(\check{u}_{\rm R}(y,t))dy\\&&&
		+g_2(x,t;\check{u}_{\rm R})(t-t_n).
	\end{alignedat}
\label{appr-R}
\end{align}

Now we fix ${u}^{\varDelta}_{\rm R}(x,t)$ and ${u}^{\varDelta}_{p-1}(x,t)$. 
Let $\sigma_s$ be 
the propagation speed of the 2-shock connecting $u_{\rm M}$ and $u_{\rm R}$.
Choosing ${\sigma}^{\diamond}_p$ near to $\sigma_p$, ${\sigma}^{\diamond}_s$ 
near to 
$\sigma_s$ and $u^{\diamond}_{\rm M}$ near to $u_{\rm M}$, we fill up by ${u}^{\varDelta}_{\rm M}(x,t)$ the gap between $x=x_j+{\sigma}^{\diamond}_{p}
(t-{t}_n)$ and $x=x_j+{\sigma}^{\diamond}_s(t-{t}_n)$, such that 
\begin{itemize}
	\item[(M.a)] $\sigma_{p-1}<\sigma^{\diamond}_p<\sigma^{\diamond}_s$, 
	\item[(M.b)] the speed ${\sigma}^{\diamond}_p$, the left and right states 
	${u}^{\varDelta}_{p-1}(x^{\diamond}_{p},t_{n.5}),{u}^{\varDelta}_{\rm M}(x^{\diamond}_{p},t_{n.5})$ 
	satisfy the Rankine--Hugoniot conditions,
	\item[(M.c)] the speed ${\sigma}^{\diamond}_s$, the left and right 
	states ${u}^{\varDelta}_{\rm M}(x^{\diamond}_{s},t_{n.5}),{u}^{\varDelta}_{\rm R}(x^{\diamond}_{s},t_{n.5})$ satisfy the Rankine--Hugoniot conditions, 
\end{itemize}
where $x^{\diamond}_{p}:=x_j+\sigma^{\diamond}_{p}{\varDelta}
/2$, $x^{\diamond}_s:=x_j+\sigma^{\diamond}_s{\varDelta}
/2$ and ${u}^{\varDelta}_{\rm M}(x,t)$ defined as follows.

We first set
\begin{align*}
	\begin{alignedat}{2}
		\tilde{z}^{\varDelta}_{\rm M}&=&&z^{\diamond}_{\rm M}
		- \int^{x_{j+1}}_{x_0}
		\zeta(u^{\varDelta}_{n,0}\left(x\right))dx- V(u_{\rm R})
		\frac{{\varDelta}t}{2}-\int^{x^{\varDelta}_{\rm R}(t_{n.5})}_{x_{j+1}}
	\zeta(u^{\varDelta}_{\rm R}(x,t_{n.5}))dx,\\	\tilde{w}^{\varDelta}_{\rm M}&=&&w^{\diamond}_{\rm M}-\int^{x_{j+1}}_{x_0}
		\zeta(u^{\varDelta}_{n,0}\left(x\right))dx- V(u_{\rm R})
		\frac{{\varDelta}t}{2}-\int^{x^{\varDelta}_{\rm R}(t_{n.5})}_{x_{j+1}}
		\zeta(u^{\varDelta}_{\rm R}(x,t_{n.5}))dx,
	\end{alignedat}
\end{align*}
where $x^{\varDelta}_{\rm R}(t)=j{\varDelta}x+\sigma_{\rm R}(t-t_n)$.

We construct $\check{u}^{\varDelta}_{\rm M}$
\begin{align*}
	\begin{alignedat}{2}
		\check{z}^{\varDelta}_{\rm M}(x,t)&=&&\tilde{z}^{\varDelta}_{\rm M}
+ \int^{x_{j+1}}_{x_0}
		\zeta(u^{\varDelta}_{n,0}(x))dx+V(u_{\rm R})(t-t_n)+\int^{x^{\varDelta}_{\rm R}(t)}_{x_{j+1}}\zeta(u^{\varDelta}_{\rm R}(x,t))dy
	\\&&&	+\int_{x^{\varDelta}_{\rm R}(t)}^{x}\zeta(u_{\rm M})dy
		+g_1(x,t;u_{\rm M})(t-t_{n.5})
		,\\
		\check{w}^{\varDelta}_{\rm M}(x,t)&=&&\tilde{w}^{\varDelta}_{\rm M}
		+ \int^{x_{j+1}}_{x_0}
		\zeta(u^{\varDelta}_{n,0}(x))dx+V(u_{\rm R})(t-t_n)+\int^{x^{\varDelta}_{\rm R}(t)}_{x_{j+1}}\zeta(u^{\varDelta}_{\rm R}(x,t))dy\\&&&
		+\int_{x^{\varDelta}_{\rm R}(t)}^{x}\zeta(u_{\rm M})dy	+g_2(x,t;u_{\rm M})(t-t_{n.5}).
	\end{alignedat}
\end{align*}

Using $\check{u}^{\varDelta}_{\rm M}(x,t)$, we next define ${u}^{\varDelta}_{\rm M}(x,t)$ as follows. \begin{align}
	\begin{alignedat}{2}
		{z}^{\varDelta}_{\rm M}(x,t)&=&&\tilde{z}^{\varDelta}_{\rm M}
+ \int^{x_{j+1}}_{x_0}
		\zeta(u^{\varDelta}_{n,0}(x))dx+V(u_{\rm R})(t-t_n)+\int^{x^{\varDelta}_{\rm R}(t)}_{x_{j+1}}\zeta(u^{\varDelta}_{\rm R}(x,t))dy\\&&&	
		+\int_{x^{\varDelta}_{\rm R}(t)}^{x}\zeta(\check{u}^{\varDelta}_{\rm M}(y,t)_{\rm M})dy+g_1(x,t;\check{u}^{\varDelta}_{\rm M})(t-t_{n.5})
		,
		\\	{w}^{\varDelta}_{\rm M}(x,t)&=&&\tilde{w}^{\varDelta}_{\rm M}
		+\int^{x_{j+1}}_{x_0}
		\zeta(u^{\varDelta}_{n,0}(x))dx+V(u_{\rm R})(t-t_n)+\int^{x^{\varDelta}_{\rm R}(t)}_{x_{j+1}}\zeta(u^{\varDelta}_{\rm R}(x,t))dy\\&&&	
	+\int_{x^{\varDelta}_{\rm R}(t)}^{x}\zeta(\check{u}^{\varDelta}_{\rm M}(y,t)_{\rm M})dy+g_2(x,t;\check{u}^{\varDelta}_{\rm M})(t-t_{n.5}).
	\end{alignedat}
\label{appr-M}
\end{align}

We denote this approximate Riemann solution, which consists of \eqref{appr-k}, \eqref{appr-R}, \eqref{appr-R}
, by ${u}^{\varDelta}(x,t)$. The validity of the above construction is demonstrated in \cite[Appendix A]{T1}.

\begin{remark}\label{rem:middle-time}\normalfont
	${u}^{\varDelta}(x,t)$ satisfies the Rankine--Hugoniot conditions
	at the middle time of the cell, $t=t_{n.5}$.
\end{remark}


\begin{remark}\label{rem:approximate}\normalfont
	The approximate solution $u^{\varDelta}(x,t)$ is piecewise smooth in each of the 
	divided parts of the cell. Then, in the divided part, $u^{\varDelta}(x,t)$ satisfies
	\begin{align*}
		(u^{\varDelta})_t+f(u^{\varDelta})_x-g(x,u^{\varDelta})=O(\varDelta x).
	\end{align*}
\end{remark}

\section{The $L^{\infty}$ estimate of the approximate solutions}\label{sec:bound}
First aim in this section is to deduce from (\ref{remark2.1}) the following
theorem:
\begin{theorem}\label{thm:bound}
	For $x_{j-1}\leq x\leq x_{j+1}$,
	\begin{align}
		\begin{alignedat}{2}
			&\displaystyle {z}^{\varDelta}(x,t_{n+1-})&\geq&-M_{n+1}-L^n_j
			+\int^x_{x_0}\zeta({u}^{\varDelta}(y,t_{n+1-}))dy-{\it o}({\varDelta}x),\\
			&\displaystyle {w}^{\varDelta}(x,t_{n+1-})
			&\leq& M_{n+1}+L^n_j+\int^x_{x_0}\zeta({u}^{\varDelta}(y,t_{n+1-}))dy+\int^{t_{n+1}}_{t_n}\sum_{y<x_{j-1}}(\sigma[\eta_{\ast}]-[q_{\ast}])dt\\&&&+{\it o}({\varDelta}x),
		\end{alignedat}
		\label{goal}
	\end{align}
	where 
\begin{align}
M_{n+1}=M\left(1-\dfrac{{\varDelta}t}{4}\right)^{n+1},
\label{M-n}
\end{align}
$t_{n+1-}=(n+1){\varDelta}t-0$ and ${\it o}({\varDelta}x)$ depends only on $M$ in \eqref{char}. 
\end{theorem}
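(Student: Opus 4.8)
The plan is to propagate the cell-wise bound \eqref{remark2.1}, which holds at $t_n$ by the very definition of $u^n_j$, one time step forward to $t_{n+1-}$ through the explicit construction of Section~\ref{sec:construction-approximate-solutions}. Throughout I work with the ``tilde'' Riemann invariants of \eqref{transformation}: in these variables \eqref{remark2.1} says precisely that the seed constants of the approximate Riemann solution in the cell start inside the shifted invariant region $S_{inv}$ at level $M_n$, up to the slack $L^n_j$; and \eqref{goal}, once the recurrence formulas \eqref{appro1-2}, \eqref{appro1}, \eqref{appr-k}, \eqref{appr-R}, \eqref{appr-M} are unwound, is the assertion that the tilde invariants at $t_{n+1-}$ lie in the same region at the \emph{contracted} level $M_{n+1}=M_n(1-\varDelta t/4)$, with the surplus entropy term in the $w$-inequality accounting for the one-step loss at the left cell edge.

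The first step is the integral bookkeeping. Each formula for $z^{\varDelta}(x,t_{n+1-})$ inside the cell has the shape $\tilde z^{\varDelta}_{\bullet}+\int^{x_{j-1}}_{x_0}\zeta(u^{\varDelta}_{n,0})\,dx+V(u_{\rm L})\varDelta t+\big(\text{a sum of }\int\zeta\text{ over the intermediate states}\big)+g_1(\check u^{\varDelta}_{\bullet})(t-t_{n.5})$. Using \eqref{mass-conservation} I replace $\int^{x_{j-1}}_{x_0}\zeta(u^{\varDelta}_{n,0})\,dx+V(u_{\rm L})\varDelta t$ by $\int^{x_{j-1}}_{x_0}\zeta(u^{\varDelta}(\cdot,t_{n+1-}))\,dx+\int^{t_{n+1}}_{t_n}\sum_{y<x_{j-1}}(\sigma[\eta_{\ast}]-[q_{\ast}])\,dt+o(\varDelta x)$; since the intermediate-state integrals add up \emph{exactly} to $\int^{x}_{x_{j-1}}\zeta(u^{\varDelta}(\cdot,t_{n+1-}))\,dy$ up to the defect $\check u^{\varDelta}_{\bullet}-u^{\varDelta}$, the formula collapses to $z^{\varDelta}(x,t_{n+1-})=\tilde z^{\varDelta}_{\bullet}+\int^{x}_{x_0}\zeta(u^{\varDelta}(\cdot,t_{n+1-}))\,dy+\int^{t_{n+1}}_{t_n}\sum_{y<x_{j-1}}(\sigma[\eta_{\ast}]-[q_{\ast}])\,dt+g_1(\check u^{\varDelta}_{\bullet})\tfrac{\varDelta t}{2}+o(\varDelta x)$, and similarly for $w^{\varDelta}$ with $g_2$. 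The error terms that must genuinely be $o(\varDelta x)$ rather than merely $O(\varDelta x)$ are: the total entropy production across the $p=O((\varDelta x)^{-\alpha})$ rarefaction-fan discontinuities, each of strength $O((\varDelta x)^{\alpha})$ and hence of cubic entropy order, giving $O((\varDelta x)^{2\alpha})=o(\varDelta x)$ since $\alpha>1/2$; the defect $\check u^{\varDelta}_{\bullet}-u^{\varDelta}$ together with the $O(\varDelta x)$ residual of Remark~\ref{rem:approximate}, upgraded to $o(\varDelta x)$ by the two-stage iteration; and the cut-off in \eqref{remark2.1}. Since the $\int\sum$ term cancels against the surplus in \eqref{goal}, this reduces the theorem to proving for every seed $(z_{\bullet},w_{\bullet})$
\[ \tilde z^{\varDelta}_{\bullet}+g_1(\check u^{\varDelta}_{\bullet})\tfrac{\varDelta t}{2}\geq -M_{n+1}-L^n_j-o(\varDelta x),\qquad \tilde w^{\varDelta}_{\bullet}+g_2(\check u^{\varDelta}_{\bullet})\tfrac{\varDelta t}{2}\leq M_{n+1}+L^n_j+o(\varDelta x). \]

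The heart is the dichotomy of the formal argument applied to these seed inequalities. A property of the Riemann solver for \eqref{force} is that every seed lies, in the $(z,w)$-plane, in the box $[\min(z(u_{\rm L}),z(u_{\rm R})),\infty)\times(-\infty,\max(w(u_{\rm L}),w(u_{\rm R}))]$; combined with \eqref{remark2.1} for $u_{\rm L}=u^n_{j-1}$, $u_{\rm R}=u^n_{j+1}$ this gives $\tilde z^{\varDelta}_{\bullet}\geq -M_n-L^n_j-o(\varDelta x)$ and $\tilde w^{\varDelta}_{\bullet}\leq M_n+L^n_j+o(\varDelta x)$, the bound at level $M_n$. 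I then compare with the contraction gap $M_n-M_{n+1}=M_n\varDelta t/4$ and the source increment $g_i\varDelta t/2$. If the seed already lies below level $M_{n+1}$ (resp.\ above $-M_{n+1}$) by more than $\|g_i\|_{\infty}\varDelta t/2$, a quantity that is $O_M(\varDelta x)$ by \eqref{CFL}, there is nothing to prove; otherwise the relevant tilde invariant is within $O_M(\varDelta x)$ of the boundary of the level-$M_n$ region, so after re-deriving the energy bound \eqref{bound energy} for the discrete solution on $[t_n,t_{n+1}]$ (the Gronwall step, using \eqref{condition2} with $\kappa$ small and the conservation structure of \eqref{mass-conservation}) and checking the geometric conditions $\rho=\tilde\rho$, $\tilde v\geq0$, the decay estimates \eqref{estimate1}--\eqref{estimate2} apply --- with constants harmlessly adjusted since the level is $M_n\in[cM,M]$ --- and give $g_1(\check u^{\varDelta}_{\bullet})>0$, resp.\ $g_2(\check u^{\varDelta}_{\bullet})\leq-\tfrac12 M^{1+\frac{2(\gamma-1)}{\gamma+1}-\varepsilon}$; by \eqref{CFL}, $-g_2\varDelta t/2$ is then at least a fixed positive multiple of $M^{\frac{2(\gamma-1)}{\gamma+1}-\varepsilon}\varDelta x$, which for $M$ large dominates both $M_n\varDelta t/4$ and every $o(\varDelta x)$ error, so the source term alone closes the inequality. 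Running this for each sub-cell in order ($u^{\varDelta}_1,\dots,u^{\varDelta}_{p-1}$, then $u^{\varDelta}_{\rm M}$, then $u^{\varDelta}_{\rm R}$) yields \eqref{goal} for the interior cell, and the boundary cells are analogous.

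The main obstacle is precisely this near-boundary case: $g_1,g_2$ have magnitude $M^{1+\frac{2(\gamma-1)}{\gamma+1}-\varepsilon}$ --- far from negligible --- so the source cannot be treated perturbatively and one must genuinely exploit its sign, and invoking \eqref{estimate1}--\eqref{estimate2} requires first re-establishing the discrete energy bound \eqref{bound energy} on each strip and verifying $\tilde v\geq0$, $\rho=\tilde\rho$ in the perturbed, near-boundary regime, all uniformly in $j$ and in $\varDelta x$ for fixed $M$. A secondary technical point is extracting $o(\varDelta x)$ rather than $O(\varDelta x)$ from the fan entropy production and the two-stage iteration residual, which is where the constraint $\alpha>1/2$ and the conditions on $\beta$ fixed at the start of Section~\ref{subsec:construction-approximate-solutions} are used.
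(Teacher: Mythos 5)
Your proposal is correct and follows essentially the same route as the paper: seed bounds from \eqref{remark2.1}, integral bookkeeping via \eqref{mass-conservation} and the iteration property \eqref{iteration}, a dichotomy between being safely inside the level set (where the contraction $M_n\to M_{n+1}$ is absorbed by the slack) and being near its boundary (where the decay estimates \eqref{estimate1}--\eqref{estimate2}, applied at level $M_n+L^n_j$, make the source term strongly dissipative), followed by induction over the fan states with the error accumulation $p\cdot O((\varDelta x)^{3\alpha-(\gamma-1)\beta})=o(\varDelta x)$ controlled by \eqref{order-p} and the choice of $\alpha,\beta$. The only cosmetic difference is the threshold in the dichotomy ($\sqrt{\varDelta x}$ in the paper versus $O_M(\varDelta x)$ in your version), which does not affect the argument.
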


\vspace*{5pt}
Throughout this paper, by the Landau symbols such as $O({\varDelta}x)$,
$O(({\varDelta}x)^2)$ and $o({\varDelta}x)$,
we denote quantities whose moduli satisfy a uniform bound depending only on $M$ unless we specify them.  

Now, in the previous section, we have constructed 
$u^{\varDelta}(x,t)$ in {\bf Case 1}. When we consider $L^{\infty}$ estimates in this case, main difficulty is to obtain $(\ref{goal})_2$ along $R^{\varDelta}_1$. Therefore, we are concerned with $(\ref{goal})_2$ along $R^{\varDelta}_1$.

\subsection{Estimates of ${w}^{\varDelta}(x,t)$ along $R^{\varDelta}_1$ in Case 1}
In this step, we estimate ${w}^{\varDelta}(x,t)$ along $R^{\varDelta}_1$ in Case 1
of Section 2. We recall that ${u}^{\varDelta}$ along $R^{\varDelta}_1$ consists of ${u}^{\varDelta}_{k}\quad(k=1,2,3,\ldots,p-1)$.
In this case, ${w}^{\varDelta}(x,t)$ 
has the following properties, which is  proved in \cite[Appendix A]{T1}:
\begin{align}
{w}^{\varDelta}_{k+1}
	(x^{\varDelta}_{k+1}(t_{n.5}),t_{n.5})=&w_{k+1}
	={w}^{\varDelta}_k(x^{\varDelta}_{k+1}(t_{n.5}),t_{n.5})+{\it O}(({\varDelta}x)^{3\alpha-(\gamma-1)\beta})\nonumber
	\\&\hspace*{22ex}(k=1,\ldots,p-2),
	\label{w_i-w_{i+1}}
\end{align}
where $t_{n.5}$ is defined in \eqref{terminology}.

We first consider $\tilde{w}^{\varDelta}_1$. We recall that
\begin{align*}
	\begin{alignedat}{2}
		\tilde{w}^{\varDelta}_1=w_{\rm L}- \int^{x_{j-1}}_{x_0}
		\zeta(u^{\varDelta}_{n,0}(x))dx.
	\end{alignedat}
\end{align*}
From \eqref{remark2.1}, we have $\tilde{w}^{\varDelta}_1\leq M_n+L^n_j$.

Since 
\begin{align}
\check{u}^{\varDelta}_1(x,t)={u}^{\varDelta}_1(x,t)+
O(({\varDelta}x)^2),
\label{iteration}
\end{align}
recalling \eqref{mass-conservation}, we have 
\begin{align*}
	\begin{alignedat}{2}
		&{w}^{\varDelta}_1(x,t)&=&\tilde{w}^{\varDelta}_1
		+ \int^{x_{j-1}}_{x_0}
		\zeta(u^{\varDelta}_{n,0}(x))dx+V(u_{\rm L})(t-t_n)+\int^x_{x^{\varDelta}_1}
\zeta(\check{u}^{\varDelta}_1(y,t))dy\\&&&
		+g_2(x,t;\check{u}^{\varDelta})(t-t_{n})\\
		&&\leq&M_n+L^n_j+\int^{x_{j-1}}_{x_0}
		\zeta(u^{\varDelta}_{n,0}(x))dx+V(u_{\rm L})(t-t_n)+\int^x_{x^{\varDelta}_1}
		\zeta(\check{u}^{\varDelta}_1(y,t))dy\\&&&
		+g_2(x,t;{u}^{\varDelta})(t-t_{n})+o({\varDelta}x).
	\end{alignedat}
\end{align*}
If ${w}^{\varDelta}_1(x,t_{n+1-0})<M_n+L^n_j+I^n_j-\sqrt{{\varDelta}x}$, 
from \eqref{mass-conservation} and $M_{n+1}=M_n+O({\varDelta}x)$, we obtain  $\eqref{goal}_2$. Otherwise, 
from the argument \eqref{estimate1}--\eqref{estimate2}, 
regarding $M$ in \eqref{estimate1}--\eqref{estimate2} as $M_n+J^n_j$, 
we have $g_2(x,t;{u}^{\varDelta}_1)\leq-\frac 12(M_n+J^n_j)\leq-\frac 12M_n$. 
From \eqref{mass-conservation}, we conclude $\eqref{goal}_2$.

Next, we assume that \begin{align}
	\begin{alignedat}{2}
		&{w}^{\varDelta}_k(x,t)
		&\leq&M_n+L^n_j+\int^{x_{j-1}}_{x_0}
		\zeta(u^{\varDelta}_{n,0}(x))dx+V(u_{\rm L})(t-t_n)\\&&&+\int^x_{x_{j-1}}
		\zeta({u}^{\varDelta}(y,t))dy+o({\varDelta}x).
		\label{assumption}
	\end{alignedat}
\end{align}

We recall that\begin{align*}
	\begin{alignedat}{2}
		&\tilde{w}^{\varDelta}_{k+1}=&&w_{k+1}-\int^{x_{j-1}}_{x_0}
		\zeta(u^{\varDelta}_{n,0}(x))dx-V(u_{\rm L})\frac{{\varDelta}t}{2}-\sum^{k}_{l=1}\int^{x^{\varDelta}_{l+1}(t_{n.5})}_{x^{\varDelta}_l(t_{n.5})}
		\zeta(u^{\varDelta}_l(x,t_{n.5}))dx.
	\end{alignedat}
\end{align*}

From \eqref{w_i-w_{i+1}} and \eqref{assumption}, 
we have 
\begin{align*}
\tilde{w}^{\varDelta}_{k+1}\leq M_n+L^n_j+k\cdot{\it O}(({\varDelta}x)^{3\alpha-(\gamma-1)\beta})+o({\varDelta}x).	
\end{align*}

From a similar argument to ${w}^{\varDelta}_1$, we have
\begin{align*}
	\begin{alignedat}{2}
		&{w}^{\varDelta}_{k+1}(x,t)&=&\tilde{w}^{\varDelta}_{k+1}
		+\int^{x_{j-1}}_{x_0}
		\zeta(u^{\varDelta}_{n,0}(x))dx+V(u_{\rm L})(t-t_n)
		+\sum^{k}_{l=1}
		\int^{x^{\varDelta}_{l+1}(t)}_{x^{\varDelta}_l(t)}
		\zeta(u^{\varDelta}_l(x,t))dx\\&&&+\int^x_{x^{\varDelta}_{k+1}(t)}
		\zeta(\check{u}^{\varDelta}_{k+1}(y,t))dy
		+g_2(x,t;\check{u}^{\varDelta}_{k+1})(t-t_{n.5})\\
		&&\leq&M_n+L^n_j+\int^{x_{j-1}}_{x_0}
		\zeta(u^{\varDelta}_{n,0}(x))dx+V(u_{\rm L})(t-t_n)+\int^x_{x_{j-1}}\zeta(
		{u}^{\varDelta}(y,t))dy\\&&&+g_2(x,t;\check{u}^{\varDelta}_{k+1})(t-t_{n.5})+k\cdot{\it O}(({\varDelta}x)^{3\alpha-(\gamma-1)\beta})+o({\varDelta}x)\\&&&\quad
		(k=1,2,3,\ldots,p-1).
	\end{alignedat}
\end{align*}
From \eqref{order-p}, since $\left\{3\alpha-(\gamma-1)\beta\right\}p>1$, we conclude $\eqref{goal}_2$.

The remainder in this section is to prove the following theorem. This is important to ensures 
\eqref{remark2.1}.
\begin{theorem}
	\label{thm:average} 
	We assume that $u^{\varDelta}(x,t)$ satisfies \eqref{goal}.

	Then, if $E^{n+1}_j(\rho)\geq({\varDelta}x)^{\delta}$, it holds that
	\begin{align}
		\begin{split} 
			&-M_{n+1}-L^{n+1}_j+I^{n+1}_j
			-{\it o}({\varDelta}x)\leq{z}(E_{j}^{n+1}(u)),\\
			&w(E^{n+1}_j(u))\leq M_{n+1}+L^{n+1}_j+I^{n+1}_j+{\it o}({\varDelta}x),
		\end{split}
		\label{average}
	\end{align}
	where  $j\in J_{n+1}$ and 
	${\it o}({\varDelta}x)$ depends only on $M$  in \eqref{char}. 
\end{theorem}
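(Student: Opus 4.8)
The plan is to transfer the pointwise bounds \eqref{goal} on ${z}^{\varDelta}(x,t_{n+1-})$ and ${w}^{\varDelta}(x,t_{n+1-})$ to the cell averages $E^{n+1}_j(u)$, using the convexity of $\eta_{\ast}$ to absorb the error terms into $L^{n+1}_j$. First I would note that $E^{n+1}_j(u)=\frac1{2{\varDelta}x}\int_{x_{j-1}}^{x_{j+1}}u^{\varDelta}(x,t_{n+1-})dx$ is an average of values that, by \eqref{goal}, almost lie in the (curved) region bounded below by $-M_{n+1}-L^n_j+\int^x_{x_0}\zeta(u^{\varDelta}(y,t_{n+1-}))dy$ and above by the corresponding upper curve plus the discontinuity term. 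The obstacle is that the bounding curves in \eqref{goal} are not straight lines in the $(z,w)$-plane — they involve the nonlinear function $\zeta(u)=\eta_{\ast}(u)-\alpha\rho+K$ integrated in $x$ — so Jensen's inequality does not apply directly; averaging $u$ does not commute with the constraint.

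The key device is the second-order Taylor expansion built into $R^n_j(x)$ and the term $\bigl(1+C_{\gamma}\alpha\int^1_0\rho_0\,dx\bigr)\sum\frac1{2{\varDelta}x}\int^{x_{j+1}}_{x_{j-1}}(x_{j+1}-x)R^n_j(x)dx$ appearing in \eqref{functional discontinuity}. My plan: write $u^{\varDelta}(x,t_{n+1-})=u^{n+1}_j+\bigl(u^{\varDelta}(x,t_{n+1-})-u^{n+1}_j\bigr)$ and expand $\zeta$ and the Riemann invariants $z,w$ to second order about $u^{n+1}_j$. The first-order (linear) parts integrate against the average exactly, so for those terms Jensen/linearity gives the bound with constant $M_{n+1}+L^n_j$; the quadratic remainders are controlled by $\nabla^2\eta_{\ast}$ — this is precisely why $C_{\gamma}$ in \eqref{CGamma} is chosen as a bound relating $\nabla^2 z$, $\nabla^2 w$, $\nabla^2(\alpha\rho)$ to $\nabla^2\eta_{\ast}$ on the relevant range of densities (the density lower bound $E^{n+1}_j(\rho)\geq({\varDelta}x)^{\delta}$ keeps us away from vacuum where $\nabla^2\eta_{\ast}$ degenerates). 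Thus the quadratic error is dominated by a multiple of $\int(x_{j+1}-x)R^{n+1}_j(x)dx$, which is exactly the increment $L^{n+1}_j-L^n_j$ contributes (together with the entropy-defect and Jensen-gap terms, both nonnegative). Combining, the average satisfies the bound with $L^n_j$ replaced by $L^{n+1}_j$, up to an $o({\varDelta}x)$ coming from the $o({\varDelta}x)$ in \eqref{goal} and from replacing $I^n_j$-type Riemann sums by the integrals $I^{n+1}_j$.

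Concretely the steps are: (1) fix $j\in J_{n+1}$ with $E^{n+1}_j(\rho)\geq({\varDelta}x)^{\delta}$, so $u^{n+1}_j$ is bounded away from vacuum and all Taylor expansions are legitimate on $[x_{j-1},x_{j+1}]$; (2) using \eqref{goal}, write the deviation of $z(u^{\varDelta}(x,t_{n+1-}))$ from the lower curve, split off its linear-in-$u$ part and average — here the integral of $\int^x_{x_0}\zeta$ against the cell produces, after integrating by parts in $x$, the quantity $I^{n+1}_j$ plus a quadratic correction; (3) bound every quadratic correction (from expanding $z$, from expanding the $\zeta$-integral, and from the $w$-case symmetrically including the discontinuity term $\int^{t_{n+1}}_{t_n}\sum_{y<x_{j-1}}(\sigma[\eta_{\ast}]-[q_{\ast}])dt$) by $\mathrm{const}\cdot\frac1{2{\varDelta}x}\int^{x_{j+1}}_{x_{j-1}}(x_{j+1}-x)R^{n+1}_j(x)dx$, the constant being $\le C_{\gamma}$ or $C_{\gamma}\alpha\int^1_0\rho_0\,dx$ by the definition \eqref{CGamma}; (4) recognize the sum of these bounds, plus the entropy-production integral and the Jensen gap $\int^1_0\{\eta_{\ast}(u^{\varDelta}(x,t_{(n+1)-}))-\eta_{\ast}(E^{n+1}(x;u))\}dx$, as exactly $L^{n+1}_j-L^n_j\ge0$; (5) finally account for $M_{n+1}$ versus $M_n$ via \eqref{M-n} (the extra factor only helps, being $\le M_n$) and collect all lower-order discrepancies into $o({\varDelta}x)$, which by construction depends only on $M$. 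The main obstacle is step (3): getting the quadratic error constants to actually be $\le C_{\gamma}$ (respectively $\le C_{\gamma}\alpha\int\rho_0$) uniformly on the density range $(\varDelta x)^{\delta}\le\rho\le(\theta M)^{1/\theta}$, i.e.\ verifying that $\nabla^2 z$, $\nabla^2 w$, and $\alpha\nabla^2\rho$ are pointwise controlled by the stated multiples of $\nabla^2\eta_{\ast}$; this is a direct but delicate computation with the Hessians of the Riemann invariants and of $\eta_{\ast}$, and it is where the precise form of $C_{\gamma}$ in \eqref{CGamma} and the restriction $\gamma\in(1,5/3]$ get used.
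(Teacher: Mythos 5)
Your skeleton does match the paper's at a high level: linearize the constraint $\zeta(u)=\eta_{\ast}(u)-\alpha\rho+K$ about $u^{n+1}_j$, absorb the second–order Taylor remainder $R^{n+1}_j$ into the increment $L^{n+1}_j-L^{n}_j$ with the constant $1+C_{\gamma}\alpha\int^1_0\rho_0\,dx$ from \eqref{functional discontinuity}, and use the nonnegative Jensen–gap term $\int\{\eta_{\ast}(u^{\varDelta})-\eta_{\ast}(E^{n+1}(\cdot;u))\}dx$. But two essential steps are missing. First, even after linearization the upper bound — the function $A(x,t_{n+1-0})$ of \eqref{def-A} — is still a genuinely $x$-dependent function, while $w(E^{n+1}_j(u))$ is a \emph{nonlinear} function of the cell averages $E_j(\rho),E_j(m)$; "Jensen/linearity" does not yield $w(E_j^{n+1}(u^{\varDelta}_{\dagger}))\le\bar{A}_j+o({\varDelta}x)$. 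The standard Jensen argument for Riemann-invariant regions only works when the bound is constant on the cell. The paper needs Lemma \ref{lem:average1} (all of Appendix A) precisely for this: a weighted Jensen inequality obtained by rescaling $\rho^{\varDelta}_{\dagger}=\hat{\rho}\,A^{2/(\gamma-1)}$, $m^{\varDelta}_{\dagger}=\hat{m}\,A^{(\gamma+1)/(\gamma-1)}$ and expanding in the auxiliary exponent $\mu$ of \eqref{mu}. Your proposal offers no substitute for this lemma, and without it the passage from the pointwise bound \eqref{goal} to the averaged bound \eqref{average} does not go through.

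Second, your step (3) — bounding "every quadratic correction" by a multiple of $\frac1{2{\varDelta}x}\int^{x_{j+1}}_{x_{j-1}}(x_{j+1}-x)R^{n+1}_j(x)dx$ — fails for the term $B_1$ in \eqref{lemma 3.4}. The linearization coefficient $a^{n+1}_j$ is chosen so that the residual $\Gamma^{n+1}_j$ contains the \emph{first-order} term $v^{n+1}_j\rho^{\varDelta}(y,t_{n+1-})\bigl(w(y,t_{n+1-})-w^{n+1}_j\bigr)$, which is linear in $w-w^{n+1}_j$, has no sign, and is not dominated by any Hessian of $\eta_{\ast}$. The paper disposes of it by the measure dichotomy on $S=\{x:\,w^{\varDelta}\le M_{n+1}+L^{n+1}_j+I^{n+1}_j-({\varDelta}x)^{1/4}\}$: either $\mu(S)/(2{\varDelta}x)\ge({\varDelta}x)^{1/4}$, in which case $w^{n+1}_j$ already sits $({\varDelta}x)^{1/2}/2$ below the bound and the $O({\varDelta}x)$ terms are harmless, or $\mu(S)$ is small and then $|B_1|=o({\varDelta}x)$ (Lemma \ref{lem:average2}). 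Relatedly, the factor $\alpha\int^1_0\rho_0\,dx$ in the constant does not arise from comparing $\alpha\nabla^2\rho$ with $\nabla^2\eta_{\ast}$ as you suggest, but from the bound $-v^{n+1}_j\le-I^{n+1}_j\le\alpha\int^1_0\rho_0\,dx+o({\varDelta}x)$ used to control the sign-indefinite coefficient $v^{n+1}_j$ of $B_2$ when $v^{n+1}_j<0$; only the comparison of the $\rho^{\theta-1}$- and $\rho^{\gamma-2}$-moments in $B_2$ versus $R^{n+1}_j$ uses $C_{\gamma}$ of \eqref{CGamma}. Without these two devices the proposed argument does not close.
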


{\it Proof of Theorem \ref{thm:average}.}
 For $x\in[x_{j-1},x_{j+1}]$, we set 
\begin{align*}
{z}^{\varDelta}_{\dagger}(x,t_{n+1-})=&{z}^{\varDelta}(x,t_{n+1-})-\int^x_{x_0}
\zeta\left({u}^{\varDelta}(y,t_{n+1-})\right)dy+\int^x_{x_{j-1}}
\eta_{\ast}\left({u}^{\varDelta}(y,t_{n+1-})\right)dy\\& 
-\int^x_{x_{j-1}}a^{n+1}_j{\rho}^{\varDelta}(y,t_{n+1-})dy
+\int^x_{x_{j-1}}Kdy,\\
{w}^{\varDelta}_{\dagger}(x,t_{n+1-})=&{w}^{\varDelta}(x,t_{n+1-})-\int^x_{x_0}
\zeta\left({u}^{\varDelta}(y,t_{n+1-})\right)dy+\int^x_{x_{j-1}}
\eta_{\ast}\left({u}^{\varDelta}(y,t_{n+1-})\right)dy \\&
-\int^x_{x_{j-1}}a^{n+1}_j{\rho}^{\varDelta}(y,t_{n+1-})dy
+\int^x_{x_{j-1}}Kdy,
\end{align*}
where $a^{n+1}_j=\dfrac{\partial\eta_{\ast}}{\partial\rho}(u^{n+1}_j)+
\dfrac{\partial\eta_{\ast}}{\partial m}(u^{n+1}_j)\left\{v^{n+1}_j-\left(\rho^n_j\right)^{\theta}\right\}$.

Then we notice that 
\begin{align*}
{\rho}^{\varDelta}_{\dagger}(x,t_{n+1-})=&{\rho}^{\varDelta}(x,t_{n+1-}),\\
{v}^{\varDelta}_{\dagger}(x,t_{n+1-})=&{v}^{\varDelta}(x,t_{n+1-})-\int^x_{x_0}
\zeta\left({u}^{\varDelta}(y,t_{n+1-})\right)dy+\int^x_{x_{j-1}}
\eta_{\ast}\left({u}^{\varDelta}(y,t_{n+1-})\right)dy \\&
-\int^x_{x_{j-1}}a^{n+1}_j{\rho}^{\varDelta}(y,t_{n+1-})dy
+\int^x_{x_{j-1}}Kdy.
\end{align*}

Since $L^n_j$ is positive, $\eqref{average}_2$ is more difficult than $\eqref{average}_1$. 
We thus treat with only $\eqref{average}_2$ in this proof.

\begin{align*}w(E^{n+1}_j(u))
=&\frac{\displaystyle \frac1{2{\varDelta}x}\int^{x_{j+1}}_{x_{j-1}}m^{\varDelta}(x,t_{n+1-})dx+\left(\frac1{2{\varDelta}x}\int^{x_{j+1}}_{x_{j-1}}{\rho}^{\varDelta}(x,t_{n+1-})dx\right)^{\theta}/\theta
}{\displaystyle \frac1{2{\varDelta}x}\int^{x_{j+1}}_{x_{j-1}}{\rho}^{\varDelta}(x,t_{n+1-})dx}
\\=&\frac{\displaystyle\frac1{2{\varDelta}x}\int^{x_{j+1}}_{x_{j-1}}{m}^{\varDelta}_{\dagger}(x,t_{n+1-})dx+\left(\frac1{2{\varDelta}x}\int^{x_{j+1}}_{x_{j-1}}{\rho}^{\varDelta}_{\dagger}(x,t_{n+1-})dx\right)^{\theta}/\theta
}{\displaystyle\frac1{2{\varDelta}x}\int^{x_{j+1}}_{x_{j-1}}{\rho}^{\varDelta}_{\dagger}(x,t_{n+1-})dx}\\
&+\frac{\displaystyle \frac1{2{\varDelta}x}\int^{x_{j+1}}_{x_{j-1}}{\rho}^{\varDelta}(x,t_{n+1-})\left\{	\int^{x_{j-1}}_{x_0}
	\eta_{\ast}\left({u}^{\varDelta}(y,t_{n+1-})\right)dy
\right\}dx
}{\displaystyle \frac1{2{\varDelta}x}\int^{x_{j+1}}_{x_{j-1}}{\rho}^{\varDelta}(x,t_{n+1-})dx}\\
&-\frac{\displaystyle \frac1{2{\varDelta}x}\int^{x_{j+1}}_{x_{j-1}}{\rho}^{\varDelta}(x,t_{n+1-})\left\{\left(\alpha-a^{n+1}_j\right)	\int^x_{x_0}
	{\rho}^{\varDelta}(y,t_{n+1-})dy                \right\}dx
}{\displaystyle \frac1{2{\varDelta}x}\int^{x_{j+1}}_{x_{j-1}}{\rho}^{\varDelta}(x,t_{n+1-})dx}
\\
&+\frac{\displaystyle \frac1{2{\varDelta}x}\int^{x_{j+1}}_{x_{j-1}}{\rho}^{\varDelta}(x,t_{n+1-})\left\{
	\int^{x_{j-1}}_{x_0}
	Kdy\right\}dx
}{\displaystyle \frac1{2{\varDelta}x}\int^{x_{j+1}}_{x_{j-1}}{\rho}^{\varDelta}(x,t_{n+1-})dx}
\\
=&A_1+A_2+A_3+A_4.	
\end{align*}

Considering $A_3$, we have
\begin{align*}
&\frac1{2{\varDelta}x}\int^{x_{j+1}}_{x_{j-1}}{\rho}^{\varDelta}(x,t_{n+1-})\left\{\left(\alpha-a^{n+1}_j\right)	\int^x_{x_0}
{\rho}^{\varDelta}(y,t_{n+1-})dy \right\}dx\\&=\frac1{2{\varDelta}x}\int^{x_{j+1}}_{x_{j-1}}{\rho}^{\varDelta}(x,t_{n+1-})dx\times\left(\alpha-a^{n+1}_j\right)	\int^{x_{j-1}}_{x_0}
{\rho}^{\varDelta}(y,t_{n+1-})dx \\
&\quad+\frac1{2{\varDelta}x}\int^{x_{j+1}}_{x_{j-1}}{\rho}^{\varDelta}(x,t_{n+1-})\left\{\left(\alpha-a^{n+1}_j\right)	\int^x_{x_{j-1}}
{\rho}^{\varDelta}(y,t_{n+1-})dy \right\}dx\\
&=A_{31}+A_{32}.
\end{align*}From the integration by parts, we have
\begin{align*}
A_{32}=&\frac1{2{\varDelta}x}\int^{x_{j+1}}_{x_{j-1}}{\rho}^{\varDelta}(x,t_{n+1-})dx\times\left(\alpha-a^{n+1}_j\right)	\int^{x_{j+1}}_{x_{j-1}}
{\rho}^{\varDelta}(y,t_{n+1-})dx\\
&-\frac1{2{\varDelta}x}\int^{x_{j+1}}_{x_{j-1}}\left\{\int^x_{x_{j-1}}
{\rho}^{\varDelta}(y,t_{n+1-})dy \right\}\left(\alpha-a^{n+1}_j\right) {\rho}^{\varDelta}(x,t_{n+1-})dx\\
=&\frac1{2{\varDelta}x}\int^{x_{j+1}}_{x_{j-1}}{\rho}^{\varDelta}(x,t_{n+1-})dx\times\left(\alpha-a^{n+1}_j\right)	\int^{x_{j+1}}_{x_{j-1}}
{\rho}^{\varDelta}(x,t_{n+1-})dx-A_{32}.
\end{align*}
We thus obtain 
\begin{align*}
	A_{32}=&\frac12\times
	\frac1{2{\varDelta}x}\int^{x_{j+1}}_{x_{j-1}}{\rho}^{\varDelta}(x,t_{n+1-})dx\times\left(\alpha-a^{n+1}_j\right)	\int^{x_{j+1}}_{x_{j-1}}
	{\rho}^{\varDelta}(y,t_{n+1-})dx.
\end{align*}

Therefore, we obtain 
\begin{align}
\begin{alignedat}{1}
w(E^{n+1}_j(u))
=&\frac{\displaystyle\frac1{2{\varDelta}x}\int^{x_{j+1}}_{x_{j-1}}{m}^{\varDelta}_{\dagger}(x,t_{n+1-})+\left(\frac1{2{\varDelta}x}\int^{x_{j+1}}_{x_{j-1}}{\rho}^{\varDelta}_{\dagger}(x,t_{n+1-})dx\right)^{\theta}/\theta
}{\displaystyle\frac1{2{\varDelta}x}\int^{x_{j+1}}_{x_{j-1}}{\rho}^{\varDelta}_{\dagger}(x,t_{n+1-})dx}\\
&+\int^{x_{j-1}}_{x_0}
\eta_{\ast}\left({u}^{\varDelta}_{n+1,0}(x)\right)dx
\\&-\alpha	\int^{x_{j-1}}_{x_0}
{\rho}^{\varDelta}(x,t_{n+1-})dx-\frac{\alpha-a^{n+1}_j}2	\int^{x_{j+1}}_{x_{j-1}}
{\rho}^{\varDelta}(x,t_{n+1-})dx\\&+Kx_{j-1}+\int^{x_{j-1}}_{x_{0}}\left\{\eta_{\ast}\left({u}^{\varDelta}(x,t_{n+1-})\right)-\eta_{\ast}\left({u}^{\varDelta}_{n+1,0}(x)\right)\right\}
dx.
\end{alignedat}\label{lemma 4.1}
	\end{align}

Here we introduce the following lemma. The proof is postponed to Appendix A. 
\begin{lemma}\label{lem:average1}
	If
	\begin{align}
		\begin{alignedat}{1}
			\frac1{2{\varDelta}x}\int_{x_{j-1}}
			^{x_{j+1}}{\rho}^{\varDelta}_{\dagger}(x,t_{n+1-0})
			dx\geq ({\varDelta}x)^{\delta}
		\end{alignedat}
		\label{assumption-average2}	
	\end{align}
and \begin{align}
	{w}^{\varDelta}_{\dagger}(x,t_{n+1-0})\leq& M_{n+1}+L^n_j+\int^x_{x_{j-1}}
	\eta_{\ast}\left({u}^{\varDelta}(y,t_{n+1-0})\right)dy-\int^x_{x_{j-1}}a^{n+1}_j{\rho}^{\varDelta}(y,t_{n+1-0})dy\nonumber 
	\\&+\int^x_{x_{j-1}}
	Kdy+\int^{t_{n+1}}_{t_n}\sum_{y<x_{j-1}}(\sigma[\eta_{\ast}]-[q_{\ast}])dt+o({\varDelta}x)\nonumber\\
	=&:
	A(x,t_{n+1-0})+o({\varDelta}x)\hspace*{2.0ex}(x\in [x_{j-1},x_{j+1}]),
	\label{def-A}
\end{align}
	the following holds
	\begin{align*}
		w(E_j^{n+1}({u}^{\varDelta}_{\dagger}))
		\leq 
		\bar{A}_j(t_{n+1-0})+o({\varDelta}x),
	\end{align*}
where $\displaystyle E_j^{n+1}({u}^{\varDelta}_{\dagger})=\frac1{2{\varDelta}x}\int^{x_{j+1}}_{x_{j-1}}
u^{\varDelta}_{\dagger}(x,t_{n+1-0})dx,\; \bar{A}_j(t_{n+1-0})=\frac1{2{\varDelta}x}\int^{x_{j+1}}_{x_{j-1}}A(x,t_{n+1-0})dx
$.
\end{lemma}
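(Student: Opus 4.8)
### Proof plan for Lemma \ref{lem:average1}

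\emph{Strategy.} The claim reduces the cell-average bound on $w$ to a "pointwise-implies-average" statement for the tilde-dagger variables, where the dangerous extra term $\int^x_{x_{j-1}}\eta_*-a^{n+1}_j\rho+K\,dy$ is \emph{affine up to convexity corrections}. The plan is to write $w(E_j^{n+1}(u_\dagger))$ as a ratio of averages of $m_\dagger$ and $\rho_\dagger$ plus a power of the average of $\rho_\dagger$, and to compare it termwise with $\bar A_j(t_{n+1-0})$, which is itself (by definition) the cell-average of the right-hand side $A(x,t_{n+1-0})$. The only genuine inequality I need is convexity/Jensen applied to the map $(\rho,m)\mapsto m/\rho + (\text{avg }\rho)^\theta/\theta$-type expressions, together with the assumption \eqref{assumption-average2} that keeps $\rho_\dagger$ away from vacuum so all the manipulations (division by $\rho_\dagger$, expanding $\rho_\dagger^\theta$) stay uniformly controlled with errors $o(\varDelta x)$ depending only on $M$.

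\emph{Steps, in order.} First I would record that $\rho_\dagger = \rho^\varDelta$ on $[x_{j-1},x_{j+1}]$ and that, by \eqref{assumption-average2}, $\frac1{2\varDelta x}\int_{x_{j-1}}^{x_{j+1}}\rho_\dagger\,dx\ge(\varDelta x)^\delta$ with $1<\delta<1/(2\theta)$, so that the average density is large compared with any $(\varDelta x)$-power error that will appear; in particular $\left(\text{avg }\rho_\dagger\right)^\theta$ and $\left(\text{avg }\rho_\dagger\right)^{\theta-1}$ differ from their "pointwise" counterparts by $o(\varDelta x)$-controlled amounts. Second, I would split
\[
w(E_j^{n+1}(u_\dagger))=\frac{\overline{m_\dagger}}{\overline{\rho_\dagger}}+\frac{(\overline{\rho_\dagger})^\theta}{\theta}
=\overline{\left(\frac{m_\dagger}{\rho_\dagger}\right)}+\Big(\text{correction}\Big)+\frac{(\overline{\rho_\dagger})^\theta}{\theta},
\]
where the correction measures the gap between the average of $m_\dagger/\rho_\dagger$ and the ratio of averages; this is exactly a density-weighted variance term and is nonpositive after we use that $v_\dagger = m_\dagger/\rho_\dagger$ together with the concavity structure of the map — this is the same computation as the $A_{32}$ integration-by-parts trick already carried out in the main text, so I would invoke that identity rather than redo it. Third, I would bound $(\overline{\rho_\dagger})^\theta/\theta$ by $\overline{(\rho_\dagger^\theta/\theta)}$ via Jensen (here $0<\theta\le 1/3<1$, so $s\mapsto s^\theta$ is concave — wait, that gives the wrong direction, so instead I use that along $R_1^\varDelta$ the relevant quantity is controlled by $z_\dagger+\text{something}$, i.e. I rewrite $\rho_\dagger^\theta/\theta = (w_\dagger-z_\dagger)/2$ pointwise and use the pointwise upper bound \eqref{def-A} on $w_\dagger$ plus the lower bound on $z_\dagger$ from $\eqref{goal}_1$). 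Fourth, I substitute the pointwise hypothesis \eqref{def-A} for $w_\dagger(x,t_{n+1-0})$ into $\overline{v_\dagger}=\overline{m_\dagger/\rho_\dagger}$, i.e. integrate the inequality $w_\dagger\le A(\cdot,t_{n+1-0})+o(\varDelta x)$ against the probability measure $\frac{\rho^\varDelta\,dx}{\int\rho^\varDelta\,dx}$ on $[x_{j-1},x_{j+1}]$, and compare with the uniform (Lebesgue) average defining $\bar A_j$; the difference between the two averaging measures is again $O(\varDelta x)$ because $\rho^\varDelta$ has bounded oscillation over a single cell (width $2\varDelta x$, bound depending only on $M$). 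Collecting the four pieces yields $w(E_j^{n+1}(u_\dagger))\le \bar A_j(t_{n+1-0})+o(\varDelta x)$.

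\emph{Main obstacle.} The delicate point is the third/fourth steps: the term $\int^x_{x_{j-1}}\eta_*(u^\varDelta)-a^{n+1}_j\rho^\varDelta+K\,dy$ inside $A(x,\cdot)$ is nonlinear in the solution, and $w(E_j^{n+1}(u_\dagger))$ involves the average of $u^\varDelta$, not a nonlinear function of it; so I must show that replacing $\eta_*(u^\varDelta)$ by $\eta_*$ of the cell average (and $\rho^\varDelta$ likewise) costs only $o(\varDelta x)$. This is where the choice of $a^{n+1}_j$ as the gradient of $\eta_*$ at $u^{n+1}_j$ is essential: $\eta_*(u^\varDelta)-a^{n+1}_j\rho^\varDelta$ is, up to a linear-in-$m$ term and a second-order Taylor remainder $R^{n+1}_j$, an affine function of $u^\varDelta$, so its cell average equals the same affine function of the average plus $\int(1-\tau)\,{}^t(\cdot)\nabla^2\eta_*(\cdot)(\cdot)\,d\tau$-type remainders; these remainders are precisely the $R^n_j$ absorbed into $L^n_j$ in \eqref{functional discontinuity}, which is why the hypothesis carries $L^n_j$ on the right and the conclusion is allowed the same $L^n_j$. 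Handling this remainder bookkeeping cleanly — showing the quadratic terms are $O((\varDelta x)^2)\cdot(\text{number of cells})=o(1)$ and, crucially for \eqref{average}, that the per-cell contribution is $o(\varDelta x)$ given the non-vacuum lower bound \eqref{assumption-average2} — is the technical heart of the argument and the reason the proof is deferred to Appendix A.
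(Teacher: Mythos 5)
Your overall shape is right --- write $w(E_j^{n+1}(u^{\varDelta}_{\dagger}))$ as a ratio of cell averages, use convexity of $s\mapsto s^{\theta+1}$ (equivalently, substitute $m_{\dagger}\le \rho_{\dagger}A-\rho_{\dagger}^{\theta+1}/\theta$ pointwise), and compare with $\bar A_j$. But there is a genuine gap at your fourth step. After substituting the pointwise bound you are left with the density-weighted average $\overline{\rho_{\dagger}A}\,/\,\overline{\rho_{\dagger}}$, and its discrepancy from the Lebesgue average $\bar A_j$ is the normalized covariance $\overline{\rho_{\dagger}\,r}\,/\,\overline{\rho_{\dagger}}$ with $r=A-\bar A_j$. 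Since $\sup|r|=O({\varDelta}x)$ and $\rho_{\dagger}$ has $O(1)$ oscillation inside a cell, this covariance is only $O({\varDelta}x)$, \emph{not} $o({\varDelta}x)$ as the lemma asserts --- and the distinction matters, because these per-cell errors are summed over $N_t\sim 1/{\varDelta}t$ time steps in the proof of Theorem \ref{thm:average}, so an un-cancelled $O({\varDelta}x)$ term accumulates to $O(1)$ and destroys the invariant-region induction. Your remark that ``the difference between the two averaging measures is $O({\varDelta}x)$'' concedes exactly the order that must be improved. The situation is further complicated by near-vacuum cells: \eqref{assumption-average2} only guarantees $\overline{\rho_{\dagger}}\ge({\varDelta}x)^{\delta}$ with $\delta>1$, so any estimate that divides by $\overline{\rho_{\dagger}}$ without exploiting cancellation loses powers of ${\varDelta}x$.

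The paper's proof is engineered precisely to cancel these first-order covariance terms. It normalizes multiplicatively, $\rho^{\varDelta}_{\dagger}=\hat\rho\,A^{2/(\gamma-1)}$ and $m^{\varDelta}_{\dagger}=\hat m\,A^{(\gamma+1)/(\gamma-1)}$, so that $w(\hat u)\le 1+o({\varDelta}x)$; it then expands both numerator and denominator in powers of $r(x,T)=A-\bar A_j$, introduces the exponent $\mu$ (with $|\mu|\le C({\varDelta}x)^{-\theta\delta-\varepsilon}$, controlled because $\theta\delta<1/2$), and applies a \emph{weighted} Jensen inequality in Step 3 whose first-order correction exactly matches and cancels the $r$-covariance produced by expanding $\{ \bar A_j\}^{-1}$ in the denominator --- see \eqref{lemma3.1-5}, \eqref{lemma3.1-13} and \eqref{lemma3.1-10}. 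Your proposal contains no analogue of this cancellation mechanism, so as written it proves the inequality only up to $O({\varDelta}x)$. A secondary point: the Taylor remainders $R^{n}_j$ and the functional $L^n_j$ that you invoke as ``the technical heart'' do not enter this lemma at all; that bookkeeping belongs to Lemma \ref{lem:average2} and to the passage from \eqref{lemma 4.1 2} to \eqref{average}, whereas Lemma \ref{lem:average1} is purely the weighted-Jensen statement for $w$ of the cell average.
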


It follows from Theorem \ref{thm:bound} and this lemma that 
\begin{align}
\begin{alignedat}{1}
	w(E^{n+1}_j(u))
\leq&M_{n+1}+L^n_j+I^{n+1}_j+\int^{t_{n+1}}_{t_n}\sum_{y<x_{j-1}}(\sigma[\eta_{\ast}]-[q_{\ast}])dt
\\
&+\int^{x_{j-1}}_{x_{0}}\left\{\eta_{\ast}\left({u}^{\varDelta}(x,t_{n+1-})\right)-\eta_{\ast}\left({u}^{\varDelta}_{n+1,0}(x)\right)\right\}
dx\\
&+\frac1{2{\varDelta}x}\int^{x_{j+1}}_{x_{j-1}}
\int^x_{x_{j-1}}\left\{\eta_{\ast}\left({u}^{\varDelta}(y,t_{n+1-})\right)
-\eta_{\ast}\left({u}^{n+1}_j\right)\right\}dy	dx\\
&-\frac1{2{\varDelta}x}\int^{x_{j+1}}_{x_{j-1}}\int^x_{x_{j-1}}a^{n+1}_j\left({\rho}^{\varDelta}(y,t_{n+1-})-\rho^{n+1} _j \right)dydx
+o({\varDelta}x).
\end{alignedat}
\label{lemma 4.1 2}
\end{align}

To complete the proof of Theorem \ref{thm:average}, we must investigate
\begin{align*}
		\Gamma^{n+1}_j(y)=&\eta_{\ast}({u}^{\varDelta}(y,t_{n+1-}))-\eta_{\ast}(u^{n+1}_j)-a^{n+1}_j\left({\rho}^{\varDelta}(y,t_{n+1-})-\rho^{n+1}_j\right)
\end{align*}
in \eqref{lemma 4.1 2}, where 
\begin{align*}
 a^{n+1}_j=&\dfrac{\partial\eta_{\ast}}{\partial\rho}(u^{n+1}_j)+
 \dfrac{\partial\eta_{\ast}}{\partial m}(u^{n+1}_j)\left\{v^{n+1}_j-\left(\rho^{n+1}_j\right)^{\theta}\right\}.
\end{align*}
From the Taylor expansion, we have
	\begin{align}
		\begin{alignedat}{2}
			\eta_{\ast}\left({u}^{\varDelta}(y,t_{n+1-})\right)
			-
			\eta_{\ast}\left(u^{n+1}_j\right)=&
			\nabla\eta_{\ast}(u^{n+1}_j)\left({u}^{\varDelta}(y,t_{n+1-})-u^{n+1}_j\right)\\&
			+\int^1_0(1-\tau)\cdot{}^t\left({u}^{\varDelta}(y,t_{n+1-})-u^{n+1}_j\right)
			\\&\times\nabla^2\eta_{\ast}\left(u^{n+1}_j+\tau\left\{{u}^{\varDelta}(y,t_{n+1-})-u^{n+1}_j\right\}\right)d\tau\\&\times\left({u}^{\varDelta}(y,t_{n+1-})-u^{n+1}_j\right)\\
			=&\nabla\eta_{\ast}(u^{n+1}_j)\left({u}^{\varDelta}(y,t_{n+1-})-u^{n+1}_j\right)+R^{n+1}_j(y),
		\end{alignedat}
		\label{Taylor}	
	\end{align}where 
\begin{align*}
R^{n+1}_j(y)=&\int^1_0(1-\tau)\cdot{}^t\left({u}^{\varDelta}(y,t_{n+1-})-u^{n+1}_j\right)
	\nabla^2\eta_{\ast}\left(u^{n+1}_j+\tau\left\{{u}^{\varDelta}(y,t_{n+1-})-u^{n+1}_j\right\}\right)\\&\times\left({u}^{\varDelta}(y,t_{n+1-})-u^{n+1}_j\right)d\tau.
\end{align*}

We then deduce that
\begin{align*}
	\begin{alignedat}{2}
		\Gamma^{n+1}_j(y)=&
		\dfrac{\partial\eta_{\ast}}{\partial m}(u^{n+1}_j){\rho}^{\varDelta}(y,t_{n+1-})\left(w(y,t_{n+1-})-w^{n+1}_j\right)\\&-\dfrac{\partial\eta_{\ast}}{\partial m}(u^{n+1}_j)\int^1_0(1-\tau)(\theta+1)\left(\rho^{n+1}_j+\tau\left\{{\rho}^{\varDelta}(y,t_{n+1-})-\rho^{n+1}_j\right\}\right)^{\theta-1}d\tau\\
		&\times\left({\rho}^{\varDelta}(y,t_{n+1-})-\rho^{n+1}_j\right)^2+R^{n+1}_j(y)\\
		=&
		v^{n+1}_j{\rho}^{\varDelta}(y,t_{n+1-})\left(w(y,t_{n+1-})-w^{n+1}_j\right)\\&-v^{n+1}_j\int^1_0(1-\tau)(\theta+1)\left(\rho^{n+1}_j+\tau\left\{{\rho}^{\varDelta}(y,t_{n+1-})-\rho^{n+1}_j\right\}\right)^{\theta-1}d\tau\\
		&\times\left({\rho}^{\varDelta}(y,t_{n+1-})-\rho^{n+1}_j\right)^2+R^{n+1}_j(y).
	\end{alignedat}
\end{align*}

We thus obtain
\begin{align}
\begin{alignedat}{2}
\frac1{2{\varDelta}x}&\int^{x_{j+1}}_{x_{j-1}}\int^x_{x_{j-1}}
\Gamma^{n+1}_j(y,u^{n+1}_j)dydx
=\frac1{2{\varDelta}x}\int^{x_{j+1}}_{x_{j-1}}(x_{j+1}-x)\Gamma^{n+1}_j(x,u^{n+1}_j)dx\\
=&\frac1{2{\varDelta}x}\int^{x_{j+1}}_{x_{j-1}}(x_{j+1}-x)v^{n+1}_j{\rho}^{\varDelta}(y,t_{n+1-})\left(w(y,t_{n+1-})-w^{n+1}_j\right)dx\\
&-\frac1{2{\varDelta}x}\int^{x_{j+1}}_{x_{j-1}}(x_{j+1}-x)v^{n+1}_j\int^1_0(1-\tau)(\theta+1)\left(\rho^{n+1}_j+\tau\left\{{\rho}^{\varDelta}(y,t_{n+1-})-\rho^{n+1}_j\right\}\right)^{\theta-1}d\tau\\
&\times\left({\rho}^{\varDelta}(y,t_{n+1-})-\rho^{n+1}_j\right)^2dx
+\frac1{2{\varDelta}x}\int^{x_{j+1}}_{x_{j-1}}(x_{j+1}-x)R^{n+1}_j(x,u^{n+1}_j)dx\\
=&:B_1+B_2+B_3.
	\end{alignedat}
\label{lemma 3.4}
\end{align}

If $E^{n+1}_j(\rho)<({\varDelta}x)^{\delta}$, we find $B_1=o({\varDelta}x)$ and $B_2=o({\varDelta}x)$. 
Therefore, we devote to investigating the case where $E^{n+1}_j(\rho)\geq({\varDelta}x)^{\delta}$. 
From \eqref{def-u^n_j}, we recall that $E^{n+1}_j(z)=z^{n+1}_j,\;E^{n+1}_j(w)=w^{n+1}_j$.

We set 
\begin{align*}
	S=\left\{x\in[x_{j-1},x_{j+1}];w^{\varDelta}(x,t_{n-0})\leq M_{n+1}+L^{n+1}_j+I^{n+1}_j-
	\left({\varDelta}x\right)^{1/4}\right\}.
\end{align*}

If $\mu(S)/(2{\varDelta}x)\geq\left({\varDelta}x\right)^{1/4}$, from the 
Jensen inequality, we find that $w^{n+1}_j\leq M_{n+1}+L^{n+1}_j+I^{n+1}_j-
\left({\varDelta}x\right)^{1/2}/2$, where $\mu$ is the Lebesgue measure. In this case, since $B_1=O({\varDelta}x),
\;B_2=O({\varDelta}x),
\;B_3=O({\varDelta}x)$, we can obtain 
$\eqref{average}_2$.

Otherwise, we consider the following lemma.

\begin{lemma}\label{lem:average2}If $\mu(S)/(2{\varDelta}x)<\left({\varDelta}x\right)^{1/4}$, 
\begin{align*}
\frac1{2{\varDelta}x}\int^{x_{j+1}}_{x_{j-1}}\int^x_{x_{j-1}}
\Gamma^{n+1}_j(y)dydx\leq&\left(1+C_{\gamma}\alpha\int^1_0\rho_0(x)dx\right)\frac1{2{\varDelta}x}\int^{x_{j+1}}_{x_{j-1}}(x_{j+1}-x)R^{n+1}_j(x)dx\\
&+o({\varDelta}x).\end{align*}
\end{lemma}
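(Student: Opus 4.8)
\textbf{Proof plan for Lemma \ref{lem:average2}.}
The goal is to bound the double integral $\frac1{2{\varDelta}x}\int^{x_{j+1}}_{x_{j-1}}\int^x_{x_{j-1}}\Gamma^{n+1}_j(y)\,dy\,dx$, which by \eqref{lemma 3.4} decomposes as $B_1+B_2+B_3$; the term $B_3$ is exactly the quantity whose coefficient $\left(1+C_{\gamma}\alpha\int^1_0\rho_0(x)dx\right)$ appears on the right, so the task reduces to showing $B_1+B_2\leq C_{\gamma}\alpha\int^1_0\rho_0(x)dx\cdot\frac1{2{\varDelta}x}\int^{x_{j+1}}_{x_{j-1}}(x_{j+1}-x)R^{n+1}_j(x)dx+o({\varDelta}x)$. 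First I would note that $B_2\leq0$ (it carries a minus sign with $v^{n+1}_j$ of one sign and a manifestly nonnegative integrand, modulo the sign of $v^{n+1}_j$), so in the favorable case it can simply be discarded; the real work is to control $B_1$, which involves the factor $w^{\varDelta}(x,t_{n+1-})-w^{n+1}_j$. Since $w^{n+1}_j=E^{n+1}_j(w)$ by \eqref{def-u^n_j} (using $E^{n+1}_j(\rho)\geq({\varDelta}x)^{\delta}$), this difference has mean value related to zero against the density weight; the point of introducing the set $S$ and assuming $\mu(S)/(2{\varDelta}x)<\left({\varDelta}x\right)^{1/4}$ is that \emph{outside} a tiny set $w^{\varDelta}$ is very close to its cap $M_{n+1}+L^{n+1}_j+I^{n+1}_j$, hence close to $w^{n+1}_j$ from above, so $w^{\varDelta}-w^{n+1}_j$ is small except on a set of measure $o({\varDelta}x)$.

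The key algebraic step is to rewrite $w^{\varDelta}(x,t_{n+1-})-w^{n+1}_j$ in terms of $\rho^{\varDelta}-\rho^{n+1}_j$ and $z^{\varDelta}-z^{n+1}_j$ via the Riemann-invariant relations in Remark \ref{rem:Riemann-invariant}: writing $w-z=2\rho^{\theta}/\theta$ and $v=(w+z)/2$, one gets $w^{\varDelta}-w^{n+1}_j=(v^{\varDelta}-v^{n+1}_j)+\frac1\theta\bigl((\rho^{\varDelta})^{\theta}-(\rho^{n+1}_j)^{\theta}\bigr)$. The velocity part should be reorganized so that what multiplies $\rho^{\varDelta}$ in $B_1$ produces, after the same integration-by-parts trick used for $A_{32}$ above, a quadratic expression in $\rho^{\varDelta}-\rho^{n+1}_j$; the density-power part is handled by a Taylor expansion $(\rho^{\varDelta})^{\theta}-(\rho^{n+1}_j)^{\theta}=\theta(\rho^{n+1}_j)^{\theta-1}(\rho^{\varDelta}-\rho^{n+1}_j)+O\bigl((\rho^{\varDelta}-\rho^{n+1}_j)^2\bigr)$ whose linear term, once multiplied by $\rho^{\varDelta}$ and integrated against $(x_{j+1}-x)$, again integrates by parts to a quadratic. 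The upshot is that $B_1$ is, up to $o({\varDelta}x)$ and a negative remainder that can be thrown in with $B_2$, bounded by a constant times $\frac1{2{\varDelta}x}\int^{x_{j+1}}_{x_{j-1}}(x_{j+1}-x)\bigl(\rho^{\varDelta}(x,t_{n+1-})-\rho^{n+1}_j\bigr)^2dx$, and this quadratic density deviation is in turn controlled by $R^{n+1}_j(x)$ since $\nabla^2\eta_{\ast}$ is uniformly positive definite in the $\rho$-direction on the relevant compact range (this is where $C_{\gamma}$ in \eqref{CGamma} enters, comparing $(\theta+1)\rho^{\theta-1}$-type lower bounds of the Hessian against $\alpha$ and $\int^1_0\rho_0$).

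For the bookkeeping I would proceed in the following order: (1) record $B_2\leq0$ (possibly after splitting on the sign of $v^{n+1}_j$, using Remark \ref{rem:Riemann-invariant} so that the sign of $v$ is tied to which of $w,z$ dominates); (2) substitute the Riemann-invariant identity for $w^{\varDelta}-w^{n+1}_j$ into $B_1$ and perform the $A_{32}$-style integration by parts to convert linear-in-$\rho^{\varDelta}$ pieces into quadratic ones plus boundary terms that are $o({\varDelta}x)$ thanks to the smallness of $\mu(S)$; (3) Taylor-expand the $\rho^{\theta}$ difference and absorb its quadratic remainder; (4) use positive-definiteness of $\nabla^2\eta_{\ast}$ (the $\partial^2/\partial\rho^2$ component $\geq(\theta+1)\rho^{\theta-1}$ after the change of frame by $a^{n+1}_j$) to dominate the accumulated quadratic density term by $\frac{C_{\gamma}}{?}\int(x_{j+1}-x)R^{n+1}_j$, picking the constant so the total coefficient is exactly $C_{\gamma}\alpha\int^1_0\rho_0(x)dx$ using the mass bound $\int^x_0\rho^{\varDelta}\,dy=O(1)$ from \eqref{bound2}; (5) collect the $o({\varDelta}x)$ errors, noting they depend only on $M$. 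The main obstacle I anticipate is step (2)–(4): making the integration by parts produce \emph{exactly} a multiple of $\int(x_{j+1}-x)R^{n+1}_j$ rather than a generic quadratic, and verifying that the constant one obtains is genuinely $\leq C_{\gamma}\alpha\int^1_0\rho_0$; this forces a careful comparison between the coefficient $\alpha-a^{n+1}_j$ produced by the velocity reorganization, the factor $v^{n+1}_j$ (which is $O(M)$ and must be beaten by the smallness coming from $\mu(S)$ and the $R^{n+1}_j$ weight), and the precise form of $C_{\gamma}$ in \eqref{CGamma}. The secondary obstacle is handling the sign of $v^{n+1}_j$ uniformly, since $B_1$ and $B_2$ swap favorable signs depending on whether $v^{n+1}_j\gtrless0$; I would treat $v^{n+1}_j\geq0$ and $v^{n+1}_j\leq0$ separately, in each case using the appropriate inequality $|w|\geq|z|$ or $|z|\geq|w|$ from Remark \ref{rem:Riemann-invariant} so that the ``bad'' term is the one that is quadratic and sign-definite.
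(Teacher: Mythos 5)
Your decomposition into $B_1+B_2+B_3$ and the observation that $B_3$ supplies the coefficient $1$ match the paper, but you have the difficulty allocated backwards, and the term that actually produces the factor $C_{\gamma}\alpha\int_0^1\rho_0(x)dx$ is left essentially untreated. In the paper, $B_1$ is the \emph{easy} term: the hypothesis $\mu(S)/(2{\varDelta}x)<({\varDelta}x)^{1/4}$ forces, by averaging, $w^{n+1}_j\geq M_{n+1}+L^{n+1}_j+I^{n+1}_j-C({\varDelta}x)^{1/4}$, while on $[x_{j-1},x_{j+1}]\setminus S$ one has $w^{\varDelta}\geq M_{n+1}+L^{n+1}_j+I^{n+1}_j-({\varDelta}x)^{1/4}$ and Theorem \ref{thm:bound} caps both quantities from above; hence $|w^{\varDelta}-w^{n+1}_j|=O(({\varDelta}x)^{1/4})$ off $S$, and the contribution of $S$ itself is absorbed because $\mu(S)<2{\varDelta}x\cdot({\varDelta}x)^{1/4}=o({\varDelta}x)$. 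So $B_1=o({\varDelta}x)$ outright; no Riemann-invariant rewriting, no $A_{32}$-style integration by parts, and no Taylor expansion of $\rho^{\theta}$ is needed, and your proposed scheme — which would convert $B_1$ into a quadratic in $\rho^{\varDelta}-\rho^{n+1}_j$ multiplied by $v^{n+1}_j=O(M)$ — would not by itself produce the stated constant.

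The genuine work is $B_2$ in the case $v^{n+1}_j<0$, which you dismiss as a ``secondary obstacle'' with only the remark that the signs swap. This is precisely where the constant comes from: since $z^{n+1}_j\geq-M_{n+1}-L^{n+1}_j+I^{n+1}_j+O({\varDelta}x)$ and $w^{n+1}_j$ is pinned near its cap, one gets $v^{n+1}_j\geq I^{n+1}_j$, so when $v^{n+1}_j<0$ the conservation of mass yields $-v^{n+1}_j\leq-I^{n+1}_j\leq\alpha\int_0^1\rho_0(x)dx+o({\varDelta}x)$; this, and not the bound $\int_0^x\rho^{\varDelta}dy=O(1)$ applied inside $B_1$, is where the factor $\alpha\int_0^1\rho_0$ enters. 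One must then dominate the $(\theta+1)(\cdot)^{\theta-1}$-weighted quadratic form in $B_2$ by $C_{\gamma}R^{n+1}_j(y)$, whose Hessian weight is of the type $(\cdot)^{\gamma-2}$; this requires the case distinction $\rho^{\varDelta}\geq\rho^{n+1}_j/2$ versus $\rho^{\varDelta}<\rho^{n+1}_j/2$ (using $(\rho^{n+1}_j)^{\theta}/\theta\geq1/\theta$ for $M$ large, again a consequence of $w^{n+1}_j$ being near its cap), and the two entries of $C_{\gamma}$ in \eqref{CGamma} are built exactly for this comparison. Without these steps your argument does not close.
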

\begin{proof}

We first treat with $B_1$ in \eqref{lemma 3.4}.
If $\mu(S)/(2{\varDelta}x)<\left({\varDelta}x\right)^{1/4}$, there exists a positive constant $C$ independent of
${\varDelta}x$ such that $w^{n+1}_j\geq M_{n+1}+L^{n+1}_j+I^{n+1}_j-C\left({\varDelta}x\right)^{1/4}$. We thus have
\begin{align*}
	\begin{alignedat}{2}
	|B_1|
		\leq&\frac{1}{2{\varDelta}x}\int^{x_{j+1}}_{x_{j-1}}\left|(x_{j+1}-x)v^{n+1}_j{\rho}^{\varDelta}(y,t_{n+1-})\left(w(y,t_{n+1-})-w^{n+1}_j\right)\right|dx\\
		\leq& O(1)\left\{\frac{1}{2{\varDelta}x}\int_{S}\left|w(y,t_{n+1-})-w^{n+1}_j
		\right|dx+\frac{1}{2{\varDelta}x}\int_{[x_{j-1},x_{j+1}]\setminus S}\left|w(y,t_{n+1-})-w^{n+1}_j
		\right|dx\right\}\\
		=&o({\varDelta}x).
	\end{alignedat}
\end{align*}

We next consider $B_2$.
Since $z^{n+1}_j\geq -M_{n+1}-L^{n+1}_j+I^{n+1}_j+O({\varDelta}x)$, we find $v^{n+1}_j\geq I^{n+1}_j$. If $v^{n+1}_j\geq0$, we have $B_2\leq0$. Therefore, we devotes to considering the case $v^{n+1}_j<0$. Since $I^{n+1}_j\leq v^{n+1}_j\leq 0$, from the conservation of mass, we have 
\begin{align*}
-v^{n+1}_j\leq -I^{n+1}_j\leq \alpha \int^1_0\rho_0(x)dx+o({\varDelta}x).
\end{align*}
On the other hand, we find that $(\rho^{n+1}_j)^{\theta}/\theta\geq M_{n+1}+L^{n+1}_j+I^{n+1}_j-C\left({\varDelta}x\right)^{1/4}\geq1/\theta$, choosing $M$ large enough.

If ${\rho}^{\varDelta}(y,t_{n+1-})\geq\rho^{n+1}_j/2$, we have
\begin{align*}
&\left(\rho^{n+1}_j+\tau\left\{{\rho}^{\varDelta}(y,t_{n+1-})-\rho^{n+1}_j\right\}\right)^{\theta-1}=
\dfrac{\left(\rho^{n+1}_j+\tau\left\{{\rho}^{\varDelta}(y,t_{n+1-})-\rho^{n+1}_j\right\}\right)^{\gamma-2}}{\left(\rho^{n+1}_j+\tau\left\{{\rho}^{\varDelta}(y,t_{n+1-})-\rho^{n+1}_j\right\}\right)^{\theta}}\\
&\qquad\leq\dfrac{\left(\rho^{n+1}_j+\tau\left\{{\rho}^{\varDelta}(y,t_{n+1-})-\rho^{n+1}_j\right\}\right)^{\gamma-2}}{\left(\rho^{n+1}_j+\tau\left\{{\rho}^{\varDelta}(y,t_{n+1-})-\rho^{n+1}_j\right\}\right)^{\theta}}
\leq\dfrac{\left(\rho^{n+1}_j+\tau\left\{{\rho}^{\varDelta}(y,t_{n+1-})-\rho^{n+1}_j\right\}\right)^{\gamma-2}}{\left(1/2\right)^{\theta}}.
\end{align*}
If ${\rho}^{\varDelta}(y,t_{n+1-})<\rho^{n+1}_j/2$, since\begin{align*}
	\begin{alignedat}{2}
		&\int^1_0(1-\tau)(\theta+1)\left(\rho^{n+1}_j+\tau\left\{{\rho}^{\varDelta}(y,t_{n+1-})-\rho^{n+1}_j\right\}\right)^{\theta-1}d\tau\left({\rho}^{\varDelta}(y,t_{n+1-})-\rho^{n+1}_j\right)^2\\
		&\quad=
	\dfrac{({\rho}^{\varDelta}(y,t_{n+1-}))^{\theta+1}}{\theta}-\dfrac{(\rho^{n+1}_j)^{\theta+1}}{\theta}	-\dfrac{(\theta+1)(\rho^{n+1}_j)^{\theta}}{\theta}\left({\rho}^{\varDelta}(y,t_{n+1-})-\rho^{n+1}_j\right)
	\leq (\rho^{n+1}_j)^{\theta+1}
	\end{alignedat}
\end{align*}
and
\begin{align*}
	\begin{alignedat}{2}
		&\int^1_0(1-\tau)\left(\rho^{n+1}_j+\tau\left\{{\rho}^{\varDelta}(y,t_{n+1-})-\rho^{n+1}_j\right\}\right)^{\gamma-2}d\tau\left({\rho}^{\varDelta}(y,t_{n+1-})-\rho^{n+1}_j\right)^2\\
		&\quad=
		\dfrac{({\rho}^{\varDelta}(y,t_{n+1-}))^{\gamma}}{\gamma(\gamma-1)}-
		\dfrac{(\rho^{n+1}_j)^{\gamma}}{\gamma(\gamma-1)}	-\dfrac{(\rho^{n+1}_j)^{\gamma-1}}{\gamma-1}\left({\rho}^{\varDelta}(y,t_{n+1-})-\rho^{n+1}_j\right)
\\
&\quad\geq\dfrac{\gamma-2+\left(\frac12\right)^{\gamma-1}}{2\gamma(\gamma-1)}(\rho^{n+1}_j)^{\gamma}
\geq\dfrac{\gamma-2+\left(\frac12\right)^{\gamma-1}}{2\gamma(\gamma-1)}(\rho^{n+1}_j)^{\theta+1}.
	\end{alignedat}
\end{align*}
For $C_{\gamma}$ in \eqref{CGamma}, we have\begin{align*}
	\begin{alignedat}{2}
		&\int^1_0(1-\tau)\left(\rho^{n+1}_j+\tau\left\{{\rho}^{\varDelta}(y,t_{n+1-})-\rho^{n+1}_j\right\}\right)^{\theta+1}d\tau\left({\rho}^{\varDelta}(y,t_{n+1-})-\rho^{n+1}_j\right)^2\\&\quad\leq C_{\gamma}
		\int^1_0(1-\tau)\left(\rho^{n+1}_j+\tau\left\{{\rho}^{\varDelta}(y,t_{n+1-})-\rho^{n+1}_j\right\}\right)^{\gamma-2}d\tau\left({\rho}^{\varDelta}(y,t_{n+1-})-\rho^{n+1}_j\right)^2\\&\quad\leq C_{\gamma}R^{n+1}_j(y).
	\end{alignedat}
\end{align*}
\end{proof}
From Lemma \ref{lem:average2}, we can complete the proof of Theorem \ref{thm:average}. 
\vspace*{0ex}

\section{Proof of Theorem \ref{thm:periodic}}
To deduce that the sum of $J^{n}_j$ is bounded, we prove the following lemma.
\begin{lemma}\label{lem:error}
	\begin{align}
		0\leq&\sum_{n\in{\bf N}_t}\int^1_0\left\{\eta_{\ast}\left({u}^{\varDelta}(x,t_{n-})\right)-\eta_{\ast}\left({u}^{\varDelta}_{n,0}(x)\right)\right\}dx\label{lemm 4.2 1}\\=&
		\sum_{\substack{j\in J_n\\n\in {\bf N}_t}}\int^{x_{j+1}}_{x_{j-1}}R^n_j(x)dx+o({\varDelta}x)\label{lemm 4.2 2}
		\\
		=&
		\int^1_0\left\{\eta_{\ast}\left({u}^{\varDelta}(x,t_{0-})\right)-\eta_{\ast}\left({u}^{\varDelta}_{2N_x,0}(x)\right)\right\}dx
		-\int^{1}_{0}\sum_{0\leq x\leq 1}(\sigma[\eta_{\ast}]-[q_{\ast}])dt+o({\varDelta}x)
		\nonumber
	\end{align}\begin{align}
		\leq&\int^1_0\eta_{\ast}(u_0(x))dx+o({\varDelta}x),\label{lemm 4.2 3}
		\\
		&\hspace*{-4ex}\left(1+C_{\gamma}\alpha\int^1_0\rho_0(x)dx\right)\sum_{\substack{j\in J_n\\n\in {\bf N}_t}}\frac1{2{\varDelta}x}\int^{x_{j+1}}_{x_{j-1}}(x_{j+1}-x)R^n_j(x)dx\leq K_{\gamma}\int^1_0\eta_{\ast}(u_0(x))dx.
		\label{lemm 4.2 4}
	\end{align}
\end{lemma}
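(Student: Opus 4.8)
The plan is to read off the four relations in turn from, respectively, convexity of $\eta_\ast$, a second–order Taylor expansion, a discrete energy balance, and the entropy inequality; everything needed is already available in Sections \ref{sec:construction-approximate-solutions}--\ref{sec:bound}. First I would establish the equality of \eqref{lemm 4.2 1} and \eqref{lemm 4.2 2} together with the sign of \eqref{lemm 4.2 1}. On each cell $[x_{j-1},x_{j+1}]$ we have $u^{\varDelta}_{n,0}\equiv u^n_j$, and by Remark \ref{rem:E} (with \eqref{average}) $u^n_j$ differs from the cell average of $u^{\varDelta}(\cdot,t_{n-})$ only by $o({\varDelta}x)$. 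Expanding $\eta_\ast$ about $u^n_j$ exactly as in \eqref{Taylor}, the cell integral of the first–order term equals $2{\varDelta}x\,\nabla\eta_\ast(u^n_j)(E^n_j(u)-u^n_j)=o(({\varDelta}x)^2)$, and summing over $j\in J_n$, $n\in{\bf N}_t$ gives \eqref{lemm 4.2 1}$=$\eqref{lemm 4.2 2}. Nonnegativity of \eqref{lemm 4.2 1} is then immediate from $R^n_j\ge0$ (the Hessian $\nabla^2\eta_\ast$ is positive semidefinite for $\rho\ge0$), equivalently from Jensen's inequality applied cellwise to the convex $\eta_\ast$.

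The heart of the proof is the middle identity. I would set $\mathcal E(t)=\int_0^1\eta_\ast(u^{\varDelta}(x,t))\,dx$ and follow its evolution. At an averaging time $t_n$ the jump $\mathcal E(t_{n-})-\mathcal E(t_{n+})$ is exactly the $n$-th summand of \eqref{lemm 4.2 1}, since $\mathcal E(t_{n+})=\int_0^1\eta_\ast(u^{\varDelta}_{n,0})\,dx$. On $(t_n,t_{n+1})$ the approximate solution is piecewise smooth, satisfies the balance law up to $O({\varDelta}x)$ by Remark \ref{rem:approximate}, obeys the Rankine--Hugoniot relations across each of its discontinuities, and keeps $m^{\varDelta}=0$ on $x=0,1$ (hence $q_\ast=0$ there). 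Multiplying by $\nabla\eta_\ast$ — with $\nabla\eta_\ast\cdot g=m^{\varDelta}F$ — integrating in $x$, and using the shock relations to handle the jumps of $q_\ast$, one obtains over the step $\mathcal E(t_{n+1-})-\mathcal E(t_{n+})=\int_{t_n}^{t_{n+1}}\!\int_0^1 m^{\varDelta}F\,dx\,dt-\int_{t_n}^{t_{n+1}}\sum_{0\le x\le1}(\sigma[\eta_\ast]-[q_\ast])\,dt+(\text{consistency error})$. Summing these step balances together with the averaging jumps over $n\in{\bf N}_t$, the interior values of $\mathcal E$ telescope and $\mathcal E(t_{0-})=\int_0^1\eta_\ast(u_0)\,dx$; the accumulated consistency errors are $o({\varDelta}x)$ by the CFL relation \eqref{CFL}, and the forcing contribution $\int_0^1\!\int_0^1 m^{\varDelta}F\,dx\,dt$ is $O(\kappa)$ by the $L^\infty$ and energy bounds of Section \ref{sec:bound} and is thus absorbed. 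This yields the middle identity of the lemma.

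For \eqref{lemm 4.2 3} I would then simply drop the two sign-definite contributions on the right of that identity: $-\int_0^1\eta_\ast(u^{\varDelta}_{2N_x,0})\,dx\le0$ by nonnegativity of $\eta_\ast$, and $-\int_0^1\sum_{0\le x\le1}(\sigma[\eta_\ast]-[q_\ast])\,dt\le0$ by the entropy inequality noted after \eqref{functional discontinuity} (and recall $u^{\varDelta}(x,t_{0-})=u_0(x)$). Finally, \eqref{lemm 4.2 4} is immediate: on the cell $0\le x_{j+1}-x\le2{\varDelta}x$, so $\tfrac1{2{\varDelta}x}(x_{j+1}-x)\le1$, and since $R^n_j\ge0$ this gives $\tfrac1{2{\varDelta}x}\int_{x_{j-1}}^{x_{j+1}}(x_{j+1}-x)R^n_j\,dx\le\int_{x_{j-1}}^{x_{j+1}}R^n_j\,dx$; summing and using \eqref{lemm 4.2 2}--\eqref{lemm 4.2 3} bounds the left side of \eqref{lemm 4.2 4} by $\bigl(1+C_\gamma\alpha\int_0^1\rho_0\,dx\bigr)\bigl(\int_0^1\eta_\ast(u_0)\,dx+o({\varDelta}x)\bigr)$, which is $\le K_\gamma\int_0^1\eta_\ast(u_0)\,dx$ for $K_\gamma$ chosen larger than $1+C_\gamma\alpha\int_0^1\rho_0\,dx$ and ${\varDelta}x$ small.

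I expect the real obstacle to be the energy identity, more precisely the error accounting behind it. One must check that the $O({\varDelta}x)$ inconsistency of $u^{\varDelta}$, summed over all $O(1/({\varDelta}x\,{\varDelta}t))$ divided parts of all $O(1/{\varDelta}t)$ time steps, still contributes only $o({\varDelta}x)$, which rests on \eqref{CFL} and on the finer structure of the construction — in particular the $O(({\varDelta}x)^\alpha)$ widths of the rarefaction fans and the choice $1/2<\alpha<1$, which is what tames the (entropy–violating, but cubically small) jumps across the fan rays. One must also collect the entropy production at \emph{every} discontinuity of $u^{\varDelta}$, the perturbed shocks and the fan rays alike, which is exactly what forces the term $\int_0^1\sum_{0\le x\le1}(\sigma[\eta_\ast]-[q_\ast])\,dt$ to appear, and verify that the construction near $x=0,1$ genuinely preserves $m^{\varDelta}=0$ so that the boundary contributions vanish.
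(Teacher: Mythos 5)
Your proof follows the same route the paper compresses into one sentence: Jensen/convexity of $\eta_{\ast}$ for \eqref{lemm 4.2 1}, the Taylor expansion \eqref{Taylor} for \eqref{lemm 4.2 2}, the telescoping energy balance (which the paper delegates to \cite[(6.12)]{T1}) for the unnumbered identity and \eqref{lemm 4.2 3}, and the pointwise bound $(x_{j+1}-x)/(2{\varDelta}x)\leq 1$ together with $R^n_j\geq0$ for \eqref{lemm 4.2 4}. The one point to watch is that the forcing contribution $\int_0^1\!\int_0^1 m^{\varDelta}F\,dx\,dt$ is $O(\kappa)$, not $o({\varDelta}x)$, so it cannot literally be ``absorbed'' into the error term as you claim — but the lemma as stated in the paper carries the same imprecision, so this is not a defect specific to your argument.
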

\begin{proof}
	Recalling \eqref{def-u0}, we deduce from the Jensen inequality \eqref{lemm 4.2 1}; from \eqref{Taylor}, we obtain \eqref{lemm 4.2 2}; 
	we can find a similar argument to \eqref{lemm 4.2 3} in \cite[(6.12)]{T1};
	it follows from the second inequality that \eqref{lemm 4.2 4}. 
\end{proof}

Our approximate solutions satisfy the following propositions holds (these proofs are similar to \cite{T1}--\cite{T3}.).
\begin{proposition}\label{pro:compact}
The measure sequence
\begin{align*}
\eta_{\ast}(u^{\varDelta})_t+q(u^{\varDelta})_x
\end{align*}
lies in a compact subset of $H_{\rm loc}^{-1}(\Omega)$ for all weak entropy 
pair $(\eta_{\ast},q)$, where $\Omega\subset[0,1]\times[0,1]$ is any bounded
and open set. 
\end{proposition}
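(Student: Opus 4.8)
The plan is to deduce the statement from the Murat--Tartar compactness interpolation lemma: if a sequence of distributions on an open set $\Omega$ is bounded in $W^{-1,q}_{\rm loc}(\Omega)$ for some $q>2$ and splits as the sum of a sequence lying in a compact subset of $H^{-1}_{\rm loc}(\Omega)$ and a sequence bounded in $\mathcal{M}_{\rm loc}(\Omega)$ (locally bounded Radon measures), then it lies in a compact subset of $H^{-1}_{\rm loc}(\Omega)$. By Theorem~\ref{thm:bound} and Remark~\ref{rem:bound}, the approximate solutions $u^{\varDelta}$ are uniformly bounded in $L^{\infty}([0,1]\times[0,1])$, so for every weak entropy pair $(\eta,q)$ the functions $\eta(u^{\varDelta})$ and $q(u^{\varDelta})$ are uniformly bounded in $L^{\infty}$; hence $\eta(u^{\varDelta})_t+q(u^{\varDelta})_x$ is uniformly bounded in $W^{-1,\infty}_{\rm loc}(\Omega)$, a fortiori in $W^{-1,q}_{\rm loc}(\Omega)$ for every $q$. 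It therefore remains only to exhibit the decomposition.

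First I would expand $\eta(u^{\varDelta})_t+q(u^{\varDelta})_x$ as a distribution, using that $u^{\varDelta}$ is piecewise smooth (Remark~\ref{rem:approximate}). On the interior of each divided part of each cell one has $(u^{\varDelta})_t+f(u^{\varDelta})_x-g(x,t,u^{\varDelta})=O(\varDelta x)$, so there $\eta(u^{\varDelta})_t+q(u^{\varDelta})_x=\nabla\eta(u^{\varDelta})\,g(x,t,u^{\varDelta})+O(\varDelta x)$, which is uniformly bounded in $L^{\infty}_{\rm loc}$, with the $O(\varDelta x)$ tending to $0$ uniformly. Differentiating across the whole strip then produces in addition line measures supported on the edges of the divided parts: the shock curves and rarefaction-fan rays inside each cell, carrying density $-(\sigma[\eta]-[q])$ (up to orientation), and the time slices $t=t_n$, carrying the densities $\eta(u^{\varDelta}(x,t_{n-}))-\eta(E^n(x;u))$ produced by the averaging, the Taylor remainders of $\eta$ at $u^n_j$ as in \eqref{Taylor}, and the $o(\varDelta x)$-per-cell corrections coming from the vacuum cut-offs of Remark~\ref{rem:E}. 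Thus
\begin{align*}
\eta(u^{\varDelta})_t+q(u^{\varDelta})_x=\nabla\eta(u^{\varDelta})\,g(x,t,u^{\varDelta})+\mu^{\varDelta}+O(\varDelta x),
\end{align*}
where $\mu^{\varDelta}$ is the above sum of line measures.

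Next I would bound $\mu^{\varDelta}$ in $\mathcal{M}_{\rm loc}$. For the mechanical entropy $\eta_{\ast}$ the shock densities $\sigma[\eta_{\ast}]-[q_{\ast}]$ and the averaging densities $\eta_{\ast}(u^{\varDelta}(x,t_{n-}))-\eta_{\ast}(E^n(x;u))$ are nonnegative by the entropy condition and the Jensen inequality, and the definition of $L^n_j$ in \eqref{functional discontinuity} together with Lemma~\ref{lem:error} shows that the total entropy dissipation $\int^1_0\sum_{0\le x\le1}(\sigma[\eta_{\ast}]-[q_{\ast}])\,dt$, the summed averaging gaps, and the summed Taylor remainders $\sum_{j,n}\int^{x_{j+1}}_{x_{j-1}}R^n_j(x)\,dx$ are all bounded by a constant independent of $\varDelta x$ times $\int^1_0\eta_{\ast}(u_0(x))\,dx$; the rarefaction-fan rays carry only $o(\varDelta x)$ of dissipation per cell and hence a vanishing total, as in \cite[Appendix A]{T1}, and the cut-off corrections sum to $o(1)$. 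For a general weak entropy $\eta$ one reduces to this case: on the bounded invariant region the Hessian of $\eta$ is dominated by that of $\eta_{\ast}$, and across each discontinuity $|\sigma[\eta]-[q]|\le C\,|\sigma[\eta_{\ast}]-[q_{\ast}]|$ with $C$ depending only on $M$; hence $\|\mu^{\varDelta}\|_{\mathcal{M}([0,1]\times[0,1])}$ is bounded uniformly in $\varDelta x$. Since also $\nabla\eta(u^{\varDelta})\,g(x,t,u^{\varDelta})$ is uniformly bounded in $L^{\infty}_{\rm loc}\subset\mathcal{M}_{\rm loc}$, while the $O(\varDelta x)$ error tends to $0$ in $L^{2}_{\rm loc}(\Omega)$ and hence lies in a compact subset of $H^{-1}_{\rm loc}(\Omega)$, writing $\eta(u^{\varDelta})_t+q(u^{\varDelta})_x=\big(O(\varDelta x)\big)+\big(\nabla\eta(u^{\varDelta})\,g(x,t,u^{\varDelta})+\mu^{\varDelta}\big)$ and invoking Murat's lemma gives the claim.

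The step I expect to be the main obstacle is the measure estimate on $\mu^{\varDelta}$: one must account for every mechanism by which the modified scheme increases the entropy --- genuine shocks, the piecewise-constant rarefaction fans, the cell-interface averaging, and the vacuum cut-offs --- and verify that each is absorbed by the functional $L^n_j$ of \eqref{functional discontinuity} and thus controlled through Lemma~\ref{lem:error}. The remaining steps are the standard compensated-compactness bookkeeping and parallel those of \cite{T1}--\cite{T3}.
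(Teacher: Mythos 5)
Your proposal is correct and follows essentially the same route as the paper, which gives no proof of this proposition but defers to \cite{T1}--\cite{T3}: there the compactness is obtained by exactly this Murat--Tartar interpolation argument, splitting $\eta(u^{\varDelta})_t+q(u^{\varDelta})_x$ into an $H^{-1}_{\rm loc}$-compact remainder and a measure part whose total mass is controlled by the entropy dissipation across shocks, the Jensen gaps at the averaging times, and the Taylor remainders, all of which are bounded via the functional \eqref{functional discontinuity} and Lemma \ref{lem:error}, with general weak entropies reduced to the mechanical one by Hessian domination. No gap to report.
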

\begin{proposition} 
Assume that the approximate solutions $u^{\varDelta}$ are bounded and satisfy Proposition \ref{pro:compact}. Then there is a convergent subsequence $u^{\varDelta_n}(x,t)$
in the approximate solutions $u^{\varDelta}(x,t)$ such that
\begin{equation*}      
u^{\varDelta_n}(x,t)\rightarrow u(x,t)
\hspace{2ex}
\text{\rm a.e.,\quad as\;\;}n\rightarrow \infty.
\end{equation*} 
The function $u(x,t)$ is a global entropy solution
of the Cauchy problem \eqref{IP}.
\end{proposition}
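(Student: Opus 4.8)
The plan is to pass to the limit $\Delta x\to0$ by the compensated compactness framework of Tartar, DiPerna and Ding--Chen--Luo for the isentropic Euler system, adapted to the present source term and the bounded interval. By the $L^\infty$ bound of Theorem \ref{thm:bound} together with Remark \ref{rem:bound}, the family $u^\Delta=(\rho^\Delta,m^\Delta)$ remains in a fixed bounded subset of the physical region $\{\rho\geq0\}$ that depends only on $M$, so I would first extract a subsequence $u^{\Delta_n}$ together with an associated Young measure $(\nu_{x,t})$ supported in that set, for which $h(u^{\Delta_n})\rightharpoonup\langle\nu_{x,t},h\rangle$ weakly-$\ast$ in $L^\infty$ for every continuous $h$. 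The whole argument then reduces to showing that $\nu_{x,t}=\delta_{u(x,t)}$ for a.e.\ $(x,t)$; once this holds, $u^{\Delta_n}\to u$ a.e.\ and every nonlinear composite converges to its value at $u$.

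Next I would fix two weak entropy pairs $(\eta_1,q_1),(\eta_2,q_2)$ of \eqref{force} and apply the div-curl lemma to the two fields $(\eta_1(u^{\Delta_n}),q_1(u^{\Delta_n}))$ and $(q_2(u^{\Delta_n}),-\eta_2(u^{\Delta_n}))$; Proposition \ref{pro:compact} furnishes exactly the $H^{-1}_{\mathrm{loc}}$ compactness that lets the product pass to the weak limit, which yields the Tartar commutation identity
\begin{align*}
\big\langle\nu_{x,t},\,\eta_1q_2-\eta_2q_1\big\rangle=\big\langle\nu_{x,t},\eta_1\big\rangle\big\langle\nu_{x,t},q_2\big\rangle-\big\langle\nu_{x,t},\eta_2\big\rangle\big\langle\nu_{x,t},q_1\big\rangle
\end{align*}
for a.e.\ $(x,t)$. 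Into this identity I would feed the explicit weak entropies of the $\gamma$-law gas ($1<\gamma\leq5/3$) and run the now-classical support analysis of Ding--Chen--Luo (equivalently, the kinetic reduction of Lions--Perthame--Souganidis): either $\nu_{x,t}$ charges only the vacuum line $\rho=0$, or its support in the Riemann-invariant plane collapses to a point; in both cases the barycentre $\langle\nu_{x,t},(\rho,m)\rangle$ equals a single value $u(x,t)$ and $\nu_{x,t}=\delta_{u(x,t)}$ up to the measure-zero vacuum ambiguity, which gives the a.e.\ convergence $u^{\Delta_n}\to u$.

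Given a.e.\ convergence, I would finish by passing to the limit in the approximate weak and entropy formulations. For the conservation form, Remark \ref{rem:approximate} gives $(u^\Delta)_t+f(u^\Delta)_x-g(x,t,u^\Delta)=O(\Delta x)$ on each smooth piece, Remark \ref{rem:middle-time} supplies the Rankine--Hugoniot relations at $t=t_{n.5}$, and Remark \ref{rem:E} together with Lemma \ref{lem:error} bounds the Lax--Friedrichs averaging error by $o(\Delta x)$; multiplying by a test function, integrating by parts and letting $\Delta x\to0$, the source term converges because $F\in C^1$ and $\rho^{\Delta_n}\to\rho$ boundedly a.e., so $g(x,t,u^{\Delta_n})\to g(x,t,u)$ in $L^1$ by dominated convergence, and the homogeneous data $m|_{x=0}=m|_{x=1}=0$ built into the construction near the endpoints are inherited in the limit. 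For the entropy inequality \eqref{entropy solution} I would use that $\sigma[\eta_\ast]-[q_\ast]\geq0$ across the approximate discontinuities (the quantity controlled in \eqref{functional discontinuity}) and that the averaging step is entropy-nonincreasing by Jensen's inequality, so the limiting inequality holds with the correct source contribution $\nabla\eta(u)\,g(x,t,u)$.

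The hard part will be the Young-measure reduction, that is, proving that the commutation identity forces $\mathrm{supp}\,\nu_{x,t}$ to be a single point; this is the heart of compensated compactness for $2\times2$ systems and is delicate precisely in the range $1<\gamma\leq5/3$, where it rests on fine cancellations among the entropy kernels. Everything else---the $H^{-1}$ compactness, the weak and entropy consistency of the scheme, the $C^1$ source, the vacuum and the boundary data---is either already supplied by the estimates above or is standard. Since the approximate solutions here differ from those of \cite{T1}--\cite{T3} only by the $\zeta(u)$-shift in \eqref{transformation}, which does not alter the local wave structure, the reduction argument of those papers carries over, so I would only indicate it.
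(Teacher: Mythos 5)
Your proposal is correct and follows essentially the same route the paper takes: the paper itself offers no独立 argument for this proposition but defers to the compensated compactness framework of \cite{T1}--\cite{T3} (Young measures, the div-curl lemma and Tartar's commutation relation, the Ding--Chen--Luo reduction for $1<\gamma\leq 5/3$, and the consistency of the modified Lax--Friedrichs scheme via Remarks \ref{rem:middle-time}--\ref{rem:approximate} and Lemma \ref{lem:error}), which is exactly the outline you give. The only caveat is that, like the paper, you indicate rather than carry out the Young-measure reduction, so the proposal is a correct sketch at the same level of detail as the source.
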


\subsection{Existence of a time periodic solution}From Remark \ref{rem:approximate}, $u^{\varDelta}$ satisfy 
\begin{align*}
	(u^{\varDelta})_t+f(u^{\varDelta})_x-g(x,t,u^{\varDelta})=O(\varDelta x)
\end{align*}
on the divided part in the cell where $u^{\varDelta}$ are smooth. Moreover, $u^{\varDelta}$ satisfy an entropy condition (see \cite[Lemma 5.1--Lemma 5.4]{T1}) along 
discontinuous lines approximately. Then, applying the Green formula to $(u^{\varDelta})_t+f(u^{\varDelta})_x-g(x,t,u^{\varDelta})$ in the cell $x_{j-1}\leqq{x}<x_{j+1},\;{t}_n\leqq{t}<{t}_{n+1}\quad(j\in J_{n+1},\;n\in{\bf N}_t)$, we have
\begin{align}
	\begin{split}
		\rho^{n+1}_j=&\frac{\rho^n_{j+1}+\rho^n_{j-1}}2-\frac{\varDelta t}{2\varDelta x}\left\{m^n_{j+1}-m^n_{j-1}\right\}-R^n_{j+1}+R^n_{j-1}+o(\varDelta x), 
		\\
		m^{n+1}_j=&\frac{m^n_{j+1}+m^n_{j-1}}2-\frac{\varDelta t}{2\varDelta x}\left\{\frac{(m^n_{j+1})^2}{\rho^n_{j+1}}+p(\rho^n_{j+1})
		-\frac{(m^n_{j-1})^2}{\rho^n_{j-1}}-p(\rho^n_{j-1})\right\}\\
		&-S^n_{j+1}+S^n_{j-1}-{\varDelta t}\dfrac{\rho^n_{j+1}+\rho^n_{j-1}}{2}F(x_j,t_n)+o(\varDelta x),
	\end{split}
	\label{recurrence1}
\end{align}
where    
\begin{align}\begin{alignedat}{2}
	R^n_j=&\frac{(\varDelta t)^2}{8\varDelta x}\left\{
	\rho^n_j\left(H^n_j+G^n_j\right)+	
	\dfrac{m^n_j}{(\rho^n_j)^{\theta}}
	\left(H^n_j-G^n_j\right)\right\}
	,\\
	S^n_j=&
	\frac{\varDelta x}{4}(\rho^n_j)\zeta(u^n_j)
	+\frac{(\varDelta t)^2}{8\varDelta x}
	\biggl[2\rho^n_j\left\{H^n_j+G^n_j+2V(u^n_j)\right\}\\&\left.
+\dfrac{\rho^n_j(v^n_j)^2+(\rho^n_j)^{\gamma}}{(\rho^n_j)^{\theta}}\left(H^n_j-G^n_j\right)
-2m^n_j\right],
\quad(\text{recall \eqref{zeta} and \eqref{V}})
\end{alignedat}
\label{RS}
\end{align}
\begin{align*}
	\begin{alignedat}{2}
	G^n_j=&-K\lambda_1(u^n_j)+\dfrac{1}{\gamma(\gamma-1)}(\rho^n_j)^{\gamma+\theta}
		+\dfrac{1}{\gamma}(\rho^n_j)^{\gamma}v^n_j+\dfrac{1}{2}(\rho^n_j)^{\theta+1}(v^n_j)^2\\&-\alpha(\rho^n_j)^{\theta+1}
		+F(x_j,t_n)-\sum_{\substack{k\in J_n\\k+2\leq j}}F(x_{k+1},t_n)\xi^n_k,\\
    H^n_j=&-K\lambda_2(u^n_j)-\dfrac{1}{\gamma(\gamma-1)}(\rho^n_j)^{\gamma+\theta}
		-\dfrac{1}{\gamma}(\rho^n_j)^{\gamma}v^n_j-\dfrac{1}{2}(\rho^n_j)^{\theta+1}(v^n_j)^2\\&+\alpha(\rho^n_j)^{\theta+1}
		+F(x_j,t_n)-\sum_{\substack{k\in J_n\\k+2\leq j}}F(x_{k+1},t_n)\xi^n_k,
	\end{alignedat}
\end{align*}
where \begin{align*}
\xi^n_k=({m^n_{k+2}+m^n_{k}}){\varDelta x}-\frac{2\varDelta t}{3}\left\{\frac{(m^n_{k+2})^2}{\rho^n_{k+2}}+p(\rho^n_{k+2})
		-\frac{(m^n_{k})^2}{\rho^n_{k}}-p(\rho^n_{k})\right\}.
\end{align*}
Moreover, from \eqref{goal} and Theorem \ref{thm:average}, we have
\begin{align}
	-M^n_j-L^n_j+I^n_j-{\it o}({\varDelta}x)
	\leqq {z}(u^n_j),\quad
	{w}(u^n_j)
	\leqq M^n_j+L^n_j+I^n_j+{\it o}({\varDelta}x),\quad \rho^n_j\geqq0.
	\label{goal2}
\end{align}

Then, we define a sequence $\breve{u}^n_{j}=(\breve{\rho}^{n}_j,\breve{m}^{n}_j)$ as follows.

\begin{align}
&	\begin{alignedat}{2}
		\breve{\rho}^{n+1}_j=&\frac{\breve{\rho}^n_{j+1}+\breve{\rho}^n_{j-1}}2-\frac{\varDelta t}{2\varDelta x}\left\{\breve{m}^n_{j+1}-\breve{m}^n_{j-1}\right\}-\breve{R}^n_{j+1}+\breve{R}^n_{j-1},
		\\
		\breve{m}^{n+1}_j=&\frac{\breve{m}^n_{j+1}+\breve{m}^n_{j-1}}2-\frac{\varDelta t}{2\varDelta x}\left\{\frac{(\breve{m}^n_{j+1})^2}{\breve{\rho}^n_{j+1}}+p(\breve{\rho}^n_{j+1})
		-\frac{(\breve{m}^n_{j-1})^2}{\breve{\rho}^n_{j-1}}-p(\breve{\rho}^n_{j-1})\right\}\\
		&-\breve{S}^n_{j+1}+\breve{S}^n_{j-1}-{\varDelta t}\dfrac{\breve{\rho}^n_{j+1}+\breve{\rho}^n_{j-1}}{2}F(x_j,t_n),\quad(j\in J_{n+1},\;n\in {\bf N}_t)
	\end{alignedat}
	\label{recurrence2}\\
	&\breve{u}^0_{j}={u}^0_{j} \quad (j\in J_0),\nonumber
\end{align}
where $\breve{R}^n_{j},\;\breve{S}^n_{j}$ are defined by replacing ${u}^n_{j}$ with 
$\breve{u}^n_{j}$ in \eqref{recurrence1} respectively.

We notice that \eqref{recurrence2} is the recurrence relation obtained 
by removing $o(\varDelta x)$ from \eqref{recurrence1}.

Therefore, from \eqref{recurrence1}--\eqref{recurrence2}, there exists $\delta({\varDelta}x)>0$ satisfying $\delta({\varDelta}x)\rightarrow0$ as 
${\varDelta}x\rightarrow 0$, such that 
\begin{align}
	\begin{split}
		&-M^n_j-L^n_j+I^n_j-\delta({\varDelta}x)
		\leqq {z}(\breve{u}^n_j),\quad
		{w}(\breve{u}^n_j)
		\leqq M^n_j+L^n_j+I^n_j+\delta({\varDelta}x),\quad\breve{\rho}^n_j\geqq0.
	\end{split}
	\label{goal3}
\end{align}

Then, we define a map $F:{\bf R}^{4N_x+2}\rightarrow{\bf R}^{4N_x+2}$ as follows.
\begin{align}
\begin{alignedat}{2}
&\left(\left\{{z}(\breve{u}^0_{j})-I^0_j\right\}^{2N_x}_{j=0},
	\left\{{w}(\breve{u}^0_j)-I^0_j\right\}^{2N_x}_{j=0}\right)
	\\&\mapsto
	\left(\left\{{z}(\breve{u}^{2N_t}_{j})-I^{2N_t}_j\right\}^{2N_x}_{j=0},
	\left\{{w}(\breve{u}^{2N_t}_j)-I^{2N_t}_j\right\}^{2N_x}_{j=0}\right).
\end{alignedat}
	\label{map}
\end{align}
From \eqref{recurrence2}, $F$ is continuous.

To ensure that $u^0_j$ and $u^{2{\bf N}_t}_j$ are a same bounded set, we 
show the following. 
\begin{lemma}\label{lem:average3}
	\begin{align}
		\begin{split} 
			&-M-M/10+I^{2{\bf N}_t}_j\leq{z}(E_{j}^{2{\bf N}_t}(u)),\;
			w(E^{2{\bf N}_t}_j(u))\leq M+I^{2{\bf N}_t}_j+M/10.
		\end{split}
		\label{average2}
\end{align}\end{lemma}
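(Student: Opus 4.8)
The plan is to prove Lemma~\ref{lem:average3} by specializing the general averaging estimate of Theorem~\ref{thm:average} to the terminal time index $n=2N_t$ (i.e.\ $t=1$), and then showing that the cumulative error term $L^{2N_t}_j$ stays bounded by $M/10$ minus the decay gap. The starting point is \eqref{goal2} and Theorem~\ref{thm:average}, which give
\[
-M_{2N_t}-L^{2N_t}_j+I^{2N_t}_j-o(\varDelta x)\leq z(E^{2N_t}_j(u)),\qquad
w(E^{2N_t}_j(u))\leq M_{2N_t}+L^{2N_t}_j+I^{2N_t}_j+o(\varDelta x),
\]
where $M_{2N_t}=M(1-\varDelta t/4)^{2N_t}=M(1-\varDelta t/4)^{1/\varDelta t}$. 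Since $\sum 2N_t\varDelta t=1$, the quantity $(1-\varDelta t/4)^{1/\varDelta t}\to e^{-1/4}$ as $\varDelta x\to 0$, so $M_{2N_t}\leq M e^{-1/4}+o(1)\leq M - \tfrac15 M$ for $\varDelta x$ small. Thus $M_{2N_t}$ already sits comfortably below $M$, and the decay estimate \eqref{estimate1}--\eqref{estimate2} has created a reservoir of size roughly $M(1-e^{-1/4})$ into which the discontinuity error $L^{2N_t}_j$ must fit.

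The core of the argument is therefore to bound $L^{2N_t}_j$. Recalling the definition \eqref{functional discontinuity}, $L^{2N_t}_j$ is the sum of three nonnegative pieces: the accumulated entropy dissipation $\int_0^1\sum_{0\le x\le 1}(\sigma[\eta_\ast]-[q_\ast])\,dt$, the accumulated Jensen gap $\sum_{n}\int_0^1\{\eta_\ast(u^\varDelta(x,t_{n-}))-\eta_\ast(E^n(x;u))\}\,dx$, and the weighted remainder term $\bigl(1+C_\gamma\alpha\int_0^1\rho_0\,dx\bigr)\sum\frac1{2\varDelta x}\int(x_{j+1}-x)R^n_j(x)\,dx$. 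Lemma~\ref{lem:error}, in particular \eqref{lemm 4.2 3} and \eqref{lemm 4.2 4}, bounds the second and third pieces by $C_\gamma'\int_0^1\eta_\ast(u_0(x))\,dx = C_\gamma'\bar\eta_\ast$, and the chain of equalities in Lemma~\ref{lem:error} (the line involving $-\int_0^1\sum(\sigma[\eta_\ast]-[q_\ast])\,dt$) together with nonnegativity of each term shows the first piece is also bounded by $\int_0^1\eta_\ast(u_0)\,dx+o(\varDelta x)$. Hence $L^{2N_t}_j\le C\bar\eta_\ast+o(\varDelta x)$ with $C$ depending only on $\gamma$ and $\int_0^1\rho_0\,dx$, \emph{not} on $M$ (this is exactly the point of Remark~\ref{rem:bound}, \eqref{bound2}).

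It then remains to combine: pick $M$ large enough (depending on $\bar\rho,\bar\eta_\ast,\varepsilon,\gamma$) that $M_{2N_t}+L^{2N_t}_j\le (M-\tfrac15 M)+C\bar\eta_\ast+o(\varDelta x)\le M+\tfrac1{10}M$ and symmetrically $-M_{2N_t}-L^{2N_t}_j\ge -M-\tfrac1{10}M$; since $M(1-e^{-1/4})>\tfrac15 M$ grows linearly in $M$ while $C\bar\eta_\ast$ is fixed, this holds once $M$ is large. Absorbing the $o(\varDelta x)$ into the $M/10$ slack gives \eqref{average2}. I expect the main obstacle to be making the bookkeeping of $L^{2N_t}_j$ genuinely $M$-independent: one must be careful that the Gronwall-type estimate \eqref{bound energy} for the total energy — which feeds into \eqref{lemm 4.2 3} via the conservation-of-energy inequality — really does produce an $O(1)$ bound uniform in $M$ (using $\|F\|_{L^\infty}\le\kappa$ with $\kappa$ chosen small relative to $M$, as in the formal argument), and that the constant $C_\gamma$ in \eqref{CGamma} and the factor $\alpha\int_0^1\rho_0\,dx=O(M^{2(\gamma-1)/(\gamma+1)-\varepsilon})$ do not secretly reintroduce $M$-dependence that would overwhelm the linear reservoir $M(1-e^{-1/4})$; since $2(\gamma-1)/(\gamma+1)-\varepsilon<1$ by the hypothesis on $\varepsilon$, the reservoir does dominate, but this exponent comparison is the delicate quantitative step.
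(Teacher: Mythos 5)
Your proposal is correct and follows essentially the same route as the paper: invoke Theorem \ref{thm:average} at $n=2N_t$, use Lemma \ref{lem:error} to bound $L^{2N_t}_j$ uniformly, note $M_{2N_t}\to Me^{-1/4}<4M/5$, and absorb the remainder into the $M/10$ slack by taking $M$ large. Your closing observation that the weight $1+C_\gamma\alpha\int_0^1\rho_0\,dx$ is only $o(M)$ (via $\alpha=O(M^{2(\gamma-1)/(\gamma+1)-\varepsilon})$ with exponent $<1$) rather than truly $M$-independent is in fact a point the paper glosses over, and is correctly resolved.
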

\begin{proof}From Lemma \ref{lem:error}, there exists a constant $C$ independent of $M$ such that $\sum_{\substack{j\in J_n\\n\in {\bf N}_t}}J_{j}^{n}\le C$. On the other hand, 
	since $M_{2{\bf N}_t}$ in \eqref{average} satisfies $M_{2{\bf N}_t}$\linebreak$=M\left(1-\frac14{\varDelta}t\right)^{2{\bf N}_t}\rightarrow Me^{-\frac14}
	\quad({\varDelta}t\rightarrow0)$, it holds that $M_{2{\bf N}_t}<4/5M$, 
	choosing ${\varDelta}t$ small enough. Therefore, we deduce that $J_{j}^{2{\bf N}_t}+M_{2{\bf N}_t}\leq M+M/10$, choosing $M$ large enough. We can thus prove 
	this lemma.
\end{proof}
From Lemma \ref{lem:average3}, $F$ is the map from a bounded set to the same bounded set, choosing ${\varDelta}x$ small enough. Therefore, applying 
the Brouwer fixed point theorem to $F$, we have a fixed point 
\begin{align*}
\left(\left({z}(\breve{u}^0_{j})-I^0_j\right)^*,\left({w}(\breve{u}^0_j)-I^0_j\right)^*
	\right)=\left(\left({z}(\breve{u}^{2N_t}_{j})-I^{2N_t}_j\right)^*,\left({w}(\breve{u}^{2N_t}_j)-I^{2N_t}_j\right)^*
	\right). 
\end{align*}
This implies that 
\begin{align*}
\left(\left(\breve{\rho}^0_{j}\right)^*,\left(\breve{v}^0_j-I^0_j\right)^*
	\right)=\left(\left(\breve{\rho}^{2N_t}_{j}\right)^*,\left(\breve{v}^{2N_t}_j-I^{2N_t}_j\right)^*
	\right). 
\end{align*}

The remainder is to show $\left(\breve{v}^0_j\right)^*
	=\left(\breve{v}^{2N_t}_j\right)^*$ for any fixed ${\varDelta}x$. 
Assuming that there exists $j_*\in J_{2N_t}$ such that  $\left(\breve{v}^0_j\right)^*
	=\left(\breve{v}^{2N_t}_j\right)^*\quad (0\leq j<j_*)$ and $\left(\breve{v}^0_{j_*}\right)^*
	\ne\left(\breve{v}^{2N_t}_{j_*}\right)^*$, we deduce a contradiction.
\begin{align}
\left(\breve{v}^{2N_t}_{j_*}\right)^*-\left(\breve{v}^0_{j_*}\right)^*=&
{\left(\breve{v}^{2N_t}_{j_*}-I^{2N_t}_{j_*}\right)^*}+\left(I^{2N_t}_{j_*}\right)^*
-{\left(\breve{v}^{0}_{j_*}-I^{0}_{j_*}\right)^*}-\left(I^{0}_{j_*}\right)^*\nonumber\\
=&\left(I^{2N_t}_{j_*}-I^{0}_{j_*}\right)^*\nonumber\\
=&\int^{x_{j_*}}_{x_{j_*-1}}\left(\breve{\rho}^0_{j_*}\right)^*\left(\left(\breve{v}^{2N_t}_{j_*}\right)^*+\left(\breve{v}^0_{j_*}\right)^*\right)\left(\left(\breve{v}^{2N_t}_{j_*}\right)^*-\left(\breve{v}^0_{j_*}\right)^*\right)dx\nonumber\\
=&\left(\breve{\rho}^0_{j_*}\right)^*\left(\left(\breve{v}^{2N_t}_{j_*}\right)^*+\left(\breve{v}^0_{j_*}\right)^*\right)\int^{x_{j_*}}_{x_{j_*-1}}\left(\int^{x_{j_*}}_{x_{j_*-1}}
\left(I^{2N_t}_{j_*}-I^{0}_{j_*}\right)^*dx_1\right)
dx_0\nonumber
\\
=&\left\{\left(\breve{\rho}^0_{j_*}\right)^*\left(\left(\breve{v}^{2N_t}_{j_*}\right)^*+\left(\breve{v}^0_{j_*}\right)^*\right)\right\}^m\nonumber\\
&\times\int^{x_{j_*}}_{x_{j_*-1}}\left(\cdots\left(\int^{x_{j_*}}_{x_{j_*-1}}
\left(I^{2N_t}_{j_*}-I^{0}_{j_*}\right)^*
dx_m\right)\cdots\right)dx_0\nonumber\\
=&\left\{\left(\breve{\rho}^0_{j_*}\right)^*\left(\left(\breve{v}^{2N_t}_{j_*}\right)^*+\left(\breve{v}^0_{j_*}\right)^*\right){\varDelta}x\right\}^m
\left(I^{2N_t}_{j_*}-I^{0}_{j_*}\right)^*.
\label{recurrence3}
\end{align}

By choosing ${\varDelta}x$ small enough, we drive $\left\{\left(\breve{\rho}^0_{j}\right)^*\left(\left(\breve{v}^{2N_t}_{j_*}\right)^*+\left(\breve{v}^0_{j_*}\right)^*\right){\varDelta}x\right\}<1$. Since $m$ is arbitrary, 
this contradicts \eqref{recurrence3}.

Therefore, we obtain a fixed point 
\begin{align*}
\left(\left(\breve{\rho}^0_{j}\right)^*,\left(\breve{v}^0_j\right)^*
	\right)=\left(\left(\breve{\rho}^{2N_t}_{j}\right)^*,\left(\breve{v}^{2N_t}_j\right)^*
	\right). 
\end{align*}
Supplying the above as initial data, we can prove Theorem \ref{thm:periodic}.

\section{Open problem}
When we deduce \eqref{inhomo2}, we use the boundary condition $m|_{x=0}=0$. 
It should be noted that it is essential for our proof. Therefore, we cannot 
apply the present technique to the periodic boundary problem or other Dirichlet ones.

We have obtained the decay estimates \eqref{estimate1} and \eqref{estimate2} for 
large data. However, they do not still attain the convergence to an equilibrium $(\bar{\rho},0)$.

\appendix

\section{Proof of Lemma \ref{lem:average1}}
\begin{proof}
	
Due to space limitations, we denote $t_{n+1-0}$ by $T$ in this section.

	Set
	\begin{align*}
		\rho^{\varDelta}_{\dagger}(x,T)&:=\hat{\rho}(x,T)\left\{A(x,T)\right\}^{\frac{2}{\gamma-1}},\\
		m^{\varDelta}_{\dagger}(x,T)&:=\hat{m}(x,T)\left\{A(x,T)\right\}^{\frac{\gamma+1}{\gamma-1}},\\
		E_j^{n+1}(\rho^{\varDelta}_{\dagger})&:=\frac1{2{\varDelta}x}\int_{x_{j-1}}
		^{x_{j+1}}\hat{\rho}(x,T)\left\{A(x,T)\right\}^{\frac{2}{\gamma-1}}dx,
		\\
		E_j^{n+1}(m^{\varDelta}_{\dagger})&:=\frac1{2{\varDelta}x}\int_{x_{j-1}}
		^{x_{j+1}}\hat{m}(x,T)\left\{A(x,T)\right\}^{\frac{\gamma+1}{\gamma-1}}dx.
	\end{align*}
	Then, we find that  
	\begin{align}
		{w}(\hat{u}(x,T))\leq 1+{\it o}({\varDelta}x).
		\label{appendix2}
	\end{align}

	Let us  prove 
	\begin{align*}
		w(E_j^{n+1}(\rho^{\varDelta}_{\dagger}),E_j^{n+1}(m^{\varDelta}_{\dagger}))\leq \bar{A}_j(T)+o({\varDelta}x),
	\end{align*}
	where
		\begin{align*}
	\bar{A}_j(T)=\frac1{2{\varDelta}x}\int_{x_{j-1}}
		^{x_{j+1}}A(x,T)dx
	\end{align*}
	and
	\begin{align}
		w&(E_j^{n+1}(\rho^{\varDelta}_{\dagger}),E_j^{n+1}(m^{\varDelta}_{\dagger}))\nonumber\\
		&=E_j^{n+1}(m^{\varDelta}_{\dagger})/E_j^{n+1}(\rho^{\varDelta}_{\dagger})+\{E_j^{n+1}(\rho^{\varDelta}_{\dagger})\}^{\theta}
		/\theta\nonumber\\
		&=\frac{\displaystyle{\frac1{2{\varDelta}x}\int_{x_{j-1}}
				^{x_{j+1}}\hspace{-0.6ex}\hat{m}(x,T)\left\{A(x,T)\right\}^{\frac{\gamma+1}{\gamma-1}}dx}
			+{\left(\frac1{2{\varDelta}x}\int_{x_{j-1}}
				^{x_{j+1}}\hspace{-0.6ex}\hat{\rho}(x,T)
				\left\{A(x,T)\right\}^{\frac{2}{\gamma-1}}
				dx\right)^{\theta+1}}\hspace{-3ex}/{\theta}}
		{\displaystyle{\frac1{2{\varDelta}x}\int_{x_{j-1}}^{x_{j+1}}\hat{\rho}(x,T)	\left\{A(x,T)\right\}^{\frac{2}{\gamma-1}}dx}}.
\nonumber\\
		\label{lemma3.1-2}
	\end{align}

	{\it Step 1.}\\
	We find
	\begin{align*}
		E_j^{n+1}(\rho^{\varDelta}_{\dagger})&=\frac1{2{\varDelta}x}\int_{x_{j-1}}
		^{x_{j+1}}\hat{\rho}(x,T)\left\{A(x,T)\right\}^{\frac{\gamma+1}{\gamma-1}}\left\{A(x,T)\right\}^{-1}dx\\
		&=\left\{\bar{A}_j(T)\right\}^{-1}
		\frac1{2{\varDelta}x}\int_{x_{j-1}}
		^{x_{j+1}}\hat{\rho}(x,T)\left\{A(x,T)\right\}^{\frac{\gamma+1}{\gamma-1}}dx\\
		&\quad+\frac1{2{\varDelta}x}\int_{x_{j-1}}
		^{x_{j+1}}\hat{\rho}(x,T)\left\{A(x,T)\right\}^{\frac{\gamma+1}{\gamma-1}}
		\times\left(\left\{A(x,T)\right\}^{-1}-\left\{\bar{A}_j(T)\right\}^{-1}    \right)dx
		\\
		&=\left\{\bar{A}_j(T)\right\}^{-1}
		\frac1{2{\varDelta}x}\int_{x_{j-1}}
		^{x_{j+1}}\hat{\rho}(x,T)\left\{A(x,T)\right\}^{\frac{\gamma+1}{\gamma-1}}dx
		\\&\quad-\left\{\bar{A}_j(T)\right\}^{-1}\frac1{2{\varDelta}x}\int_{x_{j-1}}
		^{x_{j+1}}\hat{\rho}(x,T)\left\{A(x,T)\right\}^{\frac{2}{\gamma-1}}
		r(x,T)dx+o({\varDelta}x),
	\end{align*}
where $\displaystyle r(x,T)=A(x,T)-\bar{A}_j(T)$. Recalling \eqref{def-A}, we notice that 
	$\displaystyle r(x,T)=O({\varDelta}x)$.

	Substituting the above equation for (\ref{lemma3.1-2}), we obtain
	\begin{align}
		\lefteqn{w(E_j^{n+1}(\rho^{\varDelta}_{\dagger}),E_j^{n+1}(m^{\varDelta}_{\dagger}))}\nonumber\\
		=&\frac{\displaystyle{\frac1{2{\varDelta}x}\int_{x_{j-1}}
				^{x_{j+1}}\hspace{-0.6ex}\hat{m}(x,T)\left\{A(x,T)\right\}^{\frac{\gamma+1}{\gamma-1}}dx}
			+{\left(\frac1{2{\varDelta}x}\int_{x_{j-1}}
				^{x_{j+1}}\hspace{-0.6ex}\hat{\rho}(x,T)
				\left\{A(x,T)\right\}^{\frac{2}{\gamma-1}}
				dx\right)^{\theta+1}}\hspace{-3ex}/{\theta}}
		{\displaystyle{\left\{\bar{A}_j(T)\right\}^{-1}
				\frac1{2{\varDelta}x}\int_{x_{j-1}}
				^{x_{j+1}}\hat{\rho}(x,T)\left\{A(x,T)\right\}^{\frac{\gamma+1}{\gamma-1}}dx}
		}\nonumber\\
		&+\frac{\displaystyle{\frac1{2{\varDelta}x}\int_{x_{j-1}}
				^{x_{j+1}}\hspace{-0.6ex}\hat{m}(x,T)\left\{A(x,T)\right\}^{\frac{\gamma+1}{\gamma-1}}dx}
			+{\left(\frac1{2{\varDelta}x}\int_{x_{j-1}}
				^{x_{j+1}}\hspace{-0.6ex}\hat{\rho}(x,T)
				\left\{A(x,T)\right\}^{\frac{2}{\gamma-1}}
				dx\right)^{\theta+1}}\hspace{-3ex}/{\theta}}
		{\displaystyle{\left(
				\frac1{2{\varDelta}x}\int_{x_{j-1}}
				^{x_{j+1}}\hat{\rho}(x,T)\left\{A(x,T)\right\}^{\frac{2}{\gamma-1}}dx \right)^2
			}
		}
		\nonumber\\
		&\times \left\{\bar{A}_j(T)\right\}^{-1}\frac1{2{\varDelta}x}\int_{x_{j-1}}
		^{x_{j+1}}\hat{\rho}(x,T)\left\{A(x,T)\right\}^{\frac{2}{\gamma-1}}
		r(x,T)dx
		+o({\varDelta}x).
		\label{lemma3.1-5}
	\end{align}
	
	Set
	\begin{align}
		\lefteqn{\mu:=\frac{2}{\gamma+1}\frac1{\displaystyle{\left(
					\frac1{2{\varDelta}x}\int_{x_{j-1}}
					^{x_{j+1}}\hspace{-0.6ex}\hat{\rho}(x,T)
					\left\{A(x,T)\right\}^{\frac{2}{\gamma-1}}
					dx
					\right)^{\theta}}}}\nonumber\\
		&\times
		\frac{\displaystyle{\frac1{2{\varDelta}x}\int_{x_{j-1}}
				^{x_{j+1}}\hspace{-0.6ex}\hat{m}(x,T)\left\{A(x,T)\right\}^{\frac{\gamma+1}{\gamma-1}}dx}
			+{\left(\frac1{2{\varDelta}x}\int_{x_{j-1}}
				^{x_{j+1}}\hspace{-0.6ex}\hat{\rho}(x,T)
				\left\{A(x,T)\right\}^{\frac{2}{\gamma-1}}
				dx\right)^{\theta+1}}\hspace{-3ex}/{\theta}}
		{\displaystyle{
				\frac1{2{\varDelta}x}\int_{x_{j-1}}
				^{x_{j+1}}\hat{\rho}(x,T)\left\{A(x,T)\right\}^{\frac{2}{\gamma-1}}dx}
		}
		.
			\label{mu}
	\end{align}
	Then assume that the following holds.
	\begin{align}
		(E_j^{n+1}(\rho^{\varDelta}_{\dagger}))^{\theta+1}
		&\leq
		\frac1{2{\varDelta}x}\int_{x_{j-1}}
		^{x_{j+1}}(\hat{\rho}(x,T))^{\theta+1}
		\left\{A(x,T)\right\}^{\frac{\gamma+1}{\gamma-1}}dx\nonumber\\
		&\quad-\frac{\gamma+1}{2}\mu\left\{\bar{A}_j(T)\right\}^{-1}\left(
		\frac1{2{\varDelta}x}\int_{x_{j-1}}
		^{x_{j+1}}\hat{\rho}(x,T)\left\{A(x,T)\right\}^{\frac{2}{\gamma-1}}dx       \right)^{\theta}\nonumber\\
		&\quad\times\left(	\frac1{2{\varDelta}x}\int_{x_{j-1}}
		^{x_{j+1}}\hat{\rho}(x,T)
		\left\{A(x,T)\right\}^{\frac{2}{\gamma-1}}r(x,T)dx\right.\nonumber\\&\left.
		\quad-\frac1{2{\varDelta}x}\int_{x_{j-1}}
		^{x_{j+1}}\hat{\rho}(x,T)
		\left\{A(x,T)\right\}^{\frac{2}{\gamma-1}}dx\frac1{2{\varDelta}x}\int_{x_{j-1}}
		^{x_{j+1}}r(x,T)dx\right)
	\nonumber\\
		&\quad+o({\varDelta}x)\frac1{2{\varDelta}x}\int_{x_{j-1}}
		^{x_{j+1}}\hat{\rho}(x,T)
		\left\{A(x,T)\right\}^{\frac{2}{\gamma-1}}dx.\label{lemma3.1-13}
	\end{align}
	
	This estimate shall be proved in step 2--4.
	Then, substituting (\ref{lemma3.1-13}) for (\ref{lemma3.1-5}),
	we deduce from \eqref{appendix2} that
\begin{align*}
		w(E_j^{n+1}(\bar{\rho}),E_j^{n+1}(m^{\varDelta}_{\dagger}))
		\leq&\frac{\displaystyle{\frac1{2{\varDelta}x}\int_{x_{j-1}}
				^{x_{j+1}}\hspace{-1ex}\hat{\rho}(x,T)\left\{A(x,T)\right\}^{\frac{\gamma+1}{\gamma-1}}
				\hspace{-1ex}
				\left[\hat{v}(x,T)+\frac{\{\hat{\rho}(x,T)\}^{\theta}}{\theta}\right]dx}
		}{\displaystyle{\left\{\bar{A}_j(T)\right\}^{-1}\frac1{2{\varDelta}x}\int_{x_{j-1}}^{x_{j+1}}\hat{\rho}(x,T)\left\{A(x,T)\right\}^{\frac{\gamma+1}{\gamma-1}}dx}}\hspace{-0.5ex}
\\&+o({\varDelta}x)
		\\
		\leq&\bar{A}_j(T)+o({\varDelta}x).
	\end{align*}

	Therefore we must prove (\ref{lemma3.1-13}).
	Separating three steps, we derive this estimate.

	{\it Step 2.}\\
	From \eqref{assumption-average2},
we notice that 
	\begin{align*}
		|\mu|\leq{C}({\varDelta}x)^{-\theta\delta-\varepsilon},
	\end{align*}
	where $C$ depends only on $M$.

	In this step, we consider the first equation of (\ref{lemma3.1-5}):
	\begin{align*}{\left(\frac1{2{\varDelta}x}\int_{x_{j-1}}
			^{x_{j+1}}\hspace{-0.6ex}\hat{\rho}(x,T)
			\left\{A(x,T)\right\}^{\frac{2}{\gamma-1}}
			dx\right)^{\theta+1}}.
	\end{align*}
	Since $\theta\delta<1/2$, we first find 
	\begin{align*}
		E_j^{n+1}(\rho^{\varDelta}_{\dagger})
		=&\frac1{2{\varDelta}x}\int_{x_{j-1}}
		^{x_{j+1}}\hat{\rho}(x,T)\left\{A(x,T)\right\}^{\mu+\frac{2}{\gamma-1}}
		\left\{A(x,T)\right\}^{-\mu}dx\\
		=&\left\{\bar{A}_j(T)\right\}^{-\mu}
		\frac1{2{\varDelta}x}\int_{x_{j-1}}
		^{x_{j+1}}\hat{\rho}(x,T)\left\{A(x,T)\right\}^{\mu+\frac{2}{\gamma-1}}dx\\
		&-\mu
		\left\{\bar{A}_j(T)\right\}^{-\mu-1}\frac1{2{\varDelta}x}\int_{x_{j-1}}
		^{x_{j+1}}\hat{\rho}(x,T)
		\left\{A(x,T)\right\}^{\mu+\frac{2}{\gamma-1}}r(x,T)
		dx\\
		&+o({\varDelta}x)\frac1{2{\varDelta}x}\int_{x_{j-1}}
		^{x_{j+1}}\hat{\rho}(x,T)
		\left\{A(x,T)\right\}^{\frac{2}{\gamma-1}}dx\\
		:=&I_0-I_1+I_2.
	\end{align*}

	We next estimate $I_1$ as follows:
	\begin{align*}
		I_1&=\mu
		\left\{\bar{A}_j(T)\right\}^{-1}\frac1{2{\varDelta}x}\int_{x_{j-1}}
		^{x_{j+1}}\hat{\rho}(x,T)
		\left\{A(x,T)\right\}^{\frac{2}{\gamma-1}}r(x,T)dx
		\nonumber\\
		&\quad+o({\varDelta}x)\frac1{2{\varDelta}x}\int_{x_{j-1}}
		^{x_{j+1}}\hat{\rho}(x,T)\left\{A(x,T)\right\}^{\frac{2}{\gamma-1}}
		dx.
	\end{align*}
	Therefore, we have
	\begin{align*}
		\begin{split}
			E_j^{n+1}(\rho^{\varDelta}_{\dagger})
			=&\frac1{2{\varDelta}x}\int_{x_{j-1}}
			^{x_{j+1}}\hat{\rho}(x,T)\left\{A(x,T)\right\}^{\mu+\frac{2}{\gamma-1}}
			\left\{A(x,T)\right\}^{-\mu}dx\\
			=&\left\{\bar{A}_j(T)\right\}^{-\mu}
			\frac1{2{\varDelta}x}\int_{x_{j-1}}
			^{x_{j+1}}\hat{\rho}(x,T)\left\{A(x,T)\right\}^{\mu+\frac{2}{\gamma-1}}dx\\
			&-\mu
			\left\{\bar{A}_j(T)\right\}^{-1}\frac1{2{\varDelta}x}\int_{x_{j-1}}
			^{x_{j+1}}\hat{\rho}(x,T)
			\left\{A(x,T)\right\}^{\frac{2}{\gamma-1}}r(x,T)dx\\
			&+o({\varDelta}x)\frac1{2{\varDelta}x}\int_{x_{j-1}}
			^{x_{j+1}}\hat{\rho}(x,T)
			\left\{A(x,T)\right\}^{\frac{2}{\gamma-1}}dx.	
		\end{split}
	\end{align*}
	
	From the above, 
	we deduce that 
	\begin{align}
		(E_j^{n+1}(\rho^{\varDelta}_{\dagger}))^{\theta+1}
		&=\left(\left\{\bar{A}_j(T)\right\}^{-\mu}
		\frac1{2{\varDelta}x}\int_{x_{j-1}}
		^{x_{j+1}}\hat{\rho}(x,T)\left\{A(x,T)\right\}^{\mu+\frac{2}{\gamma-1}}dx\right.\nonumber\\
		&\quad\left.
		-\mu
		\left\{\bar{A}_j(T)\right\}^{-1}\frac1{2{\varDelta}x}\int_{x_{j-1}}
		^{x_{j+1}}\hat{\rho}(x,T)
		\left\{A(x,T)\right\}^{\frac{2}{\gamma-1}}r(x,T)dx
		\right)^{\theta+1}\nonumber\\
		&\quad+o({\varDelta}x)\frac1{2{\varDelta}x}\int_{x_{j-1}}
		^{x_{j+1}}\hat{\rho}(x,T)
		\left\{A(x,T)\right\}^{\frac{2}{\gamma-1}}dx
		\nonumber\\
		&=\left(
		\left\{\bar{A}_j(T)\right\}^{-\mu}
		\frac1{2{\varDelta}x}\int_{x_{j-1}}
		^{x_{j+1}}\hat{\rho}(x,T)\left\{A(x,T)\right\}^{\mu+\frac{2}{\gamma-1}}dx
		\right)
		^{\theta+1}\nonumber\\
		&\quad+(\theta+1)\left(
		\left\{\bar{A}_j(T)\right\}^{-\mu}
		\frac1{2{\varDelta}x}\int_{x_{j-1}}
		^{x_{j+1}}\hat{\rho}(x,T)\left\{A(x,T)\right\}^{\mu+\frac{2}{\gamma-1}}dx       \right)^{\theta}\nonumber\\
		&\quad\times-\mu
		\left\{\bar{A}_j(T)\right\}^{-1}\frac1{2{\varDelta}x}\int_{x_{j-1}}
		^{x_{j+1}}\hat{\rho}(x,T)
		\left\{A(x,T)\right\}^{\frac{2}{\gamma-1}}r(x,T)dx
		\nonumber\\
		&\quad+o({\varDelta}x)\frac1{2{\varDelta}x}\int_{x_{j-1}}
		^{x_{j+1}}\hat{\rho}(x,T)
		\left\{A(x,T)\right\}^{\frac{2}{\gamma-1}}dx
		\nonumber\\
		&=\left(
		\left\{\bar{A}_j(T)\right\}^{-\mu}
		\frac1{2{\varDelta}x}\int_{x_{j-1}}
		^{x_{j+1}}\hat{\rho}(x,T)\left\{A(x,T)\right\}^{\mu+\frac{2}{\gamma-1}}dx
		\right)
		^{\theta+1}\nonumber\\
		&\quad-\frac{\gamma+1}{2}\mu
		\left\{\bar{A}_j(T)\right\}^{-1}\left(
			\frac1{2{\varDelta}x}\int_{x_{j-1}}
		^{x_{j+1}}\hat{\rho}(x,T)\left\{A(x,T)\right\}^{\frac{2}{\gamma-1}}dx       \right)^{\theta}\nonumber\\
		&\quad\times\frac1{2{\varDelta}x}\int_{x_{j-1}}
		^{x_{j+1}}\hat{\rho}(x,T)
		\left\{A(x,T)\right\}^{\frac{2}{\gamma-1}}r(x,T)dx
		\nonumber\\
		&\quad+o({\varDelta}x)\frac1{2{\varDelta}x}\int_{x_{j-1}}
		^{x_{j+1}}\hat{\rho}(x,T)
		\left\{A(x,T)\right\}^{\frac{2}{\gamma-1}}dx
		.
		\label{lemma3.1-9}
	\end{align}

	{\it Step 3}\\
	Applying the Jensen inequality to the first term of the right-hand of (\ref{lemma3.1-9}), we have
	\begin{align}
		&\left(
		\left\{\bar{A}_j(T)\right\}^{-\mu}
		\frac1{2{\varDelta}x}\int_{x_{j-1}}
		^{x_{j+1}}\hat{\rho}(x,T)\left\{A(x,T)\right\}^{\mu+\frac{2}{\gamma-1}}dx
		\right)
		^{\theta+1}\nonumber
		\\
		&=\left(\frac{\displaystyle{	\left\{\bar{A}_j(T)\right\}^{-\mu}
				\frac1{2{\varDelta}x}\int_{x_{j-1}}
				^{x_{j+1}}\hat{\rho}(x,T)\left\{A(x,T)\right\}^
				{\mu+\frac{2}{\gamma-1}}
			dx}}
		{\displaystyle{\frac1{2{\varDelta}x}\int_{x_{j-1}}
				^{x_{j+1}}\left\{A(x,T)\right\}^{\frac{\gamma+1}{\gamma-1}\mu}dx}}\right)^{\theta+1}
		\nonumber\\
		&\quad\times\left(\frac1{2{\varDelta}x}\int_{x_{j-1}}
		^{x_{j+1}}\left\{A(x,T)\right\}^{\frac{\gamma+1}{\gamma-1}\mu}dx\right)^{\theta+1}
		\nonumber\\
		&=\left(\frac{\displaystyle{	\left\{\bar{A}_j(T)\right\}^{-\mu}
				\frac1{2{\varDelta}x}\int_{x_{j-1}}
				^{x_{j+1}}\hat{\rho}(x,T)\left\{A(x,T)\right\}^
				{\mu+\frac{2}{\gamma-1}}
				dx}}
		{\displaystyle{\frac1{2{\varDelta}x}\int_{x_{j-1}}
				^{x_{j+1}}\left\{A(x,T)\right\}^{\frac{\gamma+1}{\gamma-1}\mu}dx}}\right)^{\theta+1}
		\nonumber\\
		&\quad\times\left(\frac1{2{\varDelta}x}\int_{x_{j-1}}
		^{x_{j+1}}\left\{A(x,T)\right\}^{\frac{\gamma+1}{\gamma-1}\mu}dx\right)
	\nonumber\\
			&\quad\times\left(\left\{\bar{A}_j(T)\right\}^{\frac{\gamma+1}{2}\mu}+
			\frac{\gamma+1}{2}\mu\left\{\bar{A}_j(T)\right\}^{\frac{\gamma+1}{2}\mu-1}\frac1{2{\varDelta}x}\int_{x_{j-1}}
			^{x_{j+1}}r(x,T)dx+o({\varDelta}x)\right)
		\nonumber\\
&=\left(\frac{\displaystyle{	\left\{\bar{A}_j(T)\right\}^{-\mu}
				\frac1{2{\varDelta}x}\int_{x_{j-1}}
				^{x_{j+1}}\hat{\rho}(x,T)\left\{A(x,T)\right\}^
				{\mu-\frac{\gamma+1}{\gamma-1}\mu+\frac{2}{\gamma-1}}
				\left\{A(x,T)\right\}^{\frac{\gamma+1}{\gamma-1}\mu}dx}}
		{\displaystyle{\frac1{2{\varDelta}x}\int_{x_{j-1}}
				^{x_{j+1}}\left\{A(x,T)\right\}^{\frac{\gamma+1}{\gamma-1}\mu}dx}}\right)^{\theta+1}
		\nonumber\\
		&\quad\times\left(\frac1{2{\varDelta}x}\int_{x_{j-1}}
		^{x_{j+1}}\left\{A(x,T)\right\}^{\frac{\gamma+1}{\gamma-1}\mu}dx\right)\left\{\bar{A}_j(T)\right\}^{\frac{\gamma+1}{2}\mu}
		\nonumber\\
		&\quad
		+o({\varDelta}x){\displaystyle{	
				\frac1{2{\varDelta}x}\int_{x_{j-1}}
				^{x_{j+1}}\hat{\rho}(x,T)\left\{A(x,T)\right\}^
				{\frac{2}{\gamma-1}}
				dx}}
		\nonumber\\
		&\leq
		\frac1{2{\varDelta}x}\int_{x_{j-1}}
		^{x_{j+1}}(\hat{\rho}(x,T))^{\theta+1}
		\left\{A(x,T)\right\}^{\frac{\gamma+1}{\gamma-1}}dx\nonumber\\&\quad
		+o({\varDelta}x){\displaystyle{	
				\frac1{2{\varDelta}x}\int_{x_{j-1}}
				^{x_{j+1}}\hat{\rho}(x,T)\left\{A(x,T)\right\}^
				{\frac{2}{\gamma-1}}
				dx}}.
		\label{lemma3.1-10}
	\end{align}

	From (\ref{lemma3.1-9}) and (\ref{lemma3.1-10}), we obtain 
	(\ref{lemma3.1-13}) and complete the proof of lemma \ref{lem:average1}.
\end{proof}

\vspace*{-2.0ex}
\section{Construction and $L^{\infty}$ estimates of approximate solutions near the vacuum in Case 1}

In this step, we consider the case where $\rho_{\rm M}\leqq({\varDelta}x)^{\beta}$,
which means that $u_{\rm M}$ is near the vacuum. Since we cannot use the implicit 
function theorem, we must construct $u^{\varDelta}(x,t)$ in a different way.


\vspace*{5pt}
{\bf Case 1} A 1-rarefaction wave and a 2-shock arise.

In this case, we notice that $\rho_{\rm R}\leqq ({\varDelta}x)^{\beta},\;
z_{\rm R}\geqq -M_n-L^n_j+I^n_j$ and $w_{\rm R}\leqq  M_n+L^n_j+I^n_j$.
\vspace*{5pt}

\vspace*{5pt}
{\bf Case 1.1}
$\rho_{\rm L}>({\varDelta}x)^{\beta}$

We denote $u^{(1)}_{\rm L}$ a state satisfying $ w(u_{\rm L}^{(1)})=w(u_{\rm L})$ and 
$\rho^{(1)}_{\rm L}=({\varDelta}x)^{\beta}$. 
Let $u^{(2)}_{\rm L}$ be a state connected to ${u}^{\varDelta}_1(x_{j-1},t_{n+1-})$ on the right by 
$R_1^{\varDelta}(u_{\rm L},z^{(1)}_{\rm L},x,t_{n+1-})$. We set 
\begin{align*}
	(z^{(3)}_{\rm L},w^{(3)}_{\rm L})=
	\begin{cases}
		(z^{(2)}_{\rm L},w^{(2)}_{\rm L}),\quad\text{if $z^{(2)}_{\rm L}\geq D^n_j$},\\
		(D^n_j,w^{(2)}_{\rm L}),\quad\text{if $z^{(2)}_{\rm L}< D^n_j$},
	\end{cases}
\end{align*}
where 
\begin{align*}
D^n_j=&-M_{n+1}-L^{n}_j+\int^{x_{j-1}}_{x_0}\zeta({u}^{\varDelta}_{n,0}(x))dx+V(u_{\rm L}){\varDelta}t+
\int^{x_{j+1}}_{x_{j-1}}Kdx\\
&+
\int^{x_{j}+\lambda_1(u^{(2)}_{\rm L}){\varDelta}t}_{x_{j-1}}\eta(R_1^{\varDelta}(u_{\rm L},z^{(1)}_{\rm L},x,t_{n+1-}))dx.
\end{align*}



Then, we define ${u}^{\varDelta}(x,t)$ as follows.
\begin{align*}{u}^{\varDelta}(x,t)=
\begin{cases}
R_1^{\varDelta}(u_{\rm L},z^{(1)}_{\rm L},x,t),\hspace*{2ex}\text{if $x_{j-1}
	\leqq{x}\leqq x_{j}+\lambda_1(u^{(2)}_{\rm L})(t-{t}_{n})$}\\
\hspace*{19ex}\text{ and ${t}_{n}\leqq{t}<{t}_{n+1}$,}\vspace*{1ex}\\
u_{\rm Rw}(x,t),\hspace*{2ex}\text{if $ x_{j}+\lambda_1(u^{(2)}_{\rm L})(t-{t}_{n})$$<x
	\leqq x_{j}+\lambda_2(u_{\rm M},u_{\rm R})(t-{t}_{n})$}\\\hspace*{11ex}\text{ and ${t}_{n}\leqq{t}<{t}_{n+1}$,}\vspace*{1ex}\\
{u}^{\varDelta}_{\rm R}(x,t) \text{ defined in \eqref{appr-R}},\hspace*{2ex}\text{if $x_{j}+\lambda_2(u_{\rm M},u_{\rm R})(t-{t}_{n})$$<x
	\leqq x_{j+1}$ }\\\hspace*{28ex}\text{and ${t}_{n}\leqq{t}<{t}_{n+1}$,}
\end{cases}
\end{align*}
where (a) $\lambda_2(u_{\rm M},u_{\rm R})$ is a propagation speed of 2-shock wave; 
(b) $u_{\rm Rw}(x,t)$ is a\linebreak rarefaction wave connecting $u^{(3)}_{\rm L}$ and $u^{(4)}_{\rm L}$; (c) $u^{(4)}_{\rm L}$ is defined by $z^{(4)}_{\rm L}=\max\{z^{(3)}_{\rm L},z_{\rm M}\},$\linebreak$w^{(4)}_{\rm L}=w^{(3)}_{\rm L}$.

\begin{figure}[htbp]
	\begin{center}
		\vspace{-1ex}
		\hspace{2ex}
		\includegraphics[scale=0.3]{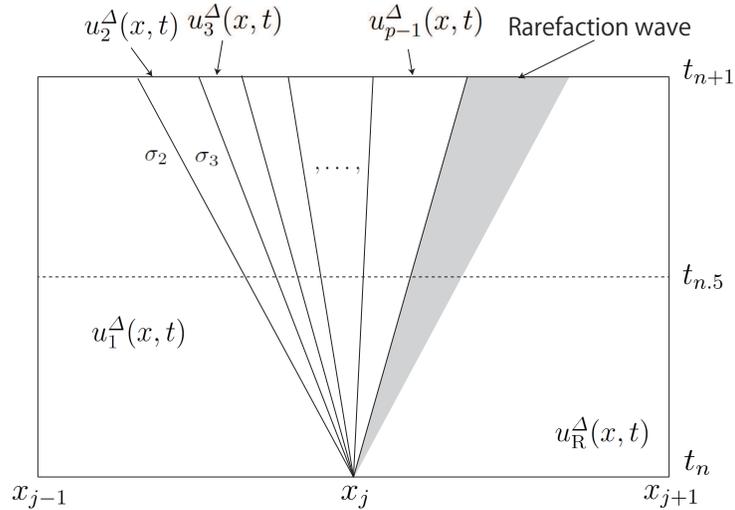}
	\end{center}\vspace*{-2ex}
	\caption{{\bf Case 1.1}: The approximate solution ${u}^{\varDelta}$ in the cell.}
	\label{Fig:case1.1}
\end{figure}

\vspace*{2ex}
{\bf Case 1.2} $\rho_{\rm L}\leqq({\varDelta}x)^{\beta}$

We set $(z^{(5)}_{\rm L},w^{(5)}_{\rm L})=(\max\{z_{\rm L},D^n_j\},
	\min\{w_{\rm L},U^n_j\})$,
where 
\begin{align*}
	U^n_j=&M_{n+1}+L^{n}_j+\int^{x_{j-1}}_{x_0}\zeta({u}^{\varDelta}_{n,0}(x))dx+V(u_{\rm L}){\varDelta}t.
\end{align*}

Then, we define ${u}^{\varDelta}(x,t)$ as follows.
\begin{align*}{u}^{\varDelta}(x,t)=
	\begin{cases}
		{u}^{\varDelta}_1(x,t)\text{ defined in \eqref{appro1}},\hspace*{2ex}\text{if $x_{j-1}
			\leqq{x}\leqq x_{j}+\lambda_1(u_{\rm L})(t-{t}_{n})$}\\
		\hspace*{28ex}\text{ and ${t}_{n}\leqq{t}<{t}_{n+1}$,}\vspace*{1ex}\\
		u_{\rm Rw}(x,t),\hspace*{2ex}\text{if $ x_{j}+\lambda_1(u_{\rm L})(t-{t}_{n})$$<x
			\leqq x_{j}+\lambda_2(u_{\rm M},u_{\rm R})(t-{t}_{n})$}\\\hspace*{11ex}\text{ and ${t}_{n}\leqq{t}<{t}_{n+1}$,}\vspace*{1ex}\\
		{u}^{\varDelta}_{\rm R}(x,t) \text{ defined in \eqref{appr-R}},\hspace*{2ex}\text{if $x_{j}+\lambda_2(u_{\rm M},u_{\rm R})(t-{t}_{n})$$<x
			\leqq x_{j+1}$ }\\\hspace*{28ex}\text{and ${t}_{n}\leqq{t}<{t}_{n+1}$,}
	\end{cases}
\end{align*}
where (a) $u_{\rm Rw}(x,t)$ is a rarefaction wave connecting $u^{(5)}_{\rm L}$ and $u^{(6)}_{\rm L}$; (b) $u^{(6)}_{\rm L}$ is defined by $z^{(6)}_{\rm L}=\max\{z^{(5)}_{\rm L},z_{\rm M}\},\;
w^{(6)}_{\rm L}=w^{(5)}_{\rm L}$.

\begin{remark} 
We notice that	
${\rho}^{\varDelta}(x,t)=O(({\varDelta}x)^{\beta})$ in (1.ii), (1.iii) and (2.i)--(2.iii). 
Therefore, the followings hold in these areas.

Although (1.ii) and (2.ii) are solutions of homogeneous isentropic gas dynamics (i.e., 
$g(x,t,u))=0$), they is also a solution of \eqref{IP} approximately
\begin{align*}
	(u^{\varDelta})_t+f(u^{\varDelta})_x-g(x,u^{\varDelta})=-g(x,u^{\varDelta})=O(({\varDelta}x)^{\beta}).
\end{align*}

In addition, discontinuities separating (1.i)--(1.iii) and (2.i)--(2.iii) satisfy \cite[Lemma 5.3]{T1}. 

\end{remark}

\subsection{$L^{\infty}$ estimates of approximate solutions}

We consider {\bf Case 1.1} in particular. It suffices to treat with $u_{\rm Rw}(x,t)$ 
in the region where $ x_{j}+\lambda_1(u^{(2)}_{\rm L})(t-{t}_{n})<x
\leqq x_{j}+\lambda_2(u_{\rm M},u_{\rm R})(t-{t}_{n})$ and ${t}_{n}\leqq{t}<{t}_{n+1}$. 
The other cases are similar to Theorem \ref{thm:bound}.

In this case, since ${\rho}^{\varDelta}(x,t)=O(({\varDelta}x)^{\beta})$, we have 
\begin{align}
\eta_{\ast}({u}^{\varDelta}(x,t))=O(({\varDelta}x)^{\beta}).
\label{vacuum-eta}
\end{align}
Moreover, we notice that \begin{align*}{w}^{\varDelta}(x,t_{n+1-})=w^{(2)}_{\rm L}=w(R_1^{\varDelta}(u_{\rm L},z^{(1)}_{\rm L},x_{j}+\lambda_1(u^{(2)}_{\rm L}){\varDelta}t,t_{n+1-})).\end{align*}
Applying Theorem \ref{thm:bound} to $R_1^{\varDelta}(u_{\rm L},z^{(1)}_{\rm L},x,t_{n+1-})$, we drive 
\begin{align*}
	\begin{alignedat}{2}
		&\displaystyle {w}^{\varDelta}(x,t_{n+1-})
		&\leq& M_{n+1}+L^n_j+\int^{x_{j}+\lambda_1(u^{(2)}_{\rm L}){\varDelta}t}_{x_0}\zeta({u}^{\varDelta}(y,t_{n+1-}))dy\\&&&+\int^{t_{n+1}}_{t_n}\sum_{y<x_{j-1}}(\sigma[\eta_{\ast}]-[q_{\ast}])dt+{\it o}({\varDelta}x)\\
		&&\leq& M_{n+1}+L^n_j+\int^{x}_{x_0}\zeta({u}^{\varDelta}(y,t_{n+1-}))dy+\int^{t_{n+1}}_{t_n}\sum_{y<x_{j-1}}(\sigma[\eta_{\ast}]-[q_{\ast}])dt\\&&&+{\it o}({\varDelta}x),
	\end{alignedat}
\end{align*}
which means $\eqref{goal}_2$.

Next, we notice that ${z}^{\varDelta}(x,t)\geq D^n_j$. In view of \eqref{mass-conservation} and \eqref{vacuum-eta}, we obtain $\eqref{goal}_1$.


\section*{Acknowledgements.}
N. Tsuge's research is partially supported by Grant-in-Aid for Scientific Research (C) 17K05315, Japan.

\end{document}